
\documentclass{scrartcl}

\KOMAoptions{headings=normal}
\usepackage[usenames, dvipsnames]{xcolor}

\usepackage{tikz}
\usepackage{tikz-cd}
\usepackage{braket}
\usepackage{graphicx}
\usepackage{amsmath}
\usepackage{amsthm}
\usepackage{csquotes}

\usepackage[T1]{fontenc}
\usepackage[bitstream-charter]{mathdesign}
\usepackage[colorlinks=true, linkcolor=NavyBlue, citecolor = OliveGreen]{hyperref}



\theoremstyle{definition}
\newtheorem{sub}{}[section]
\newtheorem{defn}[sub]{Definition}

\newtheorem{Remark}[sub]{Remark}
\newtheorem{Example}[sub]{Example}

\newtheorem{constr}{Construction}
\newtheorem{notation}[sub]{Notation}

\theoremstyle{plain}
\newtheorem{thm}[sub]{Theorem}
\newtheorem{lem}[sub]{Lemma}

\newtheorem{coro}[sub]{Corollary}
\newtheorem{prop}[sub]{Proposition}

\newcommand{\mcyl}{(\Delta^1)^\sharp}
\newcommand{\1}{\{1\}}
\newcommand{\ian}{$I$-anodyne }
\newcommand{\psh}{\mathrm{PSh}}
\newcommand{\ifib}{$I$-fibrant }
\newcommand{\Hom}{\mathrm{Hom}}

\title{Covariant \& Contravariant Homotopy Theories}
\author{Hoang Kim Nguyen}
\date{}

\begin{document}

\begin{titlepage}
\maketitle
\begin{abstract}
Given a locally presentable category together with a suitable functorial cylinder object, we construct model structures which are sensitive to the `direction' of the cylinder. We show that the Covariant and Contravariant model structures on simplicial sets as well as the coCartesian and Cartesian model structures on marked simplicial sets are examples of our formalism. In this setting, notions of final and initial maps and smooth and proper maps arise very naturally and we will identify these maps in the examples.
\end{abstract}

\tableofcontents

\end{titlepage}

\section{Introduction}	

\begin{sub}
Suppose we have a topos $E$ together with a suitable functorial cylinder object $I\colon E \to E$. In the articles \cite{cisinskitop} and \cite{cisinski}, Cisinski shows that one can generate from this datum a model structure on $E$ in which the cofibrations are precisely the monomorphisms and the fibrant objects are characterized by a right lifting property against a set of morphisms constructed out of the cylinder object, the $I$-anodyne extensions. Prominent examples of this construction are the Kan-Quillen model structure and the Joyal model structure on simplicial sets. Cisinski's theory has been generalized by Olschok to the setting of locally presentable categories \cite{olschok} with a suitable weak factorization system playing the role of the weak factorization system given by monomorphisms and trivial fibrations in a topos.
\end{sub}

\begin{sub}
The goal of this article is to modify Cisinski's theory so that the `direction' of the cylinder (and thus the direction of homotopy) matters. This will naturally lead to two in general \emph{distinct} model structures arising from a functorial cylinder object, see Theorem \ref{righthm} and Theorem \ref{lefthm} and we suggestively call them Covariant and Contravariant model structures, depending on the direction that we chose. The terminology is motivated by the two main examples that we consider in this article. Given a simplicial set $A$, we show that the two model structures on the slice category $\mathbf{sSet}/A$ that we construct here, Theorem \ref{covariant}, correspond to Joyal's Covariant and Contravariant model structures, see \cite{joyalquadern} and also \cite{lurie} and \cite{cisinskibook} for different proofs of the existence of these model structures. Similarly, we show that the two model structures on the slice category $\mathbf{sSet}^+/A^\sharp$, Theorem \ref{markedcov}, correspond to Lurie's coCartesian and Cartesian model structures \cite{lurie}. In particular, both examples arise as instances of the same general formalism. Thus, the homotopy theories we construct here are a natural setting in which we can speak about homotopy theories behaving in a `covariant' or `contravariant' fashion.
\end{sub}

\begin{sub}
In this setting, the notions of \emph{final} and \emph{initial} maps arise very naturally. Given a suitable cylinder on a locally presentable category $C$ and an object $A\in C$, we can consider Covariant and Contravariant model structures on the slice categories $C/A$. A final (resp. initial) map is defined as a map in $C$, which is an equivalence in the Contravariant (resp. Covariant) model structure on $C/A$ for \emph{all} objects $A$. We will see that the final and initial maps are completely determined by the cylinder, see Proposition \ref{leftcancel} for a precise statement. This will also lead to the notion of \emph{smooth} and \emph{proper} maps and we will see, in a quite elementary way, that left (resp. right) fibrations and coCartesian (resp. Cartesian) fibrations are proper (resp. smooth) with respect to the Contravariant model structures on simplicial sets, see Theorem \ref{properleft} and Corollary \ref{markedproperleft}.
\end{sub}

\begin{sub}
Finally, we want to mention that the proofs of the next two sections in this article are mostly due to Cisinski, although in a less general setting. Nevertheless we gave full proofs, just to verify that his arguments indeed carry over. Our main source of inspiration is \cite[Section 2.4]{cisinskibook}.
\end{sub}

\medskip

\emph{Acknowledgments:} This article is part of the author's PhD Thesis written under the supervision of George Raptis and Ulrich Bunke at the University of Regensburg. The author thanks both of them for their guidance and support. Furthermore, the author thanks Denis-Charles Cisinski for many helpful discussions on this topic. Finally, the author gratefully acknowledges support from the Deutsche Forschungsgesellschaft through the \emph{S{}FB 1085 -- Higher Invariants}.

\section{Covariant \& Contravariant model structures}\label{abstrcontr}

\begin{sub}
We fix once and for all a locally presentable category $C$ together with a cofibrantly generated weak factorization system $(\mathcal L, \mathcal R)$. Moreover, we assume that for each object $X\in C$ the canonical morphism $\emptyset \to X$ is in the left class $\mathcal L$.
\end{sub}

\begin{defn}
Let $X\in C$ be an object. A \emph{cylinder on X} is a commutative diagram
\[
\begin{tikzcd}
X\drar[swap]{\partial_0} \arrow[bend left]{drr}{id_X} & & \\
 & IX \rar{\sigma} & X\\
X \urar{\partial_1} \arrow[bend right, swap]{urr}{id_X} & &
\end{tikzcd}
\]
where the induced map $\partial_0 \sqcup \partial_1 \colon X\sqcup X \to IX$ is in the left class $\mathcal L$.
\end{defn}     

\begin{sub}
Consider the endomorphism category $End(C)$. This is a monoidal category with monoidal product given by composition. It acts on the left on $C$ by
\begin{align}
End(C)\times C &\to C\\
(F,X)&\mapsto F\otimes X = F(X).
\end{align}
In particular, for any natural transformation $\eta\colon F\Rightarrow G$ and any morphism $f\colon X\to Y \in C$ we obtain a morphism
\[
\eta \otimes f \colon F\otimes X \to G\otimes Y.
\]
\end{sub}

\begin{defn}
A \emph{functorial cylinder object} on the category $C$ is an endofunctor $I\colon C \to C$ together with natural transformations
\begin{itemize}
	\item $\partial_0 \sqcup \partial_1\colon id_{C}\sqcup id_{C} \Rightarrow I$
	\item $\sigma \colon I \Rightarrow id_{C}$
\end{itemize}
such that for each $X \in C$, evaluation at $X$ defines a cylinder on $X$. 
\end{defn}

\begin{notation}\label{boxproduct}
Suppose we have a functorial cylinder $(I,\partial_0,\partial_1,\sigma)$ on $C$. We denote $\partial I := id_C \sqcup id_C$. We thus have natural transformations
\begin{itemize}
 	\item $\partial_0 \sqcup \partial_1 \colon \partial I \Rightarrow I$
 	\item $\partial_i \otimes id \colon \{i\}\otimes id \cong id \Rightarrow I$ for $i=0,1$.
 \end{itemize} 
The cylinder induces three operations on the morphisms of $C$. Given a morphism $i\colon K \to L \in C$ we obtain a commutative square
\[
\begin{tikzcd}
\partial I \otimes K \rar \dar & I\otimes K\dar\\ 
\partial I \otimes L \rar & I\otimes L.
\end{tikzcd}
\]
We denote the induced map from the pushout
\[
\partial I \boxtimes i \colon \partial I \otimes L \sqcup_{\partial I \otimes K} I \otimes K \to I \otimes L.
\]
Similarly for $i=0,1$ we have a commutative square
\[ 
\begin{tikzcd}
\{i\}\otimes K \rar \dar & I\otimes K\dar\\ 
\{i\}\otimes L \rar & I\otimes L
\end{tikzcd}
\]
and we denote the induced map from the pushout
\[
\partial_i \boxtimes i \colon \{i\} \otimes L \sqcup_{\{i\}\otimes K} I \otimes K \to I \otimes L.
\]
\end{notation}

\begin{sub}
Given a functorial cylinder, we impose additional compatibility conditions with respect to the weak factorization system $(\mathcal L, \mathcal R)$. 
\end{sub}

\begin{defn}
A functorial cylinder is called \emph{exact} with respect to $(\mathcal L, \mathcal R)$ if the following hold:
\begin{itemize}
	\item The functor $I$ commutes with small colimits.
	\item For any morphism $j\colon K \to L \in \mathcal L$ the morphism $\partial I \boxtimes j$ is in $\mathcal L$.
	\item For any morphism $j\colon K\to L \in \mathcal L$ the morphism $\partial_j \boxtimes j$ is in $\mathcal L$ for $i=0,1$.
\end{itemize}
\end{defn}

\begin{Example}\label{separatingsegment}
Let $A$ be a small category and consider its category of presheaves $\psh(A)$. This admits a weak factorization system where the left class is given by the class of monomorphisms. We will call the right class the class of \emph{trivial fibrations} and denote the weak factorization system by (Mono, Triv). Let $I$ be a presheaf together with two maps from the terminal presheaf $\partial_i \colon \ast \to I$, where $i=0,1$, such that
\[
\begin{tikzcd}
\emptyset \rar \dar & \ast \dar{\partial_0}\\
\ast \rar[swap]{\partial_1} & I
\end{tikzcd}
\]
is cartesian. Then the endofunctor
\[
I \times (\cdot)\colon \psh(A) \to \psh(A)
\]
defines an exact cylinder with respect to the weak factorization system (Mono, Triv). Indeed, for a presheaf $X$ the structure maps are given by $\partial_i \times id_X\colon X\to I\times X$ and $\sigma \colon I\times X \to X$ is given by the projection to $X$. For any monomorphism of presheaves $j\colon K \to L$, we have a cartesian square
\[
\begin{tikzcd}
\partial I \times K \cong K \sqcup K \rar \dar & I\times K\dar \\
\partial I \times L \cong L \sqcup L \rar  & I\times L
\end{tikzcd}
\]
since colimits are universal. It follows that the map $\partial I \boxtimes j$ is a monomorphism and a similar argument shows that $\partial_i \boxtimes j$ is a monomorphism for $j=0,1$. Since the category of presheaves is cartesian closed, the functor $I\times (\cdot)$ commutes with colimits hence is exact with respect to (Mono, Triv).
\end{Example}

\begin{defn}
A class of morphisms $An^r(I)\subseteq \mathcal L$ is called a class of \emph{right I-anodyne extensions} if the following axioms are satisfied.
\begin{itemize}
	\item There exists a (small) set of morphisms $\Lambda\subseteq \mathcal L$ such that we have $An^r(I)= l(r(\Lambda))$.
	\item For any $i\colon K\to L\in \mathcal L$, the induced map $\partial_1\boxtimes i$ is in $An^r(I)$.
	\item For any $i\colon K\to L\in An^r(I)$, the map $\partial I \boxtimes i$ is also in $An^r(I)$
\end{itemize}
A \emph{right homotopical structure} on $C$ is the datum of an exact cylinder $(I,\partial_0,\partial_1, \sigma)$ together with a choice of right \ian extensions $An^r(I)$. A \emph{right I-fibration} is a morphism of $C$ having the right lifting property with respect to the class of right $I$-anodyne extensions. An object is \emph{right I-fibrant} if its canonical map to the terminal object is a right $I$-fibration.
\end{defn}

\begin{sub}
Dually, we may define the following.
\end{sub}

\begin{defn}
A class of morphisms $An^l(I)\subseteq \mathcal L$ is called a class of \emph{left I-anodyne extensions} if the following axioms are satisfied.
\begin{itemize}
	\item There exists a (small) set of morphisms $\Lambda\subseteq \mathcal L$ such that we have $An^r(I)= l(r(\Lambda))$.
	\item For any $i\colon K\to L\in \mathcal L$, the induced map $\partial_0\boxtimes i$ is in $An^r(I)$.
	\item For any $i\colon K\to L\in An^r(I)$, the map $\partial I \boxtimes i$ is also in $An^r(I)$
\end{itemize}
A \emph{left homotopical structure} on $C$ is the datum of an exact cylinder $(I,\partial_0,\partial_1, \sigma)$ together with a choice of left $I$-anodyne extensions $An^l(I)$. A \emph{left I-fibration} is a morphism of $C$ having the right lifting property with respect to the class of left $I$-anodyne extensions. An object is \emph{left I-fibrant} if its canonical map to the terminal object is a left $I$-fibration.
\end{defn}

\begin{Remark}\label{plainanodyne}
Our definition of \emph{right} (and \emph{left}) $I$-anodyne extension differs from Cisinski's notion of (plain) \emph{$I$-anodyne extensions} in the following way. In Cisinski's axioms it is required that for any morphism $i\colon K\to L\in \mathcal L$ \emph{both} morphisms
\begin{itemize}
	\item $\partial_0 \boxtimes i$ and
	\item $\partial_1 \boxtimes i$
\end{itemize}
are $I$-anodyne extensions, while we only require the second one for our notion of right $I$-anodyne extensions. This gives a \emph{direction} for right $I$-anodyne extensions. For example, for any object $K \in  C$ the morphism $\1 \otimes K \to I\otimes K$ is right $I$-anodyne while the morphism $\{0\} \otimes K \to I\otimes K$ is not.
\end{Remark}

\begin{sub}
We will see that a class of right (or left) $I$-anodyne extensions always exists. For example we may take the class $\mathcal L$ to be a class of right $I$-anodyne extensions. At the end of this section, we will consider right $I$-anodyne extensions arising from an \emph{elementary homotopical datum}. But first, our main goal of this section is to prove that any right and any left homotopical structure gives rise to a model structure on $C$.
\end{sub}

\begin{defn}
Let $f, g \colon X\to Y$ be two morphisms. An \emph{I-homotopy} from $f$ to $g$ is a morphism
\[
h\colon I \otimes X \to Y
\]
such that $h(\partial_0 \otimes id_X) = f$ and $h(\partial_1 \otimes id_X)= g$. We denote by $[X,Y]_I$ the quotient of $\hom_{C}(X,Y)$ by the equivalence relation generated by the notion of $I$-homotopy. We denote by $\mathrm{Ho}_I( C)$ the category having the same objects as $C$ and morphism sets given by the quotients $[X,Y]_I$. We will refer to this category as the \emph{I-homotopy category of $C$}. We have a canonical projection $C \to \mathrm{Ho}_I(C)$. A morphism is an \emph{I-homotopy equivalence} if its image in the $I$-homotopy category is an isomorphism.
\end{defn}

\begin{Remark}
The functoriality of the cylinder ensures that $\mathrm{Ho}_I(C)$ is indeed a category.
\end{Remark}

\begin{sub}
We will prove the following pair of Theorems.
\end{sub}

\begin{thm}\label{righthm}
Suppose we have a right homotopical structure on $C$. Then there exists a unique model structure on $C$ with the following description.
\begin{enumerate}
	\item The class of cofibrations is precisely the class $\mathcal L$.
	\item A morphism $f\colon A \to B$ is a weak equivalence if and only if for all right $I$-fibrant objects $W\in C$, the induced morphism
	\[
	f^\ast \colon [B,W]_I\to [A,W]_I
	\]
	is bijective.
\end{enumerate}
Furthermore, an object is fibrant if and only if it is right $I$-fibrant and a morphism between right $I$-fibrant objects is a fibration if and only if it is a right $I$-fibration.
\end{thm}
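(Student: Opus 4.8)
The plan is to construct the model structure via the standard machinery for cofibrantly generated model categories built from a cylinder, following the Cisinski–Olschok approach adapted to the directed setting. The cofibrations are declared to be $\mathcal{L}$, and since $(\mathcal{L}, \mathcal{R})$ is cofibrantly generated, this class comes with its own generating set. The weak equivalences are defined as in (2) by the bijection condition on homotopy classes into right $I$-fibrant objects. The key is to verify the axioms of a model structure, most importantly the two factorization axioms and the lifting axioms, and to identify the trivial fibrations and fibrations correctly.

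**First I would** establish the basic homotopy theory relative to the right $I$-anodyne extensions. The crucial preliminary fact is that the class $An^r(I)$ is closed under the operation $\partial I \boxtimes (-)$ and contains all maps $\partial_1 \boxtimes i$ for $i \in \mathcal{L}$; from this one deduces that right $I$-anodyne extensions are "trivial cofibrations" in a homotopical sense, namely that they induce bijections on $[-, W]_I$ for right $I$-fibrant $W$. I expect to show that $I$-homotopy is a genuine equivalence relation on maps into a right $I$-fibrant object (using the exactness axioms to build the requisite homotopies), so that $[X, W]_I$ behaves well, and that right $I$-anodyne maps are in particular weak equivalences. The retract, two-out-of-three, and closure properties of the weak equivalences follow formally from the definition via the functor to $\mathrm{Ho}_I(C)$ restricted to fibrant targets.

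**The main obstacle will be** the factorization and lifting axioms, specifically proving that a map which is both a cofibration and a weak equivalence (a trivial cofibration) has the left lifting property against all right $I$-fibrations, and conversely. One direction requires a small-object-argument factorization through $An^r(I)$ together with a retract argument showing every trivial cofibration is a retract of a right $I$-anodyne extension; this is the heart of the theorem and is where the directed exactness hypotheses on the cylinder are essential. The reverse characterization—that fibrations between fibrant objects are exactly right $I$-fibrations—I would obtain by comparing lifting properties: a map between right $I$-fibrant objects lifts against all trivial cofibrations if and only if it lifts against all right $I$-anodyne extensions, using that over a fibrant base the trivial cofibrations and right $I$-anodyne maps generate the same saturated class up to retract. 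Uniqueness of the model structure is immediate once cofibrations and weak equivalences are fixed, since fibrations are then determined by the right lifting property against trivial cofibrations.

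**Throughout I would** lean on the analogy with \cite[Section 2.4]{cisinskibook}, checking at each step that the arguments only use the \emph{right}-directed axioms (the map $\partial_1 \boxtimes i$, not $\partial_0 \boxtimes i$) and never invoke symmetry of the cylinder. The exactness axioms guarantee that $I$ preserves cofibrations in the relevant pushout-product sense, which is what makes the homotopy relation compatible with the weak factorization system and allows the small object argument to produce factorizations into a right $I$-anodyne extension followed by a right $I$-fibration. The identification of fibrant objects with right $I$-fibrant objects then follows because the terminal object is fibrant and a map to it is a fibration precisely when it lifts against trivial cofibrations, which over the terminal object reduces to the right $I$-anodyne extensions by the retract argument above.
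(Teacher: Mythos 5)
Your overall architecture matches the paper's (homotopy relation on maps into right $I$-fibrant objects, right $I$-anodyne extensions are weak equivalences, a Smith/Simpson-type recognition argument, a retract-lemma step, and the identification of fibrations between fibrant objects), but the central step as you state it is false, and the error is precisely the subtlety that the directed setting forces on you. You claim that a trivial cofibration has the left lifting property against \emph{all} right $I$-fibrations, and that every trivial cofibration is a retract of a right $I$-anodyne extension. Neither holds: already in the absolute Contravariant structure on $\mathbf{sSet}$ generated by $(\Delta^1,\emptyset)$, the fibrant objects are the Kan complexes, the weak equivalences are the weak homotopy equivalences, and $\{0\}\to\Delta^1$ is a trivial cofibration that is not right anodyne and does not lift against right fibrations. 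Since the class of right $I$-anodyne extensions is saturated, ``retract of a right $I$-anodyne extension'' would force trivial cofibrations to coincide with right $I$-anodyne extensions, which contradicts the theorem itself (it only identifies fibrations with right $I$-fibrations \emph{between fibrant objects}). The correct statements, which the paper proves as Lemma \ref{fibranttarget} and Proposition \ref{trivcof}, are that a map in $\mathcal L$ with right $I$-fibrant \emph{codomain} is a weak equivalence iff it is right $I$-anodyne, and that a general map in $\mathcal L$ is a weak equivalence iff it lifts against right $I$-fibrations \emph{with right $I$-fibrant codomain}; the proof of the latter composes $i\colon K\to L$ with a right $I$-anodyne map $L\to L'$ into a fibrant object and lifts through $L'$. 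You gesture at the fibrant-codomain restriction in your last step, but your main lifting/retract claim omits it, and as stated the argument fails.

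A second, smaller omission: to make Simpson's criterion (or any recognition theorem) work you must also show that maps in $\mathcal R$ are weak equivalences and, conversely, that a right $I$-fibration with right $I$-fibrant codomain which is a weak equivalence lies in $\mathcal R$ (Propositions \ref{trivdual} and \ref{rfibtriv}). The paper does this via (duals of) \emph{right deformation retracts}, constructed by solving lifting problems against $\partial_1\boxtimes(-)$ and $\partial I\boxtimes(-)$; this is where the directedness of the cylinder genuinely enters, and your proposal does not indicate how you would produce these homotopies. Without this ingredient neither direction of the ``weak equivalence $\Leftrightarrow$ Simpson square'' characterization goes through.
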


\begin{thm}\label{lefthm}
Suppose we have a left homotopical structure on $C$. Then there exists a unique model structure on $C$ with the following description.
\begin{enumerate}
	\item The class of cofibrations is precisely the class $\mathcal L$.
	\item A morphism $f\colon A \to B$ is a weak equivalence if and only if for all left $I$-fibrant objects $W\in C$, the induced morphism
	\[
	f^\ast \colon [B,W]_I\to [A,W]_I
	\]
	is bijective.
\end{enumerate}
Furthermore, an object is fibrant if and only if it is left $I$-fibrant and a morphism between left $I$-fibrant objects is a fibration if and only if it is a left $I$-fibration.
\end{thm}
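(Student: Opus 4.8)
The plan is to prove Theorem \ref{righthm}; Theorem \ref{lefthm} then follows formally by exchanging the roles of $\partial_0$ and $\partial_1$, since a left homotopical structure for $(I,\partial_0,\partial_1,\sigma)$ is exactly a right homotopical structure for $(I,\partial_1,\partial_0,\sigma)$. Uniqueness is immediate, because a model structure is determined by its cofibrations and weak equivalences and these are prescribed in (1) and (2). Several axioms are formal: the category $C$ is bicomplete, being locally presentable; the class $\mathcal W$ of weak equivalences of (2) satisfies two-out-of-three and is closed under retracts, since $f\mapsto f^\ast$ is functorial and a retract of a bijection of sets is a bijection; and the cofibrations $\mathcal L$, being the left class of a weak factorization system, are closed under retracts, pushouts and transfinite composition. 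Thus the genuine content is the construction of the two factorizations together with the lifting axiom, that is, producing the weak factorization system (trivial cofibrations, fibrations) and identifying fibrant objects and fibrations.

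First I would show that every right $I$-anodyne extension lies in $\mathcal L\cap\mathcal W$. Given $i\colon K\to L$ in $An^r(I)$ and a right $I$-fibrant object $W$, the map $i^\ast\colon [L,W]_I\to [K,W]_I$ is surjective because $i$ has the left lifting property against $W\to\ast$, and injective because any $I$-homotopy $h\colon I\otimes K\to W$ between two restrictions extends along the map $\partial I\boxtimes i$ of Notation \ref{boxproduct} to an $I$-homotopy $I\otimes L\to W$. Here the third axiom for right $I$-anodyne extensions is precisely what makes $\partial I\boxtimes i$ anodyne, hence liftable against the fibrant $W$; a routine reduction from a single homotopy to a zig-zag then yields the bijection. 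Applying the small object argument to the generating set $\Lambda$ factors $X\to\ast$ as a right $I$-anodyne extension $X\to RX$ followed by a right $I$-fibration $RX\to\ast$, so that every object receives a right $I$-fibrant replacement through a weak equivalence.

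Next I would develop the homotopy theory of right $I$-fibrant objects. The computation above shows that on such objects $I$-homotopy is already an equivalence relation compatible with composition, so that the full subcategory of right $I$-fibrant objects in $\mathrm{Ho}_I(C)$ is computed by homotopy classes. A Whitehead-type argument, essentially the Yoneda lemma inside $\mathrm{Ho}_I(C)$, then shows that a map between right $I$-fibrant objects lies in $\mathcal W$ if and only if it is an $I$-homotopy equivalence. Combined with the replacements $A\to RA$, $B\to RB$ and two-out-of-three, this characterizes the weak equivalences: $f\colon A\to B$ lies in $\mathcal W$ exactly when $Rf\colon RA\to RB$ is an $I$-homotopy equivalence. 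The crucial refinement is the fibration lemma: a right $I$-fibration between right $I$-fibrant objects that lies in $\mathcal W$ has the right lifting property against all of $\mathcal L$, i.e. lies in $\mathcal R$. I would prove this by the classical covering-homotopy argument, exhibiting a section, a fiberwise homotopy to the identity, and lifting it through the homotopy lifting property of right $I$-fibrations against the maps $\partial_1\boxtimes i$.

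Finally I would assemble the model structure. One checks that $\mathcal L\cap\mathcal W$ is weakly saturated and, crucially, generated by a set, so that the small object argument supplies the factorization of any map into a trivial cofibration followed by a fibration; together with the factorization from $(\mathcal L,\mathcal R)$ this gives all remaining axioms, and the identification of the fibrant objects and of the fibrations between them with the right $I$-fibrant objects and the right $I$-fibrations drops out of the fibration lemma. I expect the main obstacle to be twofold. The deeper point is that, unlike in Cisinski's symmetric setting, only the one-sided axiom involving $\partial_1\boxtimes i$ is available, so every argument that Cisinski runs using homotopies at either end — in particular the homotopy lifting in the fibration lemma and the handling of the homotopy relation on fibrant objects — must be re-examined to ensure it only ever uses homotopies in the $\partial_1$-direction, as flagged in Remark \ref{plainanodyne}. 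The more technical point is establishing that $\mathcal L\cap\mathcal W$ is generated by a set, which is what legitimizes the second factorization and is the linchpin of the whole construction.
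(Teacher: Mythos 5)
Your outline agrees with the paper's on all of the substantive homotopy-theoretic steps: the reduction of Theorem \ref{lefthm} to Theorem \ref{righthm} by interchanging $\partial_0$ and $\partial_1$ (the paper phrases this as the left case being ``entirely analogous''), the proof that right $I$-anodyne extensions are weak equivalences by extending a homotopy along $\partial I\boxtimes i$ against a fibrant target (Proposition \ref{anodyneequivalence}), fibrant replacement by the small object argument, and the fibration lemma proved by a covering-homotopy argument with one-sided deformation retracts (Propositions \ref{trivdual} and \ref{rfibtriv}); your warning that every homotopy must run in the $\partial_1$-direction is exactly what the paper's notions of (dual of a) \emph{right} deformation retract are designed to enforce. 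Where you genuinely diverge is the final assembly, and there your route has a hole that the paper's does not. You propose to prove directly that $\mathcal L\cap\mathcal W$ is generated by a set and then run the small object argument; you correctly flag this as the linchpin, but you give no argument for it, and it is not routine -- it would amount to establishing accessibility of the class $\mathcal W$ by hand, i.e.\ re-proving the hard part of Smith's recognition theorem. The paper sidesteps this entirely by invoking Simpson's variant of that theorem (Theorem \ref{simpson}), whose hypotheses are only closure properties: one checks that the weak equivalences of (2) coincide with the class of maps fitting into a square whose horizontal legs are transfinite compositions of pushouts of $\Lambda$ and whose right leg lies in $\mathcal R$ (this is what Propositions \ref{trivdual} and \ref{rfibtriv} accomplish, a comparison your proposal does not need but also does not replace), and that the trivial cofibrations are closed under pushout and transfinite composition, which falls out of the lifting-property characterization of Proposition \ref{trivcof}: a map in $\mathcal L$ is a weak equivalence if and only if it has the left lifting property against right $I$-fibrations with right $I$-fibrant codomain. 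The recognition theorem then manufactures the generating set for you; I would adopt that reduction rather than attempt the set-generation claim directly.
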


\begin{sub}
The proof requires several steps. We will only focus on right homotopical structures. The proof for left homotopical structures is entirely analogous, requiring only minor modifications in the direction of the homotopy. The basis is Jeff Smith's recognition theorem for combinatorial model categories. We will use the following variant due to Carlos Simpson.
\end{sub}

\begin{thm}\label{simpson}
Let $\mathbf{M}$ be a locally presentable category and $S$ and $\Lambda$ sets of morphisms such that $\Lambda \subset l(r(S))$. Define a morphism $f\colon A\to B$ to be a weak equivalence if and only if there exists a diagram
\[
\begin{tikzcd}
A\rar \dar[swap]{f} & X\dar \\
B \rar & Y
\end{tikzcd}
\]
such that the horizontal arrows are transfinite compositions of pushouts of morphisms in $\Lambda$ and the right vertical arrow is in $r(S)$. Define the class of cofibrations to be $l(r(S))$ and suppose furthermore that
\begin{enumerate}
	\item the domains of $\ I$ and $\Lambda$ are cofibrant,
	\item the class of weak equivalences above is closed under retracts and satisfies 2-out-of-3,
	\item the class of trivial cofibrations is closed under pushouts and transfinite compositions. 
\end{enumerate}
Then there exists a cofibrantly generated model structure on $\mathbf{M}$ with the given class of cofibrations and weak equivalences.
\end{thm}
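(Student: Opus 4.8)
The plan is to deduce the statement from Jeff Smith's recognition theorem for combinatorial model categories, the whole point being that the explicit span description of the weak equivalences is engineered to supply, for free, the accessibility hypothesis that appears in the classical formulation. I would first set up the cofibration--trivial-fibration half of the structure. Since $\mathbf{M}$ is locally presentable and $S$ is a set, the small object argument produces the cofibrantly generated weak factorization system $(l(r(S)), r(S))$; its left class $l(r(S))$ is the prescribed class of cofibrations and $r(S)$ is the candidate class of trivial fibrations. The hypothesis $\Lambda \subset l(r(S))$ then guarantees that every transfinite composite of pushouts of $\Lambda$ again lies in $l(r(S))$, so that the horizontal legs of every admissible span are genuine cofibrations; this is where that hypothesis enters.

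Next I would verify the elementary inputs to Smith's theorem. First, every trivial fibration is a weak equivalence: given $f \in r(S)$, the commutative square with the identities along the top and bottom and $f$ itself on the right exhibits $f \in W$, since an identity is the empty transfinite composite of pushouts of $\Lambda$. Second, $W$ satisfies two-out-of-three and is closed under retracts, which is exactly hypothesis (2). Third, the class $l(r(S)) \cap W$ of trivial cofibrations is closed under pushouts and transfinite composition, which is hypothesis (3); together with the cofibrancy of the domains of $S$ and $\Lambda$ from hypothesis (1), these are the purely algebraic hypotheses of Smith's theorem.

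The main obstacle, and the substance of the whole argument, is the solution-set (accessibility) condition on $W$. Here I would argue that $W$ is an accessible, accessibly embedded subcategory of the arrow category $\mathrm{Arr}(\mathbf{M})$, exploiting that both ingredients of the span are accessibly controlled: the class $r(S)$ is the right class of a weak factorization system cofibrantly generated by a \emph{set}, hence is an accessible subcategory of $\mathrm{Arr}(\mathbf{M})$, while the property of being a transfinite composite of pushouts of the set $\Lambda$ is produced by the functorial small object argument, an accessible construction. Concretely, I would realize $W$ as the essential image of the ``left-leg'' projection from the accessible category of admissible spans (commutative squares whose horizontal legs are $\Lambda$-cellular and whose right leg lies in $r(S)$) to $\mathrm{Arr}(\mathbf{M})$, and invoke the stability of accessibility under such accessible images, together with the closure properties already established, to extract a \emph{set} of generating trivial cofibrations for $l(r(S)) \cap W$. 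This is precisely the step that replaces the separate accessibility hypothesis in the original statement of Smith's theorem.

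Finally, with all of Smith's hypotheses in hand --- cofibrations $l(r(S))$, weak equivalences $W$, the inclusion $r(S) \subseteq W$, two-out-of-three and retract closure for $W$, the closure properties of $l(r(S)) \cap W$, and the accessibility of $W$ --- I would apply Smith's theorem to obtain a cofibrantly generated model structure on $\mathbf{M}$ whose cofibrations are $l(r(S))$ and whose weak equivalences are $W$, exactly as asserted; note that the generating trivial cofibrations are those extracted in the previous paragraph and need not be $\Lambda$ itself, so no identification of $l(r(\Lambda))$ with the trivial cofibrations is required. The only place demanding genuine care is the accessibility argument; everything else is the formal bookkeeping described above.
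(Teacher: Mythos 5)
The paper does not prove this statement at all: its ``proof'' is the single line ``This is [Simpson, Theorem 8.7.3]''. So any genuine argument you give is necessarily a different route from the paper's, and your overall strategy --- feed the data into Jeff Smith's recognition theorem, observe that hypotheses (1)--(3) together with the inclusion $r(S)\subseteq W$ (witnessed by the identity span) supply all the algebraic conditions, and isolate the solution-set/accessibility condition on $W$ as the one nontrivial point --- is exactly the right one, and is in the spirit of what Simpson actually does in the cited reference.

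However, the step you yourself identify as ``the only place demanding genuine care'' is where the argument as written breaks. First, accessibility is \emph{not} stable under essential images: the essential image of an accessible functor between accessible categories need not be accessible (in ZFC this can fail; it holds under Vop\v{e}nka's principle, and in general one must pass to the ``powerful image'' or argue differently). So ``invoke the stability of accessibility under such accessible images'' is appealing to a false general principle. Second, your category of admissible spans is not obviously accessible to begin with: while $r(S)$ is indeed an accessible, accessibly embedded subcategory of $\mathrm{Arr}(\mathbf{M})$, the class of transfinite composites of pushouts of $\Lambda$ (the $\Lambda$-cellular maps) is not a lifting-property class and is not known to be accessible; the functorial small object argument gives an accessible \emph{factorization functor}, which is not the same as accessibility of the full subcategory of cellular maps. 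The repair is the actual content of Simpson's Theorem 8.7.3: one fixes a regular cardinal $\kappa$ for which $S$, $\Lambda$, and the relevant constructions are $\kappa$-bounded and shows by a direct cardinality argument that every admissible span, and hence every weak equivalence, is a $\kappa$-filtered colimit of $\kappa$-presentable ones, which yields the solution set for $l(r(S))\cap W$ without ever asserting that $W$ is the image of an accessible functor. As written, your proof has a genuine gap at precisely this point; everything else is correct bookkeeping.
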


\begin{proof}
This is \cite[Theorem 8.7.3]{simpson}
\end{proof}

\begin{sub}
In our situation, the set $\Lambda$ will be the generating set of right \ian extensions and the set $S$ will be a generating set for $\mathcal L$ (recall that $(\mathcal L, \mathcal R)$ was assumed to be cofibrantly generated). It is clear that the domains of $S$ and $\Lambda$ are cofibrant by our assumptions on $(\mathcal L, \mathcal R)$ and that our class of weak equivalences is closed under retracts and satisfies 2-out-of-3. Thus, our task will be to show that our class of weak equivalences satisfy the description of Simpson's theorem and that the trivial cofibrations are closed under pushouts and transfinite compositions. Along the way, our proofs will also imply the description of fibrations we gave in our theorem. We first show that any right \ian is a weak equivalence in the sense of Theorem \ref{righthm}.
\end{sub}

\begin{lem}
If $\ W$ is right \ifib, then $I$-homotopy is an equivalence relation on the set $\ \Hom(X,W)$ for any object $\ X$.
\end{lem}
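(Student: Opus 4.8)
The plan is to treat the three requirements of an equivalence relation in turn, using the fibrancy of $W$ only for the last two. Reflexivity holds for every object and needs no hypothesis on $W$: for $f\colon X\to W$ the composite $f\sigma_X\colon I\otimes X\to W$ is an $I$-homotopy from $f$ to itself, since $\sigma_X(\partial_0\otimes\mathrm{id}_X)=\sigma_X(\partial_1\otimes\mathrm{id}_X)=\mathrm{id}_X$ by the cylinder axioms. The content of the lemma is therefore symmetry and transitivity, and for both I would extend partially defined homotopies over the \emph{double cylinder} $I\otimes I\otimes X$ and then read off a homotopy on one remaining face.

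The key geometric input is a single right $I$-anodyne extension. Applying the defining property that $\partial_1\boxtimes i\in An^r(I)$ for $i\in\mathcal L$ to the cofibration $\emptyset\to X$ shows, using that $I$ preserves the initial object, that $\partial_1\colon X\to I\otimes X$ is itself right $I$-anodyne. Since $An^r(I)$ is closed under the operation $i\mapsto\partial I\boxtimes i$, the map $\partial I\boxtimes\partial_1$ lies in $An^r(I)$. Unwinding the pushout defining $\partial I\boxtimes\partial_1$, and using that $I$ commutes with colimits, one identifies its domain with the union inside $I\otimes I\otimes X$ of three faces: the two outer endpoint copies $\partial_0\otimes\mathrm{id}$ and $\partial_1\otimes\mathrm{id}$ of $I\otimes X$, together with the inner endpoint $I\partial_1$; the only face left out is the inner endpoint $I\partial_0$. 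Because $W$ is right \ifib, any map from this three-face union into $W$ extends over all of $I\otimes I\otimes X$, after which we restrict along $I\partial_0$ to obtain a homotopy on the missing face.

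For symmetry, given an $I$-homotopy $h$ from $f$ to $g$, I would prescribe on the three faces the constant homotopy $g\sigma_X$ on the outer $0$-face, the homotopy $h$ on the outer $1$-face, and the constant homotopy $g\sigma_X$ on the inner $1$-face. A direct check on the two edges where the inner $1$-face meets the outer faces shows these agree, so they assemble into a single map out of the domain of $\partial I\boxtimes\partial_1$; filling and restricting along $I\partial_0$ produces an $I$-homotopy from $g$ to $f$. Transitivity is the same construction with a different labelling: given $h_1$ from $f$ to $g$ and $h_2$ from $g$ to $h$, I would place $h_1$ on the outer $0$-face, the constant homotopy $h\sigma_X$ on the outer $1$-face, and $h_2$ on the inner $1$-face, whose restriction along $I\partial_0$ after filling is an $I$-homotopy from $f$ to $h$.

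The main obstacle is the bookkeeping of the second paragraph: correctly identifying the domain of $\partial I\boxtimes\partial_1$ as precisely the union of those three faces and tracking the gluing maps, so that the face assignments above genuinely glue to a morphism out of the pushout. The asymmetry is essential here—only the $\partial_1$-endpoint inclusion is right $I$-anodyne (Remark \ref{plainanodyne})—which is exactly what forces the missing, fillable face to be the inner $0$-face and dictates the particular placement of $h$, $h_1$, $h_2$ and the constant homotopies.
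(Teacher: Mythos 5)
Your proof is correct, and it is a genuine variant of the paper's argument rather than a verbatim match. Both proofs extend a map defined on three faces of the double cylinder $I\otimes I\otimes X$ over the whole of it and then read off the missing face, but you choose a different right $I$-anodyne horn: you first observe that $\partial_1\colon X\to I\otimes X$ is itself right $I$-anodyne (via $\partial_1\boxtimes(\emptyset\to X)$ together with $I\otimes\emptyset=\emptyset$ and the standing assumption $\emptyset\to X\in\mathcal L$), and then invoke closure under $\partial I\boxtimes(\cdot)$ to fill the horn missing the \emph{inner} $0$-face; the paper instead applies $\partial_1\boxtimes i$ directly to the cofibration $i\colon\partial I\otimes X\to I\otimes X$ and fills the horn missing the \emph{outer} $0$-face. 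A second difference is organizational: the paper runs the filling argument once, for a pair of homotopies $h\colon u\Rightarrow w$ and $k\colon v\Rightarrow w$ with common target, and extracts both symmetry and transitivity from that single statement, whereas you run the argument twice with different face assignments and obtain each property directly. Your gluing conditions do check out --- the faces you prescribe agree on the two edges where the inner $1$-face meets the outer faces in both the symmetry and the transitivity case --- and you are right that the placement of the data is forced by the fact that only the $\partial_1$-end is anodyne. One small caveat: for a general exact cylinder the domain of $\partial I\boxtimes\partial_1$ is a pushout and need not literally be a subobject of $I\otimes I\otimes X$ (a ``union of faces''), but since a map out of a pushout is exactly a compatible pair of maps on the two legs, your argument only uses the universal property and goes through unchanged.
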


\begin{proof}
Consider three morphisms
\[
u,v,w\colon X \to W.
\]	
Suppose we have homotopies
\[
h\colon I\otimes X \to W \quad \text{such that}\ h(\partial_0 \otimes 1_X)= u,\ h(\partial_1 \otimes 1_X)= w
\]
and
\[
k\colon I\otimes X \to W \quad \text{such that}\ h(\partial_0 \otimes 1_X)= v,\ h(\partial_1 \otimes 1_X)= w.
\]
We will show that there exists an $I$-homotopy from $u$ to $v$.

We have a map
\[
((h,k), \sigma \otimes w)\colon I\otimes \partial I \otimes X \sqcup_{\1 \otimes \partial I \otimes X} \{1\}\otimes I \otimes X\to W
\]
and the map
\[
I\otimes \partial I \otimes X \sqcup_{\1 \otimes \partial I \otimes X} \{1\}\otimes I \otimes X \to I\otimes I\otimes X
\]
is a right $I$-anodyne extension since $\partial I \otimes X \to I \otimes X \in \mathcal L$. By assumption $W$ is $I$-fibrant, thus we have a homotopy
\[
H\colon I\otimes I\otimes X\to W
\]
such that
\[
H(1_I \otimes \partial_0 \otimes 1_X)=h
\]
and
\[
H(1_I \otimes \partial_1 \otimes 1_X)=k.
\]
Moreover, we have
\[
H(\partial_1 \otimes 1_I\otimes 1_X)= \sigma \otimes w
\]
Now define an $I$-homotopy $\eta\colon I\otimes X \to W$ by the formula
\[
\eta = H(\partial_0 \otimes 1_I\otimes 1_X).
\]
We then have
\[
\eta (\partial_0 \otimes 1_X)= H(\partial_0 \otimes \partial_0\otimes 1_X)= h(\partial_0\otimes 1_X)= u
\]
and 
\[
\eta (\partial_1 \otimes 1_X)= H(\partial_0 \otimes \partial_1 \otimes 1_X )= k(\partial_0 \otimes 1_X)=v.
\]
Thus $\eta$ defines a homotopy from $u$ to $v$.

Now if $h$ is the constant homotopy at $u$ and $k$ is a homotopy from $v$ to $u$, then $\eta$ provides a homotopy from $u$ to $v$ showing that $I$-homotopy is symmetric. Transitivity follows from the above construction and symmetry.
\end{proof}

\begin{prop}\label{anodyneequivalence}
Any right I-anodyne extension is a weak equivalence.
\end{prop}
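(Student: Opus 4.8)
The plan is to unwind the definition of weak equivalence from Theorem \ref{righthm}: for a right $I$-anodyne extension $f\colon A \to B$ and an \emph{arbitrary} right $I$-fibrant object $W$, I must show that the restriction map $f^\ast\colon [B,W]_I \to [A,W]_I$ is a bijection. The preceding lemma is the crucial input: since $W$ is right $I$-fibrant, $I$-homotopy is already an equivalence relation on each $\Hom(X,W)$, so $[X,W]_I$ is literally $\Hom(X,W)$ modulo $I$-homotopy, and any identification in the quotient is witnessed by a single cylinder homotopy. This reduction is what makes the lifting arguments below clean.

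For surjectivity I would argue directly by lifting. Given $a\colon A \to W$, the square with $f$ on the left and $W \to \ast$ on the right admits a diagonal filler, because $f$ lies in $An^r(I)$ and $W \to \ast$ is a right $I$-fibration (i.e. it has the right lifting property against $An^r(I)$). This produces $b\colon B \to W$ with $b f = a$ on the nose, whence $f^\ast[b] = [a]$; so $f^\ast$ is even surjective at the level of representatives.

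For injectivity, suppose $b_0, b_1\colon B \to W$ satisfy $[b_0 f] = [b_1 f]$. By the lemma this equality is realized by an honest $I$-homotopy $h\colon I \otimes A \to W$ with $h(\partial_0 \otimes 1_A) = b_0 f$ and $h(\partial_1 \otimes 1_A) = b_1 f$. Together with the pair $(b_0, b_1)\colon \partial I \otimes B \to W$, which agrees with $h$ over $\partial I \otimes A$, this defines a map out of the pushout $\partial I \otimes B \sqcup_{\partial I \otimes A} I \otimes A$. The induced comparison map to $I \otimes B$ is precisely $\partial I \boxtimes f$, which by the third axiom of right $I$-anodyne extensions lies in $An^r(I)$ because $f$ does. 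Fibrancy of $W$ then yields a filler $H\colon I \otimes B \to W$ restricting to $b_0$ and $b_1$ on the two ends, that is, an $I$-homotopy from $b_0$ to $b_1$, so $[b_0] = [b_1]$.

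The only genuinely delicate points are bookkeeping: checking that the two pieces of data glue along $\partial I \otimes A$ to give a well-defined map on the pushout, and correctly identifying the canonical comparison map with $\partial I \boxtimes f$ so that the closure axiom for $An^r(I)$ applies. Everything else is a formal consequence of the two lifting properties, with the equivalence-relation lemma doing the essential work of converting the quotient relation into a single cylinder homotopy that can be fed into the lift.
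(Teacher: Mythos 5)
Your proposal is correct and follows essentially the same route as the paper: surjectivity by lifting $f$ against the right $I$-fibration $W\to\ast$ (which the paper leaves implicit in "it is enough to show injectivity"), and injectivity by converting the relation to a single homotopy via the preceding lemma and then lifting against $\partial I\boxtimes f$, which is right $I$-anodyne by the closure axiom. No gaps.
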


\begin{proof}
Let $f\colon A \to B$ be a right $I$-anodyne extension and let $W$ be right $I$-fibrant. It is enough to show that
\[
f^\ast \colon [B,W]_I \to [A,W]_I
\]
is injective. Thus let $\beta_0,\beta_1\colon B \to W$ be two morphisms such that $\beta_0 f$ is homotopic to $\beta_1 f$. By the above lemma, there exists a homotopy
\[
h\colon I\otimes A\to W
\]
such that $h_0= \beta_0 f$ and $h_1 = \beta_1 f$. This gives rise to a lifting problem
\[
\begin{tikzcd}[column sep = huge]
I\otimes A \sqcup_{\partial I \otimes A} \partial I \otimes B \rar{(h, \beta_0 \sqcup \beta_1)}\dar  & W\\
I\otimes B \urar[dashed]&
\end{tikzcd}
\]
Since $f$ is right $I$-anodyne, the vertical map is also right $I$-anodyne and hence, since $W$ is right $I$-fibrant, the lifting problem admits a solution. This provides a homotopy from $\beta_0$ to $\beta_1$.
\end{proof}

\begin{sub}
Now suppose we have a commutative square
\[
\begin{tikzcd}
A\dar[swap]{f}\rar & X\dar \\
B \rar & Y
\end{tikzcd}
\]
in which the horizontal maps are transfinite compositions of pushouts of $\Lambda$ and the map $X\to Y$ is in the class $\mathcal R$. In particular, the horizontal maps are right $I$-anodyne extensions and hence weak equivalences by the above proposition. To conclude that $f$ is a weak equivalence, we need to show morphisms in the class $\mathcal R$ are weak equivalences. We can actually show a stronger statement. To this end we introduce a particularly nice class of $I$-homotopy equivalences (and hence weak equivalences).
\end{sub}

\begin{defn}
A morphism $i \colon A\to X$ is called a \emph{right deformation retract} if there exists a morphism $r\colon X \to A$ and a homotopy $h\colon I\otimes X \to X$ such that 
\begin{enumerate}
	\item $ri= id_A$
	\item $h_0 = id_X$ and $h_1= ir$
	\item $h(id_I\otimes i)= \sigma \otimes i$.
\end{enumerate}
A morphism $r\colon X \to A$ is called a \emph{dual of a right deformation retract} if there exists a map $i\colon A \to X$ and a homotopy $h\colon I\otimes X \to X$ such that
\begin{enumerate}
	\item $ri=id_A$
	\item $h_0 = id_X$ and $h_1=ir$
	\item $rh=\sigma\otimes  r$.
\end{enumerate}
\end{defn}

\begin{prop}\label{trivdual}
Any map $f\colon X\to Y \in \mathcal R$ is the dual of a right deformation retract.
\end{prop}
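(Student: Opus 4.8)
The plan is to realize both the section and the contracting homotopy as solutions to lifting problems of a map in $\mathcal{L}$ against $f\in\mathcal{R}$; the two inputs I will use are our standing hypothesis that $\emptyset\to Z\in\mathcal{L}$ for every object $Z$, and the cylinder axiom that $\partial_0\sqcup\partial_1\colon \partial I\otimes X\to I\otimes X$ lies in $\mathcal{L}$. Here $f\colon X\to Y$ plays the role of $r$ and $Y$ the role of $A$ in the definition of the dual of a right deformation retract.

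First I would produce a section. Since $\emptyset\to Y\in\mathcal{L}$ and $f\in\mathcal{R}$, the square
\[
\begin{tikzcd}
\emptyset \rar \dar & X \dar{f}\\
Y \rar{id_Y} & Y
\end{tikzcd}
\]
admits a lift $i\colon Y\to X$, which by construction satisfies $fi=id_Y$; this is condition (1).

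Next I would obtain $h$ from a single lifting problem that encodes conditions (2) and (3) at once. Writing $\sigma\otimes f = f\circ(\sigma\otimes id_X)$ by naturality of $\sigma$, consider
\[
\begin{tikzcd}[column sep = large]
\partial I\otimes X \rar{(id_X,\, if)} \dar & X \dar{f}\\
I\otimes X \rar{\sigma\otimes f} & Y,
\end{tikzcd}
\]
whose left edge is the cylinder inclusion $\partial_0\sqcup\partial_1$. I would check that the square commutes: both composites restrict on each copy of $X$ in $\partial I\otimes X\cong X\sqcup X$ to $f$, using $fi=id_Y$ along the top and the cylinder identities $\sigma\partial_0=\sigma\partial_1=id_X$ along the bottom. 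The left edge lies in $\mathcal{L}$ by the cylinder axiom, so since $f\in\mathcal{R}$ there is a lift $h\colon I\otimes X\to X$.

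Finally I would read the conditions off the lift. Commutativity of the upper triangle gives $h(\partial_0\otimes id_X)=id_X$ and $h(\partial_1\otimes id_X)=if$, that is $h_0=id_X$ and $h_1=if$ (condition (2)); commutativity of the lower triangle gives $fh=\sigma\otimes f$ (condition (3)). I do not expect a real obstacle here: the whole argument is the observation that (2) and (3) assemble into one lifting square against $f$, and the only points requiring attention are verifying that this square commutes and recognizing that its left edge is precisely the cofibration $\partial I\otimes X\to I\otimes X$ furnished by the cylinder.
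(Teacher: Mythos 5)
Your proposal is correct and follows essentially the same route as the paper: a section obtained by lifting $\emptyset\to Y$ against $f$, followed by the single lifting problem of $\partial I\otimes X\to I\otimes X$ (which is in $\mathcal L$ by the cylinder axioms) against $f$ with top map $(id_X, if)$ and bottom map $\sigma\otimes f$. The only difference is that you spell out the commutativity check that the paper leaves implicit.
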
 

\begin{proof}
We find a section $s\colon Y \to X$ via the lifting problem
\[
\begin{tikzcd}
\emptyset \rar \dar & X\dar{r} \\
Y \rar{id} \urar[dashed]{s} & Y
\end{tikzcd}
\]
since $\emptyset \to Y \in \mathcal L$. We have a lifting problem
\[
\begin{tikzcd}[column sep = huge]
\partial I \otimes X \rar{(id_X \sqcup sr)} \dar & X\dar{r}\\
I\otimes X\urar[dashed] \rar{\sigma\otimes r}\rar[dashed] & Y
\end{tikzcd}
\]
which admits a lift since the left vertical map is in $\mathcal L$ by exactness of the cylinder, verifying that $r$ is the dual of a right deformation retract.
\end{proof}

\begin{sub}
In conclusion we have shown that whenever we have a commutative square
\[
\begin{tikzcd}
A\dar[swap]{f}\rar & X\dar \\
B \rar & Y
\end{tikzcd}
\]
in which the horizontal maps are right $I$-anodyne extensions and the map $X\to Y$ is in the class $\mathcal R$, then $f$ is a weak equivalence. In particular, any map satisfying Simpson's description is a weak equivalence in our sense. 

Conversely, suppose that $f\colon A \to B$ is a weak equivalence. By the small object argument we find a right $I$-anodyne extension $B\to Y$ such that $Y$ is right $I$-fibrant. Again by the small object argument, we factorize the composition $A\to B \to Y$ as a right $I$-anodyne extension followed by a right $I$-fibration to obtain the square
\[
\begin{tikzcd}
A\dar[swap]{f} \rar & X\dar\\
B\rar & Y.
\end{tikzcd}
\]
By the 2-out-of-3 property, the morphism $X\to Y$ is a weak equivalence. By construction, it is also a right $I$-fibration with right $I$-fibrant domain. Thus we need to show that right $I$-fibrations with right $I$-fibrant domain, which are weak equivalences, are in the class $\mathcal R$.
\end{sub}

\begin{lem}
A right $I$-fibration is in $\mathcal R$ if and only if it is the dual of a right deformation retract.
\end{lem}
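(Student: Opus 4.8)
The forward implication needs no new argument: Proposition~\ref{trivdual} already shows that \emph{every} map in $\mathcal{R}$ is the dual of a right deformation retract, and a right $I$-fibration lying in $\mathcal{R}$ is in particular such a map. So the real content of the lemma is the converse, and this is exactly where the right $I$-fibration hypothesis enters. The plan is to take a right $I$-fibration $f\colon X\to Y$ that is the dual of a right deformation retract, witnessed by a section $i\colon Y\to X$ with $fi=id_Y$ and a homotopy $h\colon I\otimes X\to X$ satisfying $h_0=id_X$, $h_1=if$ and $fh=\sigma\otimes f$, and to verify directly that $f\in\mathcal R=r(\mathcal L)$ by solving an arbitrary lifting problem.

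Concretely, I would start from a square given by $j\colon K\to L\in\mathcal L$, maps $u\colon K\to X$, $v\colon L\to Y$ with $fu=vj$, and aim to produce a lift $\ell\colon L\to X$. The guiding idea is that $iv\colon L\to X$ is an \emph{approximate} lift: it satisfies $f(iv)=v$, but on $K$ it equals $(if)u$ rather than $u$. The homotopy $h$ is precisely what repairs this discrepancy. I set $G:=h(1_I\otimes u)\colon I\otimes K\to X$; using the cylinder naturality one checks $G(\partial_0\otimes 1_K)=u$ and $G(\partial_1\otimes 1_K)=ifu=(iv)j$, and from property~$(3)$ together with naturality of $\sigma$ one gets $fG=vj\,\sigma_K$. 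I then glue $G$ and $iv$ along $\{1\}\otimes K$ (where both restrict to $(iv)j$) into a single map
\[
(G,iv)\colon \{1\}\otimes L\sqcup_{\{1\}\otimes K} I\otimes K \to X,
\]
and place this against $f$ in the square whose bottom edge is the constant homotopy $\sigma\otimes v\colon I\otimes L\to Y$. Its left-hand map is $\partial_1\boxtimes j$, which is right $I$-anodyne by the defining axioms of $An^r(I)$.

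The one genuinely computational point to settle is the commutativity of this square, which reduces to the two cylinder identities $\sigma\partial_0=\sigma\partial_1=id$ and the naturality of $\partial_1$ and $\sigma$. Granting this, right $I$-fibrancy of $f$ supplies a lift $H\colon I\otimes L\to X$ with $H(1_I\otimes j)=G$, $H(\partial_1\otimes 1_L)=iv$ and $fH=\sigma\otimes v$. I would then claim that $\ell:=H(\partial_0\otimes 1_L)$ solves the original problem: evaluating $fH=\sigma\otimes v$ at the $0$-end and using $\sigma\partial_0=id$ yields $f\ell=v$, while restricting $H(1_I\otimes j)=G$ along $\partial_0$ and using naturality of $\partial_0$ yields $\ell j=G(\partial_0\otimes 1_K)=u$. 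This exhibits the desired lift, so $f$ has the right lifting property against all of $\mathcal L$, i.e.\ $f\in\mathcal R$.

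The only obstacle worth flagging is bookkeeping rather than depth: one must attach $iv$ at the $\{1\}$-end and not the $\{0\}$-end, since it is precisely $\partial_1\boxtimes j$ (and not $\partial_0\boxtimes j$) that the axioms guarantee to be right $I$-anodyne. This is exactly where the chosen \emph{direction} of the homotopical structure manifests itself, and it is the step where the argument would fail if one tried to run it with the opposite endpoint. Everything else is a chase through naturality squares and the two relations $\sigma\partial_i=id$.
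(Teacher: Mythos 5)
Your proposal is correct and follows essentially the same route as the paper: cite Proposition~\ref{trivdual} for the forward direction, and for the converse solve an arbitrary lifting problem against $j\in\mathcal L$ by gluing the homotopy $h(1_I\otimes u)$ with the approximate lift $iv$ over the domain of $\partial_1\boxtimes j$, lifting against the right $I$-fibration, and restricting the resulting $H$ along $\partial_0$. Your version merely spells out the naturality computations that the paper leaves as ``one checks,'' and correctly identifies the $\{1\}$-endpoint as the place where the direction of the homotopical structure is used.
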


\begin{proof}
We have already seen in Proposition \ref{trivdual}, that morphisms in $\mathcal R$ are duals of deformation retracts. Thus consider a right $I$-fibration $p\colon X \to Y$ which is also the dual of a deformation retract. We have to show that for any morphism $i\colon K \to L\in \mathcal L$ a solution to the lifting problem
\[
\begin{tikzcd}
K\rar{a}\dar[swap]{i} & X\dar{p}\\
L \rar{b}\urar[dashed] & Y	
\end{tikzcd}
\]
exists. Since $p\colon X\to Y$ is the dual of a deformation retract, we have a retraction $s\colon Y\to X$ and a homotopy $h\colon I\otimes X \to X$ from the identity to $sp$. We obtain a solution for the lifting problem
\[
\begin{tikzcd}[column sep = huge, row sep = huge]
I\otimes K \sqcup_{\1\otimes K} \1 \otimes L \rar{(h(id_I\otimes a),sb)}\dar & X\dar{p}\\
I\otimes L \rar{\sigma \otimes b}\urar[dashed]{l} & Y	
\end{tikzcd}
\]
since the right vertical map is a right $I$-anodyne extension. One checks that this solution restricts to a solution of the original lifting problem.
\end{proof}

\begin{prop}\label{rfibtriv}
A right $I$-fibration with right $I$-fibrant codomain is a weak equivalence if and only if it is in the class $\mathcal R$.
\end{prop}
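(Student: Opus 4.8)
The plan is to prove both implications through the characterization supplied by the preceding Lemma, namely that a right $I$-fibration lies in $\mathcal R$ precisely when it is the dual of a right deformation retract.

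For the easy implication, suppose $p\colon X\to Y$ lies in $\mathcal R$. By Proposition \ref{trivdual} it is the dual of a right deformation retract, so there are a map $i\colon Y\to X$ and a homotopy with $pi=id_Y$ and $ip$ homotopic to $id_X$; hence $p$ becomes invertible in $\mathrm{Ho}_I(C)$. Since $[-,W]_I=\mathrm{Hom}_{\mathrm{Ho}_I(C)}(-,W)$, the map $p^\ast$ is then bijective for every $W$, in particular for every right $I$-fibrant $W$, so $p$ is a weak equivalence.

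For the converse, let $p$ be a right $I$-fibration which is a weak equivalence and whose codomain $Y$ is right $I$-fibrant. Then $X$ is right $I$-fibrant as well, being the source of a right $I$-fibration over a right $I$-fibrant object, so that $I$-homotopy is an equivalence relation on all the relevant hom-sets. By the preceding Lemma it suffices to exhibit $p$ as the dual of a right deformation retract. First I would produce a homotopy inverse: surjectivity of $p^\ast\colon[Y,X]_I\to[X,X]_I$ gives $g$ with $gp$ homotopic to $id_X$, and injectivity of $p^\ast\colon[Y,Y]_I\to[X,Y]_I$ upgrades this to $pg$ homotopic to $id_Y$. I would then rectify $g$ to a strict section: taking a homotopy from $id_Y$ to $pg$ and lifting it against $p$ along the right $I$-anodyne map $\1\otimes Y\to I\otimes Y$ yields $i\colon Y\to X$ with $pi=id_Y$ and $i$ homotopic to $g$, so that $ip$ is still homotopic to $id_X$.

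The heart of the matter is the third datum: a homotopy $h\colon I\otimes X\to X$ with $h_0=id_X$, $h_1=ip$ and, crucially, $ph=\sigma\otimes p$, i.e. a homotopy realizing $ip\simeq id_X$ that is fiberwise over $Y$. This verticality is the main obstacle, because the right $I$-anodyne maps only let us prescribe the $\1$-end of a homotopy, whereas the definition demands control of the $\{0\}$-end; a naive single lift produces a vertical homotopy with the wrong source. My resolution is to fix any homotopy $H\colon id_X\Rightarrow ip$ and to run the construction used for the equivalence-relation lemma on $I\otimes I\otimes X$, but lifting against $p$ and feeding it the two homotopies $H$ and the self-homotopy $(ip)\circ H$ of $ip$. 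The point of this auxiliary self-homotopy is that $(ip)\circ H$ has the same image $pH$ under $p$ as $H$ does; consequently the required map into $Y$ on the double cylinder may be taken to be $pH$ precomposed with the map $I\otimes I\otimes X\to I\otimes X$ that collapses the new cylinder coordinate by $\sigma$, which is a genuine morphism. Lifting the right $I$-anodyne inclusion $I\otimes\partial I\otimes X\sqcup_{\1\otimes\partial I\otimes X}\1\otimes I\otimes X\to I\otimes I\otimes X$ against $p$ over this map, and restricting the solution to its $\partial_0$-face, produces the desired $h$: its endpoints come out as $id_X$ and $ip$, while $ph$ is forced to be the constant homotopy $\sigma\otimes p$. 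With $(i,h)$ in hand, the preceding Lemma gives $p\in\mathcal R$, which completes the proof.
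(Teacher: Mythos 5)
Your proof is correct and follows essentially the same route as the paper: reduce via the preceding lemma to exhibiting $p$ as the dual of a right deformation retract, extract a homotopy inverse from fibrancy of $Y$, rectify it to a strict section by lifting along $\1\otimes Y\to I\otimes Y$, and then straighten the homotopy $id_X\simeq ip$ into a vertical one by lifting on the double cylinder against the data $(H, ipH)$ together with the constant homotopy at $ip$, over the base obtained by collapsing the coordinate of the homotopy-to-be and applying $pH$. This is exactly the paper's construction (there the base map is written $pk\circ(id_I\otimes\sigma)$), so the only advice is to make explicit which of the two cylinder coordinates is collapsed, since the compatibility of the square depends on it.
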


\begin{proof}
Suppose $p\colon X \to Y$ is a right $I$-fibration with right $I$-fibrant codomain, which is also a weak equivalence. We will show that in this case $p$ is the dual of a right deformation retract, hence by the above lemma we may conclude that $p\in \mathcal R$. Since $Y$ is right $I$-fibrant, $p$ is an $I$-homotopy equivalence and we find a map $t\colon Y \to X$ and a homotopy $h\colon I\otimes Y \to Y$ from $id_Y$ to $pt$. Consider the lifting problem
\[
\begin{tikzcd}
\1 \otimes Y \rar{t} \dar & X\dar{p}\\
I\otimes Y \rar{h}\urar[dashed]{h'} & Y
\end{tikzcd}
\]
which admits the indicated lift $h'$ since $p$ is a right $I$-fibration and the left vertical map is a right $I$-anodyne extension. We define $s:= h'_0 \colon Y \to X$. Note that $X$ is right $I$-fibrant and since $p$ is an isomorphism in the $I$-homotopy category and $s$ is a right inverse, and hence inverse to $p$, there is a homotopy $k\colon I\otimes X \to X$ from $id_X$ to $sp$. In general, $k$ does not necessarily exhibit $p$ as a dual of a right deformation retract, since the assumption $pk = \sigma \otimes p$ need not be satisfied. However, we may consider the lifting problem
\[
\begin{tikzcd}
I\otimes \partial I \otimes X \sqcup_{\1 \otimes \partial I \otimes X} \1 \otimes  I \otimes X \arrow{rr}{(k, spk)\cup (\sigma \otimes sp)} \dar & & X\dar{p}\\
I\otimes I\otimes X \rar[swap]{id_I \otimes \sigma} \arrow[dashed]{urr}{K} & I\otimes X \rar[swap]{pk} & Y.
\end{tikzcd}
\]
Now define $k' := K_0\colon I\otimes X \to X$. One readily checks that 
\begin{itemize}
	\item $k'_0= k_0 = id_X$
	\item $k'_1= k_1 = sp$
	\item $pk'= \sigma \otimes pk_0 = \sigma \otimes p$.
\end{itemize}
\end{proof}

\begin{sub}
The only thing left to show to ensure the existence of our desired model structure is that trivial cofibrations are closed under pushouts and transfinite compositions. We will in fact show that they are saturated.
\end{sub}

\begin{lem}\label{fibranttarget}
A morphism in the class $\mathcal L$ with right $I$-fibrant codomain is a weak equivalence if and only if it is a right $I$-anodyne extension.
\end{lem}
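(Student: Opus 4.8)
The plan is to prove the two implications separately; the backward direction is essentially free, while the forward direction (weak equivalence $\Rightarrow$ right $I$-anodyne) carries all the content and is handled by the classical retract argument.

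For the ``if'' direction there is nothing new to do: any right $I$-anodyne extension is a weak equivalence by Proposition \ref{anodyneequivalence}, and this uses neither the cofibration hypothesis nor the fibrancy of the codomain. So I would simply cite it.

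For the ``only if'' direction, suppose $f\colon A \to B$ lies in $\mathcal L$, is a weak equivalence, and $B$ is right $I$-fibrant. First I would apply the small object argument to the generating set $\Lambda$ to factor $f$ as $A \xrightarrow{j} X \xrightarrow{p} B$, with $j$ a right $I$-anodyne extension and $p$ a right $I$-fibration. By Proposition \ref{anodyneequivalence} the map $j$ is a weak equivalence, so the $2$-out-of-$3$ property (already granted for our class of weak equivalences) forces $p$ to be a weak equivalence as well. Now $p$ is a right $I$-fibration which is a weak equivalence and whose codomain $B$ is right $I$-fibrant; this is exactly the situation of Proposition \ref{rfibtriv}, which therefore yields $p \in \mathcal R$.

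It then remains to realize $f$ as a retract of $j$. Since $f \in \mathcal L$ and $p \in \mathcal R$ form a lifting pair, the square
\[
\begin{tikzcd}
A \rar{j} \dar[swap]{f} & X \dar{p}\\
B \rar{id_B} \urar[dashed]{r} & B
\end{tikzcd}
\]
admits a lift $r\colon B \to X$ satisfying $rf = j$ and $pr = id_B$. The pairs $(id_A, r)$ and $(id_A, p)$ then define morphisms $f \to j$ and $j \to f$ in the arrow category whose composite is $id_f$, exhibiting $f$ as a retract of $j$. Because $An^r(I) = l(r(\Lambda))$ is of the form $l(-)$, it is a saturated class and in particular closed under retracts, so $f$ is itself a right $I$-anodyne extension, completing the argument.

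The only delicate point is ensuring that the hypotheses of Proposition \ref{rfibtriv} are genuinely met: it is precisely the right $I$-fibrancy of the codomain $B$, built into the statement of this lemma, that licenses the passage from ``$p$ is a weak equivalence'' to ``$p \in \mathcal R$.'' Everything else is a routine assembly of the weak factorization structure, so I expect no serious obstacle beyond bookkeeping.
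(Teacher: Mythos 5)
Your proof is correct and follows exactly the paper's argument: the backward direction is Proposition \ref{anodyneequivalence}, and the forward direction factors $f$ as a right $I$-anodyne extension followed by a right $I$-fibration, invokes 2-out-of-3 and Proposition \ref{rfibtriv} to place the fibration in $\mathcal R$, and concludes by the Retract Lemma. You have merely spelled out the retract argument that the paper leaves implicit.
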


\begin{proof}
We already know that right $I$-anodyne extensions are weak equivalences by Proposition \ref{anodyneequivalence}. Thus, let $i\colon K \to L\in \mathcal L$ with right $I$-fibrant codomain. We factorize $i = qj$ where $j$ is right $I$-anodyne and $q$ is a right $I$-fibration. Then $i$ is a weak equivalence if and only if $q$ is. Thus if $i$ is a weak equivalence, it follows from Proposition \ref{rfibtriv} that $q\in \mathcal R$. It follows from the Retract Lemma that $i$ is a retract of $j$, hence a right $I$-anodyne extension.
\end{proof}

\begin{prop}\label{trivcof}
Let $i\colon K\to L\in \mathcal L$. Then $i$ is a weak equivalence if and only if it has the left lifting property with respect to right $I$-fibrations with right $I$-fibrant codomain.
\end{prop}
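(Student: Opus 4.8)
The plan is to prove both implications by systematically reducing to the case of a fibrant codomain, where Lemma~\ref{fibranttarget} converts the weak equivalence condition into the right $I$-anodyne condition. Note that this characterization is exactly what is needed to finish the construction of the model structure: the class of maps with the left lifting property against a fixed class of morphisms is automatically saturated, and $\mathcal L$ is saturated, so the displayed equality exhibits the trivial cofibrations as an intersection of two saturated classes, hence saturated, verifying the last hypothesis of Theorem~\ref{simpson}.

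For the forward implication, suppose $i\colon K\to L$ is a weak equivalence in $\mathcal L$ and let $p\colon X\to Y$ be a right $I$-fibration with $Y$ right $I$-fibrant; I must solve a square with top map $a\colon K\to X$ and bottom map $b\colon L\to Y$. First I would choose a right $I$-anodyne extension $\ell\colon L\to \bar L$ with $\bar L$ right $I$-fibrant. Then $\ell i$ is a weak equivalence (a composite of weak equivalences) lying in $\mathcal L$ and having fibrant codomain, so Lemma~\ref{fibranttarget} makes it right $I$-anodyne. Because $Y$ is fibrant and $\ell$ is right $I$-anodyne, the map $b$ extends along $\ell$ to some $\bar b\colon \bar L\to Y$ with $\bar b\ell = b$; this is the one place where fibrancy of the codomain is used. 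Since $\ell i$ is right $I$-anodyne and $p$ is a right $I$-fibration, the lifting problem with left map $\ell i$, right map $p$, top map $a$ and bottom map $\bar b$ admits a solution $\bar c\colon \bar L\to X$, and then $c:=\bar c\,\ell$ solves the original problem, as $ci = \bar c\,\ell i = a$ and $pc = p\bar c\,\ell = \bar b\ell = b$.

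For the converse, which is the substantial direction, suppose $i\in\mathcal L$ has the left lifting property against every right $I$-fibration with fibrant codomain. I would again pick a fibrant replacement $\ell\colon L\to\bar L$ and then factor $\ell i$ as $K\xrightarrow{j}\bar K\xrightarrow{p}\bar L$ with $j$ right $I$-anodyne and $p$ a right $I$-fibration, so that $\bar K$ is fibrant. The resulting commutative square $pj=\ell i$ has right $I$-anodyne, hence weak equivalence, horizontal maps, so by $2$-out-of-$3$ the map $i$ is a weak equivalence if and only if $p$ is. The hypothesis now applies to $p$, whose codomain $\bar L$ is fibrant, producing a lift $s\colon L\to\bar K$ with $si=j$ and $ps=\ell$; extending $s$ along the right $I$-anodyne map $\ell$ into the fibrant object $\bar K$ yields $g\colon\bar L\to\bar K$ with $g\ell=s$. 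The key point is then that $g$ is a two-sided $I$-homotopy inverse of $p$: from $pg\ell=ps=\ell$ and $gpj = g\ell i = si = j$ one reads off $\ell^\ast[pg]=\ell^\ast[\mathrm{id}]$ and $j^\ast[gp]=j^\ast[\mathrm{id}]$, and since $j$ and $\ell$ are weak equivalences (Proposition~\ref{anodyneequivalence}) into the fibrant objects $\bar K$ and $\bar L$, the maps $j^\ast$ and $\ell^\ast$ on $I$-homotopy classes are bijective, forcing $pg\simeq\mathrm{id}$ and $gp\simeq\mathrm{id}$. Hence $p$ is an isomorphism in $\mathrm{Ho}_I(C)$, so a weak equivalence, and therefore so is $i$.

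The main obstacle is precisely the non-fibrancy of $L$: neither direction follows from a bare retract argument, because factoring $i$ itself produces a right $I$-fibration over the non-fibrant object $L$, against which the hypothesis says nothing. The device that overcomes this in both directions is fibrant replacement together with the extension of maps into fibrant objects along right $I$-anodyne maps; in the converse direction the extra subtlety is recognizing that the lift $s$ and its extension $g$ assemble into an honest homotopy inverse, a verification that reduces cleanly to the defining bijectivity of the weak equivalences $j$ and $\ell$ on $I$-homotopy classes into fibrant targets.
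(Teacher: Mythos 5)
Your forward implication is the same argument as the paper's: pick a right $I$-anodyne fibrant replacement $\ell\colon L\to\bar L$, use Lemma~\ref{fibranttarget} to see that $\ell i$ is right $I$-anodyne, extend the bottom map of the square along $\ell$ into the fibrant codomain, lift against $\ell i$, and restrict. The converse is where you diverge. Both you and the paper factor $\ell i = p j$ with $j$ right $I$-anodyne and $p$ a right $I$-fibration over the fibrant object $\bar L$, but the paper then finishes in one line with the Retract Lemma: since $i$ (by hypothesis) and $\ell$ (being anodyne) both have the left lifting property against $p$, so does $\ell i$, hence $\ell i$ is a retract of $j$, hence right $I$-anodyne, hence a weak equivalence, and 2-out-of-3 concludes. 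You instead use the hypothesis to produce the diagonal $s$ with $si=j$, $ps=\ell$, extend it to $g\colon\bar L\to\bar K$, and verify that $g$ is an $I$-homotopy inverse of $p$ by exploiting the bijectivity of $j^\ast$ and $\ell^\ast$ on homotopy classes into the fibrant objects $\bar K$ and $\bar L$. This is correct (it implicitly uses that composition descends to $\mathrm{Ho}_I(C)$ and that isomorphisms there are weak equivalences, both available in the paper), and it has the merit of exhibiting an explicit homotopy inverse rather than invoking closure of the anodyne class under retracts; the cost is that it is longer and re-proves, in this special case, what the Retract Lemma gives for free. Your opening observation about why this proposition yields saturation of the trivial cofibrations matches how the paper uses it in the proof of Theorem~\ref{righthm}.
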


\begin{proof}
Consider a right $I$-anodyne extension $j\colon L \to L'$, where $L'$ is right $I$-fibrant. If $i$ is a weak equivalence, it follows that $ji$ is a weak equivalence and by the above lemma is in particular a right $I$-anodyne extension. Now consider a diagram
\[
\begin{tikzcd}
K\dar[swap]{i} \rar & X\dar{p}\\
L \rar{f}\dar[swap]{j} & Y \\
L'
\end{tikzcd}
\]
where $p$ is a right $I$-fibration with right $I$-fibrant codomain. Then there exists a lift $\phi\colon L' \to Y$ such that $\phi j = f$. We obtain the diagram
\[
\begin{tikzcd}
K\dar[swap]{ji} \rar & X\dar{p}\\
L' \rar{\phi} & Y 
\end{tikzcd}
\]
which admits a lift since $ji$ is right $I$-anodyne. This lift restricts to a lift of the original diagram.

Conversely, consider a factorization of $ji$ given by
\[
\begin{tikzcd}
K\rar{i}\dar[swap]{k} & L\dar{j}\\
X\rar{p} & L'
\end{tikzcd}
\]
where $k$ is right $I$-anodyne and $p$ a right $I$-fibration. It follows from the Retract Lemma that $ji$ is a retract of $k$, hence a right $I$-anodyne extension. Thus by the 2-out-of-3 property, $i$ is a trivial cofibration.
\end{proof}

\begin{proof}[Proof of Theorem \ref{righthm}]
It follows from Proposition \ref{trivdual} and Proposition \ref{rfibtriv} that a morphism $f\colon A\to B$ is a weak equivalence if and only if there exists a commutative square
\[
\begin{tikzcd}
A\dar[swap]{f} \rar & X\dar \\
B\rar & Y
\end{tikzcd}
\]
in which the horizontal maps are transfinite compositions of pushouts of $\Lambda$ and the right vertical map is a trivial fibration. Furthermore, Proposition \ref{trivcof} implies that the class of trivial cofibrations is saturated, hence Theorem \ref{simpson} guarantees the existence our desired model structure.

Proposition \ref{trivcof} also implies that right $I$-fibrations between right $I$-fibrant objects are fibrations and in particular the fibrant objects are precisely the right $I$-fibrant ones.
\end{proof}

\begin{sub}
We will finish this section with the definition of an \emph{elementary homotopical datum}. Suppose we have fixed an exact functorial cylinder $(I, \partial_0, \partial_1, \sigma)$ on $C$ with respect to $(\mathcal L, \mathcal R)$. 
\end{sub}

\begin{constr}\label{anodynegenerators}
Suppose we have a set of morphisms $S$. Then there is a smallest class of right $I$-anodyne extensions containing $S$, which may be constructed as follows.

Given any set of morphisms $T\subset \mathcal L$, we define the set
\[
\Lambda(T):= \set{\partial I \boxtimes i | i \in T}.
\]
We now choose a generating set $M$ of the class $\mathcal L$ and define the set $\Lambda_I(S,M)$ inductively by setting
\[
\Lambda_I^{0,r} := S \cup \set{\partial_1 \boxtimes i | i \in M}
\]
and
\[
\Lambda^{n+1,r}_I(S,M) := \Lambda \left(\Lambda^{n,r}_I(S,M)\right).
\]
Finally, we define
\[
\Lambda_I^r(S,M) := \bigcup_n \Lambda_I^{n,r}(S,M).
\] 
\end{constr}

\begin{lem}\label{smallanodyne}
The smallest saturated class generated by $\Lambda_I^r(S,M)$ is a class of right $I$-anodyne extensions.
\end{lem}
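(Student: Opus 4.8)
The plan is to take $An^r(I) := l(r(\Lambda_I^r(S,M)))$, the smallest saturated class containing the set $\Lambda_I^r(S,M)$, and to verify the three axioms in the definition of a class of right $I$-anodyne extensions. Throughout I use the standing assumption that $S \subseteq \mathcal L$ (so that the construction can possibly produce a subclass of $\mathcal L$ at all), and that $M \subseteq \mathcal L$ since $M$ generates $\mathcal L$.

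First I would dispose of the first axiom. By construction $\Lambda_I^r(S,M) = \bigcup_n \Lambda_I^{n,r}(S,M)$ is a small set, being a countable union of sets each obtained from its predecessor by the set-indexed operation $\Lambda(-)$; hence $An^r(I) = l(r(\Lambda_I^r(S,M)))$ is of the required form. It remains to check $An^r(I)\subseteq \mathcal L$, for which it suffices to see $\Lambda_I^r(S,M)\subseteq \mathcal L$, as $\mathcal L$ is saturated. I would prove by induction on $n$ that $\Lambda_I^{n,r}(S,M)\subseteq \mathcal L$: the base case combines $S\subseteq \mathcal L$ and $M\subseteq \mathcal L$ with the third clause of exactness, which gives $\partial_1\boxtimes i \in \mathcal L$ for $i\in M$; the inductive step is exactly the second clause of exactness, giving $\partial I \boxtimes i \in \mathcal L$ whenever $i \in \mathcal L$.

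The heart of the matter is the following stability statement, which I would isolate as a key lemma: for every saturated class $\mathcal A$ and for either of the operations $\partial_1 \boxtimes(-)$ and $\partial I \boxtimes(-)$, the class of maps $\{\, i \mid \partial_\bullet \boxtimes i \in \mathcal A \,\}$ is again saturated. Each of these operations is the Leibniz (pushout–product) construction of a \emph{fixed} natural transformation — namely $\partial_1 \otimes(-)\colon id_C \Rightarrow I$, respectively $\partial_0 \sqcup \partial_1 \colon \partial I \Rightarrow I$ — against the variable map $i$. Because the cylinder is exact, the functor $I$ commutes with small colimits, and $\partial I = id_C \sqcup id_C$ trivially does as well; from this one checks that the assignment $i \mapsto \partial_\bullet \boxtimes i$, regarded as a functor on the arrow category of $C$, carries cobase changes to cobase changes, transfinite composites to transfinite composites, and retracts to retracts, while sending isomorphisms to isomorphisms. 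Consequently the preimage under this functor of any saturated class is saturated, which is the assertion. I expect this to be the main obstacle: the argument is formal, but it requires the careful pushout–product book-keeping of verifying that a pushout square (or a transfinite tower) exhibiting $i'$ as built from $i$ is transported by the box product to a pushout square (or tower) exhibiting $\partial_\bullet \boxtimes i'$ as built from $\partial_\bullet \boxtimes i$.

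Granting the key lemma, the remaining two axioms follow quickly. For the second axiom I would apply the lemma with $\mathcal A = An^r(I)$ and the operation $\partial_1 \boxtimes(-)$: the resulting saturated class contains $M$, since $\partial_1 \boxtimes m \in \Lambda_I^{0,r} \subseteq \Lambda_I^r(S,M)\subseteq An^r(I)$ for every $m\in M$, and therefore contains $l(r(M)) = \mathcal L$; hence $\partial_1 \boxtimes i \in An^r(I)$ for all $i\in \mathcal L$, as required. For the third axiom I would instead take the operation $\partial I \boxtimes(-)$ with the same $\mathcal A = An^r(I)$: by the very definition of the layers $\Lambda_I^{n+1,r} = \Lambda(\Lambda_I^{n,r})$ we have $\partial I \boxtimes i \in \Lambda_I^r(S,M)\subseteq An^r(I)$ for every $i\in \Lambda_I^r(S,M)$, so the saturated class $\{\, i \mid \partial I \boxtimes i \in An^r(I)\,\}$ contains the generating set $\Lambda_I^r(S,M)$; being saturated it then contains $l(r(\Lambda_I^r(S,M))) = An^r(I)$, which is precisely the third axiom.
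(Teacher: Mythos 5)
Your proof is correct, but it reaches the key closure properties by a different route than the paper. Both arguments reduce to showing that, for $\partial_1\boxtimes(-)$ and $\partial I\boxtimes(-)$, the class $\{\,i \mid \partial_\bullet\boxtimes i \in An^r(I)\,\}$ is saturated and contains an appropriate generating set ($M$, resp.\ $\Lambda_I^r(S,M)$). You obtain saturation of this class directly, by verifying that the Leibniz construction against a fixed natural transformation of cocontinuous functors carries cobase changes, transfinite composites and retracts to the same, so that the preimage of a saturated class is saturated; this is the standard ``Leibniz constructions preserve cell structures'' lemma, and as you anticipate it is where all the book-keeping lives. The paper instead exploits that $I$, preserving colimits on a locally presentable category, has a right adjoint $(\cdot)^I$: a lifting problem of $\partial_1\boxtimes i$ against $p\colon X\to Y$ transposes to a lifting problem of $i$ against $X^I\to X\times_Y Y^I$, and since $p\in r(\Lambda_I^r(S,M))$ forces this transposed map to lie in $r(M)=\mathcal R$ (because $\partial_1\boxtimes m\in\Lambda_I^{0,r}$ for $m\in M$), every $i\in\mathcal L$ lifts against it. Saturation is then never checked by hand --- it comes for free from the identity $\mathcal L=l(\mathcal R)$ and the fact that classes of the form $l(\mathcal X)$ are automatically saturated. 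The hypotheses consumed are the same (cocontinuity of $I$ is exactly what produces the adjoint), so the difference is one of economy: the adjoint-transposition argument is shorter, while your version makes explicit the combinatorial content that the transposition hides. Your treatment of the first axiom (smallness of $\Lambda_I^r(S,M)$ and the inductive verification that it lands in $\mathcal L$ via exactness) is a detail the paper leaves implicit, and is correct.
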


\begin{proof}
Since $I$ commutes with colimits it has a right adjoint denoted by $(\cdot)^I$. Thus, lifting problems of the form
\[
\begin{tikzcd}
I\otimes K \sqcup_{\1 \otimes K} 1\otimes L\dar \rar & X\dar \\
I\otimes L\urar[dashed] \rar & Y 
\end{tikzcd} 
\] 
correspond to lifting problems of the form
\[
\begin{tikzcd}
K\dar \rar & X^I\dar \\
L \rar\urar[dashed] & X \times_Y Y^I.
\end{tikzcd}
\]
We show that the smallest saturated class containing $\Lambda_I^r(S,M)$ is a class of right $I$-anodyne extensions. The above correspondence shows that whenever $X\to Y$ has the right lifting property with respect to $\Lambda_I^r(S,M)$, then 
\[
X^I \to X\times_Y \times Y^I
\]
has the right lifting property with respect to any morphism in $M$ and hence any morphism in $\mathcal L$. Thus, the saturated class is closed under the operation $\partial_1 \boxtimes (\cdot)$. A similar argument shows that it is also closed under the operation $\partial I \boxtimes (\cdot)$. Conversely, it is clear that any class of right $I$-anodyne extensions which contains $S$ is contained in the weakly saturated class generated by $\Lambda_I^r(S,M)$.
\end{proof} 

\begin{defn}
An \emph{elementary homotopical datum} consists of an exact cylinder $(I,\partial_0, \partial_1,\sigma)$ together with a set of morphisms $S$.
\end{defn}

\begin{Example}\label{segmentgenerators}
Let $A$ be a small category and consider the exact cylinder $I\times (\cdot)$ on $\psh(A)$ as in Example \ref{separatingsegment}. Consider the elementary homotopical datum given by $(I,\emptyset)$. Then the right $I$-anodyne extensions have a particularly simple description. Let $M$ be a cellular model for $\psh(A)$ and consider the set of morphisms
\[
I \times K \sqcup_{\1 \times K} \1 \times L \to I \times L
\]
for $K \to L \in M$. An easy calculation shows that the saturated class generated by this set is the class of right $I$-anodyne extensions associated to $(I,\emptyset)$, see also \cite[Remarque 1.3.15]{cisinski}.
\end{Example}

\begin{sub}
Let us consider an elementary homotopical datum given by $(I,\partial_0, \partial_1,\sigma)$ and $S$, which we will denote by $(I,S)$ for brevity. Then by Lemma \ref{smallanodyne} and its dual version, we obtain a right as well as a left homotopical structure and hence by Theorems \ref{righthm} and \ref{lefthm} two model structures on the category of presheaves on $C$.
\end{sub}

\begin{defn}
Let $(I,S)$ be an elementary homotopical structure and denote by $r(I,S)$ the right homotopical structure generated by it and by $l(I,S)$ the left homotopical structure generated by it. We will call the model structure induced by $r(I,S)$ the \emph{Contravariant model structure generated by $(I,S)$} and the model structure induced by $l(I,S)$ the \emph{Covariant model structure generated by $(I,S)$}. An equivalence in the Contravariant model structure is called a \emph{contravariant equivalence} and an equivalence in the Covariant model structure is called a \emph{covariant equivalence}.
\end{defn}

\section{Abstract finality}\label{abstrcof}

\begin{sub}
Suppose we have an elementary homotopical datum. In the previous section we have established two model structures arising from such a datum, the Covariant and the Contravariant model structure. In this section, we will discuss the notions of final and initial maps, which arise very naturally in this setting. To this end, we will consider Co- and Contravariant model structures for \emph{families}.
\end{sub}

\begin{constr}\label{relhomotopicaldatum}
Suppose we have an object $A\in C$. Then we have a weak factorization system $(\mathcal L_A, \mathcal R_A)$ on $C/A$, where the left class is defined using the forgetful functor to $C$. The cylinder $I$ on $C$ induces a cylinder $I_A$ on the category of $C/A$ whose action on objects $p\colon X \to A$ is given by the composition
\[
I\otimes X \xrightarrow{\sigma} X \xrightarrow{p} A.
\]
Starting with a class of right $I$-anodyne extensions $An^r(I)$, it is easy to check that the class $An^r(I_A)$ of those morphisms in $C/A$, whose underlying maps in $C$ are right $I$-anodyne extension, defines a class of right $I_A$-anodyne extensions.
\end{constr}

\begin{sub}
Thus applying Theorem \ref{righthm}, we obtain a relative version.
\end{sub}

\begin{thm}\label{relativerighthm}
There exists a unique model structure on the category $C/A$ with cofibrations the class $\mathcal L_A$ and fibrant objects the right $I$-fibrations with target $A$. Dually, there exists a unique model structure on $C/A$ with cofibrations the class $\mathcal L_A$ and fibrant objects the left fibrations with target $A$.
\end{thm}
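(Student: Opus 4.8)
The plan is to deduce this theorem directly from Theorem \ref{righthm} (and, for the second statement, from its dual Theorem \ref{lefthm}) by verifying that Construction \ref{relhomotopicaldatum} equips the slice category $C/A$ with a genuine right homotopical structure. The bulk of the argument is therefore organizational: one checks that each ingredient demanded by Theorem \ref{righthm} is present on $C/A$ and then invokes that theorem. The observation that makes every check routine is that the forgetful functor $U\colon C/A \to C$ creates small colimits, and that the cylinder $I_A$, the left class $\mathcal L_A$, and the class $An^r(I_A)$ are all obtained by transporting the corresponding data on $C$ along $U$.

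First I would record the ambient hypotheses. The category $C/A$ is locally presentable, since slices of locally presentable categories are again locally presentable, and $(\mathcal L_A, \mathcal R_A)$ is the cofibrantly generated weak factorization system of Construction \ref{relhomotopicaldatum}. The map from the initial object $\emptyset \to A$ of $C/A$ to any object lies in $\mathcal L_A$ because its underlying map is $\emptyset \to X \in \mathcal L$. Next I would verify that $I_A$ is exact: since $U$ creates colimits and $I_A$ is $I$ followed by reattaching the structure map, $I_A$ commutes with small colimits; and because pushouts in $C/A$ are computed underlying, the maps $\partial I_A \boxtimes j$ and $\partial_i \boxtimes j$ have underlying maps $\partial I \boxtimes Uj$ and $\partial_i \boxtimes Uj$, which lie in $\mathcal L$ by exactness of $I$ and hence in $\mathcal L_A$.

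The heart of the matter is confirming that $An^r(I_A)$, consisting of those morphisms of $C/A$ whose underlying map is right $I$-anodyne, is a class of right $I_A$-anodyne extensions. The two box-product closure axioms transfer immediately, again because the relevant box products are computed underlying. The point requiring genuine care is the existence of a \emph{small} generating set. Writing $An^r(I) = l(r(\Lambda))$, I would let $\Lambda_A$ consist of all morphisms of $C/A$ whose underlying map lies in $\Lambda$; this is a set because a lift of a generator $\lambda\colon K \to L$ to $C/A$ is determined by a map $L \to A$, and $\Hom_C(L,A)$ is a set. The key lemma is that any solution of an underlying lifting square against a morphism $p$ over $A$ is automatically a morphism over $A$ (one checks the triangle to $A$ commutes using that $p$ lives over $A$); this yields $p \in r(\Lambda_A)$ if and only if $Up \in r(\Lambda)$, and hence $l(r(\Lambda_A)) = U^{-1}\bigl(l(r(\Lambda))\bigr) = An^r(I_A)$, where one uses that the class of morphisms of $C/A$ with underlying map in the saturated class $l(r(\Lambda))$ is itself saturated. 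I expect this bookkeeping to be the only real obstacle; everything else is a transport of structure along $U$.

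With the right homotopical structure on $C/A$ established, Theorem \ref{righthm} produces the desired model structure with cofibrations $\mathcal L_A$ and fibrant objects the right $I_A$-fibrant ones. It remains to identify the latter with the right $I$-fibrations with target $A$: the terminal object of $C/A$ is $\mathrm{id}_A$, so an object $p\colon X \to A$ is right $I_A$-fibrant precisely when $p$, viewed as a map in $C/A$, has the right lifting property against $An^r(I_A)$, which by the same lift-creation observation is equivalent to $Up = p$ having the right lifting property in $C$ against $An^r(I)$, i.e. to $p$ being a right $I$-fibration. The second, left-handed assertion follows verbatim by replacing Theorem \ref{righthm} with Theorem \ref{lefthm} throughout.
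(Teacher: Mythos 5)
Your proposal is correct and follows the same route as the paper: the paper simply invokes Construction \ref{relhomotopicaldatum} (asserting that the induced data give a right homotopical structure on $C/A$) and then applies Theorem \ref{righthm}, leaving the verifications as "easy to check." You have merely supplied those verifications in detail --- in particular the correct observation that lifts in $C$ against a map over $A$ are automatically maps over $A$, which is exactly what makes the transfer of the generating set and the identification of fibrant objects work.
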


\begin{sub}
Now fix an elementary homotopical datum $\mathcal I:=(I,S)$. By the above theorem, we obtain for any object $A\in C$ a Contravariant and Covariant model structure on the category of $C/A$ induced by $\mathcal I$.
\end{sub}

\begin{defn}
A morphism $f\colon X\to Y$ is called $\mathcal I$\emph{-final} if for all objects $A$ and all morphisms $p\colon Y \to A$ the induced morphism
\[
\begin{tikzcd}
X \rar{f} \drar[swap]{p\circ f} & Y \dar{p}\\
 & A
\end{tikzcd}
\]
is a contravariant equivalence in the category $C/A$. Dually, it is called \emph{$\mathcal I$-initial} if the above morphism is a covariant equivalence in the category $C/A$.
\end{defn}

\begin{Remark}
There is conflicting terminology in the literature. What we have called \emph{final} agrees with Cisinski's definition in \cite{cisinskibook} and Joyal's definition in \cite{joyalnotes}, \cite{joyalquadern}, while Lurie calls these morphisms \emph{cofinal} in \cite{lurie}. On the other hand, what we have called \emph{initial} is called \emph{cofinal} in \cite{cisinskibook}, while it is not explicitly defined in \cite{lurie}.
\end{Remark}

\begin{sub}
Thus, the $\mathcal I$-final (resp. $\mathcal I$-initial) maps are precisely those, which are equivalences in the contravariant (resp. covariant) model structures for \emph{all} families. 
\end{sub}

\begin{lem}\label{finalfibrations}
A morphism in the class $\mathcal L$ is $\mathcal I$-final if and only if it is a right $\mathcal I$-anodyne extension. A right $\mathcal I$-fibration is $\mathcal I$-final if and only if it is in the class $\mathcal R$.
\end{lem}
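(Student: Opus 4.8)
The plan is to reduce the notion of $\mathcal I$-finality to statements about the Contravariant model structure on a single, well-chosen slice category. The key observation is that, by Construction \ref{relhomotopicaldatum}, each slice $C/A$ carries a right homotopical structure for which a morphism is right $I_A$-anodyne (resp. a right $I_A$-fibration, resp. in $\mathcal R_A$) precisely when its underlying morphism in $C$ is right $\mathcal I$-anodyne (resp. a right $\mathcal I$-fibration, resp. in $\mathcal R$). Consequently all the results established for right homotopical structures in Section \ref{abstrcontr}, in particular Proposition \ref{anodyneequivalence}, Proposition \ref{trivdual}, Proposition \ref{rfibtriv} and Lemma \ref{fibranttarget}, apply verbatim inside each $C/A$.

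For the forward implications I would argue that anodyne maps and maps in $\mathcal R$ are final because they become weak equivalences in every slice. If $f\colon X \to Y \in \mathcal L$ is right $\mathcal I$-anodyne, then for any $p\colon Y \to A$ the induced morphism $(X, pf) \to (Y, p)$ is right $I_A$-anodyne by Construction \ref{relhomotopicaldatum}, hence a contravariant equivalence in $C/A$ by the relative form of Proposition \ref{anodyneequivalence}; thus $f$ is $\mathcal I$-final. Likewise, if $f$ is a right $\mathcal I$-fibration lying in $\mathcal R$, then for every $p$ the induced morphism lies in $\mathcal R_A$ and is therefore the dual of a right deformation retract by Proposition \ref{trivdual}, hence an $I_A$-homotopy equivalence and in particular a contravariant equivalence; so $f$ is again $\mathcal I$-final.

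For the converse implications I would specialize the definition of finality to the case $A = Y$ and $p = \mathrm{id}_Y$. Here the induced morphism is exactly $f\colon (X, f) \to (Y, \mathrm{id}_Y)$, and $(Y, \mathrm{id}_Y)$ is the terminal object of $C/Y$, which is always right $I_Y$-fibrant. If $f \in \mathcal L$ is $\mathcal I$-final, then this morphism is a contravariant equivalence in $\mathcal L_Y$ with right $I_Y$-fibrant codomain, so by Lemma \ref{fibranttarget} it is right $I_Y$-anodyne, which means $f$ is right $\mathcal I$-anodyne. If instead $f$ is a right $\mathcal I$-fibration that is $\mathcal I$-final, then $f\colon (X, f) \to (Y, \mathrm{id}_Y)$ is a right $I_Y$-fibration with right $I_Y$-fibrant codomain which is a contravariant equivalence, so by Proposition \ref{rfibtriv} it lies in $\mathcal R_Y$, i.e. $f \in \mathcal R$.

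The main point requiring care, and the step I expect to be the genuine obstacle, is the translation underlying both converses: that a morphism $f$ in $C$ is a right $\mathcal I$-fibration if and only if the corresponding morphism $(X, f) \to (Y, \mathrm{id}_Y)$ into the terminal object of $C/Y$ is a right $I_Y$-fibration, together with the fact that the terminal object is fibrant there. This is a formal unwinding of lifting problems in the slice against the right $I_Y$-anodyne maps described in Construction \ref{relhomotopicaldatum}, but it must be checked carefully so that the relative versions of Lemma \ref{fibranttarget} and Proposition \ref{rfibtriv} may be invoked legitimately.
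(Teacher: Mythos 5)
Your proof is correct and follows essentially the same route as the paper: the forward implications come from right $\mathcal I$-anodyne maps and maps in $\mathcal R$ being weak equivalences in every slice, and the converses come from specializing to $A=Y$, $p=\mathrm{id}_Y$ (where the codomain is the terminal, hence fibrant, object of $C/Y$) and invoking Lemma \ref{fibranttarget} and Proposition \ref{rfibtriv} respectively. The translation step you flag at the end is indeed the formal unwinding of lifting problems you describe, and is exactly what Construction \ref{relhomotopicaldatum} provides.
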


\begin{proof}
By construction it is clear that right $\mathcal I$-anodyne extensions are $\mathcal I$-final. Conversely, if $i\colon X\to Y$ is $\mathcal I$-final then it is in particular a trivial cofibration with fibrant domain in Contravariant model structure on $C/Y$. By Lemma \ref{fibranttarget}, it is right $\mathcal I$-anodyne.

It is also clear that any map in the class $\mathcal R$ is $\mathcal I$-final. Conversely, if $p\colon X \to Y$ is a right $\mathcal I$-fibration which is also $\mathcal I$-final, then it is a right $\mathcal I$-fibration with fibrant codomain in the Contravariant model structure on $C/Y$ which is a weak equivalence. By Proposition \ref{rfibtriv} it is in the class $\mathcal R$.
\end{proof}

\begin{prop}
The class of $\mathcal I$-final maps satisfies the right cancellation property.
\end{prop}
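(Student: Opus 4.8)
The plan is to reduce the statement entirely to the two-out-of-three property of weak equivalences in the relative Contravariant model structures provided by Theorem \ref{relativerighthm}. Recall that the right cancellation property asserts: given composable morphisms $f\colon X\to Y$ and $g\colon Y\to Z$ such that both $f$ and the composite $gf$ are $\mathcal I$-final, the morphism $g$ is $\mathcal I$-final. To prove this I would fix an arbitrary object $A$ together with an arbitrary morphism $p\colon Z\to A$; by the definition of $\mathcal I$-finality it then suffices to show that $g$, regarded as a morphism $(Y,pg)\to(Z,p)$ in $C/A$, is a contravariant equivalence.

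The key observation is that, once the objects $X$, $Y$, $Z$ are equipped with the structure maps $pgf$, $pg$, $p$ to $A$ respectively, this map sits inside a commutative factorization over $A$
\[
(X,pgf)\xrightarrow{\ f\ }(Y,pg)\xrightarrow{\ g\ }(Z,p)
\]
whose composite is $gf\colon(X,pgf)\to(Z,p)$. First I would note that $f$ is a contravariant equivalence in $C/A$: this is precisely the instance of $\mathcal I$-finality of $f$ applied to the morphism $pg\colon Y\to A$. Likewise $gf$ is a contravariant equivalence in $C/A$, being the instance of $\mathcal I$-finality of $gf$ applied to $p\colon Z\to A$. With two of the three maps in the triangle identified as weak equivalences, the two-out-of-three property in the Contravariant model structure on $C/A$ forces $g\colon(Y,pg)\to(Z,p)$ to be a contravariant equivalence. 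Since $A$ and $p$ were arbitrary, this shows $g$ is $\mathcal I$-final.

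The argument is short and I do not expect any serious obstacle. The only point demanding care is the bookkeeping of the structure maps to $A$: one must check that the displayed triangle genuinely commutes \emph{in} $C/A$, and that the two applications of finality are taken over the \emph{same} base $A$ with the compatible structure maps $pg$ and $p$, so that the maps witnessed by finality of $f$ and of $gf$ are exactly the two legs of the factorization. Once this matching is arranged, the conclusion is immediate from two-out-of-three, which is available because each slice $C/A$ carries a genuine model structure by Theorem \ref{relativerighthm}.
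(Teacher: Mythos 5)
Your proof is correct and follows essentially the same route as the paper: both fix an arbitrary $p\colon Z\to A$, observe that finality of $f$ (applied to the structure map $pg$) and of $gf$ (applied to $p$) make two legs of the triangle in $C/A$ contravariant equivalences, and conclude by two-out-of-three in the slice model structure of Theorem \ref{relativerighthm}. The bookkeeping of structure maps that you flag is exactly right and is the only content of the argument.
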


\begin{proof}
Suppose we have a composable sequence
\[
X\xrightarrow{f} Y \xrightarrow{g} Z
\]
and assume that $f$ is $\mathcal I$-final. Consider any morphism $Z\to A$, and consider $g$ and $gf$ as morphisms in $C/A$. Then by the 2-out-of-3 property of weak equivalences is is clear that $g$ is a Contravariant equivalence in $C/A$ if and only if $gf$ is. Thus the $\mathcal I$-final maps satisfy the right cancellation property.\end{proof}

\begin{sub}
The next Proposition shows, that the $\mathcal I$-final and $\mathcal I$-initial maps are completely determined by the elementary homotopical datum $\mathcal I$.
\end{sub}

\begin{prop}\label{leftcancel}
A map is $\mathcal I$-final if and only if it can be factorized as a right $\mathcal I$-anodyne extension followed by a map in the class $\mathcal R$.
\end{prop}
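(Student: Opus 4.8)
The plan is to prove both implications. The forward direction (a factorization of the stated form implies finality) is immediate, while the converse is the substantive claim; it too, however, follows quickly from the results already in place, so the real work has been front-loaded into Lemma \ref{finalfibrations} and the right cancellation property.

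For the easy direction, I would suppose $f$ factors as $f = p \circ j$ with $j$ a right $\mathcal I$-anodyne extension and $p \in \mathcal R$. By Lemma \ref{finalfibrations}, every right $\mathcal I$-anodyne extension is $\mathcal I$-final, and every map in $\mathcal R$ is $\mathcal I$-final as well. Since the $\mathcal I$-final maps are closed under composition — a direct consequence of the $2$-out-of-$3$ property of contravariant equivalences in each slice $C/A$ — the composite $f$ is $\mathcal I$-final.

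For the converse, suppose $f\colon X \to Y$ is $\mathcal I$-final. I would apply the small object argument to factor $f$ as
\[
X \xrightarrow{\ j\ } Z \xrightarrow{\ p\ } Y,
\]
where $j$ is a right $\mathcal I$-anodyne extension and $p$ is a right $\mathcal I$-fibration. It then remains only to show that $p$ lies in $\mathcal R$. Since $j$ is right $\mathcal I$-anodyne it is $\mathcal I$-final, so the right cancellation property established in the preceding Proposition yields that $p$ is $\mathcal I$-final precisely when $p \circ j = f$ is; as $f$ is $\mathcal I$-final by hypothesis, $p$ is $\mathcal I$-final. Finally, $p$ is a right $\mathcal I$-fibration which is $\mathcal I$-final, so Lemma \ref{finalfibrations} places it in $\mathcal R$, producing the required factorization.

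I do not expect a serious obstacle: once the preceding lemma and the right cancellation property are available, the argument is essentially bookkeeping. The one point deserving care is verifying that the small object argument genuinely applies to produce a right $\mathcal I$-anodyne first factor — this relies on the class of right $\mathcal I$-anodyne extensions being generated by a small set $\Lambda$, which is part of the definition of a right homotopical structure — after which the right cancellation step does all the remaining work.
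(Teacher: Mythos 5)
Your argument is correct and is essentially identical to the paper's own proof: both directions use Lemma \ref{finalfibrations} together with closure of $\mathcal I$-final maps under composition, and the converse factors $f$ by the small object argument and applies the right cancellation property to conclude that the fibration part is $\mathcal I$-final, hence in $\mathcal R$. No further comment is needed.
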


\begin{proof}
By the right cancellation property the class of $\mathcal I$-final maps is closed under composition. By Lemma \ref{finalfibrations} both right $\mathcal I$-anodyne extensions and maps in the class $\mathcal R$ are $\mathcal I$-final, hence their composition is $\mathcal I$-final. Conversely, suppose $f$ is an $\mathcal I$-final map. We may factorize $f=pi$ with $i$ a right $\mathcal I$-anodyne extension and $p$ a right $\mathcal I$-fibration. By the right cancellation property, $p$ is $\mathcal I$-final thus by Lemma \ref{finalfibrations} we have $p\in \mathcal R$.
\end{proof}

\begin{defn}
Let $p\colon X \to Y$ be a morphism and consider a diagram of the form
\[
\begin{tikzcd}
A' \rar{j}\dar & B' \rar \dar & X\dar{p}\\
A\rar{i} & B \rar & Y
\end{tikzcd}
\]
in which the squares are cartesian. Then $p$ is called \emph{$\mathcal I$-proper} if $j$ is $\mathcal I$-final whenever $i$ is $\mathcal I$-final. Dually, $p$ is called \emph{$\mathcal I$-smooth} if $j$ is $\mathcal I$-initial whenever $i$ is $\mathcal I$-initial.
\end{defn}

\begin{Remark}
Again, there is conflicting terminology in the literature. What we have called \emph{proper} agrees with the definitions of Cisinski \cite{cisinskibook} and Joyal \cite{joyalnotes}, \cite{joyalquadern} in the example of simplicial sets, while Lurie calls these morphisms \emph{smooth} in \cite{lurie}.
\end{Remark}

\begin{sub}
In some cases, right (resp. left) $\mathcal I$-fibrations provide examples of $\mathcal I$-smooth (resp. $\mathcal I$-proper) maps. Although it is not true general, that they are $\mathcal I$-smooth (resp. $\mathcal I$-proper), there is a particular class of left (resp. right) $\mathcal I$-anodyne extensions, which are always preserved by pullback along a right (resp. left) $\mathcal I$-fibration.
\end{sub}

\begin{lem}\label{deformright}
Any right deformation retract is a right $\mathcal I$-anodyne extension and any left deformation retract is a left $\mathcal I$-anodyne extension.
\end{lem}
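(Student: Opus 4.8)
The plan is to exhibit a right deformation retract $i\colon A\to X$ as a retract, in the arrow category of $C$, of the pushout--product map $\partial_1\boxtimes i$, and then to invoke the fact that the class $An^r(I)$ is saturated — being of the form $l(r(\Lambda))$ — and hence closed under retracts. Since $\partial_1\boxtimes i$ lies in $An^r(I)$ by the second axiom in the definition of a class of right $I$-anodyne extensions, this immediately yields that $i$ is right $I$-anodyne. The statement about left deformation retracts then follows from the dual construction, replacing $\partial_1$ by $\partial_0$ and appealing to the corresponding axiom for $An^l(I)$.

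To build the retract, write $P := \1\otimes X\sqcup_{\1\otimes A} I\otimes A$ for the source of $\partial_1\boxtimes i$, with pushout legs $u\colon \1\otimes X\to P$ and $v\colon I\otimes A\to P$, and let $r$ and $h$ be the retraction and homotopy witnessing that $i$ is a right deformation retract. I would set up the diagram
\[
\begin{tikzcd}
A \rar{\alpha} \dar[swap]{i} & P \rar{\beta} \dar{\partial_1\boxtimes i} & A \dar{i}\\
X \rar[swap]{\partial_0\otimes 1_X} & I\otimes X \rar[swap]{h} & X
\end{tikzcd}
\]
whose bottom composite is $h(\partial_0\otimes 1_X)=h_0=1_X$ by condition (2). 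For the top row I take $\alpha := v\circ(\partial_0\otimes 1_A)$ and let $\beta\colon P\to A$ be determined on the two legs by $\beta u := r$ and $\beta v := \sigma$ (the structure map $I\otimes A\to A$); these agree on $\1\otimes A$ since $ri=1_A$ and $\sigma\partial_1=1_A$, so $\beta$ is well defined, and $\beta\alpha=\sigma(\partial_0\otimes 1_A)=1_A$.

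It then remains to check that the two squares commute. For the left square, $(\partial_1\boxtimes i)\alpha=(1_I\otimes i)(\partial_0\otimes 1_A)=(\partial_0\otimes 1_X)i$ by naturality of $\partial_0$. For the right square I would verify $i\beta = h\circ(\partial_1\boxtimes i)$ leg by leg: on $\1\otimes X$ both sides equal $ir=h_1$, while on $I\otimes A$ the left side is $i\sigma$ and the right side is $h(1_I\otimes i)=\sigma\otimes i=i\sigma$. This last identity is precisely condition (3) in the definition of a right deformation retract, which is exactly what forces the $\partial_1$-end (rather than the $\partial_0$-end) to be the correct one. With both squares commuting and both horizontal composites being identities, $i$ is displayed as a retract of $\partial_1\boxtimes i$, and the argument concludes.

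The step requiring the most care is the assertion that $\partial_1\boxtimes i$ genuinely belongs to $An^r(I)$: the relevant axiom applies to $\partial_1\boxtimes j$ only for $j\in\mathcal L$, so one must know that the right deformation retract $i$ is itself a cofibration. This is automatic in the situations of interest, where $i$ is a split monomorphism, hence a monomorphism, and $\mathcal L$ is exactly the class of monomorphisms; in the abstract setting it is part of the standing hypothesis that the maps under consideration lie in $\mathcal L$. The only other point to watch is bookkeeping of the two ends of the cylinder — condition (3) records that the homotopy is constant on $A$, and it feeds the right-hand square only because we retract against the $\partial_1$-end, which is also the end along which $\partial_1\boxtimes(\cdot)$ is right $I$-anodyne.
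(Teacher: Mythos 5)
Your proof is correct and is essentially the paper's own argument: both exhibit $i$ as a retract of $\partial_1\boxtimes i$ via the map $\partial_0$ into the cylinder leg of the pushout and the retraction $(\sigma,r)$, with condition (3) making the right-hand square commute. Your closing remark about needing $i\in\mathcal L$ to apply the axiom for $\partial_1\boxtimes(\cdot)$ is a fair observation that the paper leaves implicit, and your resolution of it is adequate.
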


\begin{proof}
We only show the case of a right deformation retract. Thus, let $i\colon K\to L$ be a right deformation retract with retraction $r\colon L\to K$ and homotopy $h\colon I\otimes L \to L$ from $id_Y$ to $ir$ which is constant on $K$. We obtain a commutative diagram
\[
\begin{tikzcd}[column sep = huge]
K\rar{\partial_0} \dar[swap]{i}	 & I\otimes K \sqcup_{\1\otimes K} \1 \otimes L \rar{(\sigma, r)} \dar & K\dar{i} \\
L \rar{\partial_0} & I\otimes L \rar{h} & L
\end{tikzcd}
\]	
exhibiting $i$ as a retract of a right $\mathcal I$-anodyne extension.
\end{proof}

\begin{prop}\label{leftright}
Consider a Cartesian square
\[
\begin{tikzcd}
A\rar{j}\dar & X\dar{p}\\
B \rar{i} & Y.
\end{tikzcd}
\]
If $i$ is a right deformation retract and $p$ is a left $\mathcal I$-fibration, then $j$ is a right deformation retract. Dually, if $i$ is a left deformation retract and $p$ is a right $\mathcal I$-fibration, then $j$ is a left deformation retract.
\end{prop}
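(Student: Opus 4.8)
The plan is to prove the first statement and obtain the second by the dual argument, interchanging $\partial_0$ with $\partial_1$ and left with right $\mathcal I$-fibrations. Write the Cartesian square as
\[
\begin{tikzcd}
A\rar{j}\dar[swap]{q} & X\dar{p}\\
B \rar{i} & Y,
\end{tikzcd}
\]
so that $A = B\times_Y X$, and let $(r,h)$ exhibit $i$ as a right deformation retract: thus $r\colon Y\to B$ with $ri = id_B$, and $h\colon I\otimes Y\to Y$ with $h_0 = id_Y$, $h_1 = ir$, and $h(id_I\otimes i) = \sigma\otimes i$. The idea is to produce the homotopy $\bar h\colon I\otimes X\to X$ by lifting $h$ through $p$ while keeping it constant on the fibre $A$, and then to read off the retraction $\bar r\colon X\to A$ from its endpoint.

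The point is that constancy of $h$ on $B$ lets me prescribe $\bar h$ on $A$ in advance. Indeed, $\sigma\otimes j = j\sigma_A\colon I\otimes A\to X$ is a lift of $h(id_I\otimes pj)$ along $p$: one computes $p(\sigma\otimes j) = iq\sigma_A = h(id_I\otimes i)(id_I\otimes q) = h(id_I\otimes pj)$, using $pj = iq$, the constancy $h(id_I\otimes i) = \sigma\otimes i$, and naturality of $\sigma$. Moreover $(\sigma\otimes j)(\partial_0\otimes id_A) = j$, so $\sigma\otimes j$ agrees with $id_X$ at the $\partial_0$-end. Hence I obtain a commutative square
\[
\begin{tikzcd}[column sep=huge]
\{0\}\otimes X\sqcup_{\{0\}\otimes A} I\otimes A \rar{(id_X,\,\sigma\otimes j)}\dar[swap]{\partial_0\boxtimes j} & X\dar{p}\\
I\otimes X\rar[swap]{h(id_I\otimes p)}\urar[dashed]{\bar h} & Y,
\end{tikzcd}
\]
whose left-hand map is the pushout-product $\partial_0\boxtimes j$. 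By Lemma \ref{deformright} the map $i$ lies in $\mathcal L$, and its base change $j$ lies in $\mathcal L$ as well; thus $\partial_0\boxtimes j$ is a left $\mathcal I$-anodyne extension by the defining axiom, and since $p$ is a left $\mathcal I$-fibration a lift $\bar h$ exists. By construction $\bar h$ satisfies $\bar h_0 = id_X$, the constancy $\bar h(id_I\otimes j) = \sigma\otimes j$, and $p\bar h = h(id_I\otimes p)$.

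What remains is a routine diagram chase. Put $s := \bar h_1$; then $ps = h_1 p = irp$, so the cone $(rp,\,s)$ is compatible over $Y$ and the universal property of the pullback yields $\bar r\colon X\to A$ with $q\bar r = rp$ and $j\bar r = s = \bar h_1$, which is axiom (2). For axiom (1) I check $\bar r j = id_A$ on the two projections: $q\bar r j = rpj = riq = q$ using $ri = id_B$, while $j\bar r j = \bar h_1 j = \bar h(id_I\otimes j)(\partial_1\otimes id_A) = (\sigma\otimes j)(\partial_1\otimes id_A) = j$, using naturality of $\partial_1$, the constancy of $\bar h$ on $A$, and $\sigma\partial_1 = id$. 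Axiom (3) is precisely the constancy $\bar h(id_I\otimes j) = \sigma\otimes j$ built into the lift. Thus $j$ is a right deformation retract.

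The one genuinely delicate input — and the main obstacle — is the anodyne-ness of $\partial_0\boxtimes j$, i.e.\ that the base change $j$ of $i$ still lies in $\mathcal L$ (in the examples $\mathcal L$ consists of monomorphisms, which are stable under pullback, so this is automatic). Once this is secured, the crucial structural observation is that $\sigma\otimes j$ is the correct partial lift on $I\otimes A$: it is exactly this choice that forces $\bar h$ to be constant on the fibre and makes every subsequent identity formal. The dual statement follows by running the identical argument with $\partial_1$ in place of $\partial_0$ and a right $\mathcal I$-fibration in place of $p$.
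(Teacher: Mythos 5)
Your proof is correct and follows essentially the same route as the paper: the identical lifting problem against $\partial_0\boxtimes j$ with partial lift $(id_X,\,\sigma\otimes j)$ over $h(id_I\otimes p)$, followed by extracting the retraction from the endpoint $\bar h_1$ via the universal property of the pullback. You merely spell out the endpoint verifications (and flag the pullback-stability of $\mathcal L$ needed for $\partial_0\boxtimes j$ to be anodyne) that the paper leaves implicit.
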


\begin{proof}
We only show the case when $p$ is a left $\mathcal I$-fibration and $i$ is a right deformation retract. Suppose we have a retraction $r\colon Y \to B$ and a homotopy $h\colon I\otimes Y \to Y$ from $id_Y$ to $ir$ which is constant on $B$. We obtain a solution $k$ in the following lifting problem
\[
\begin{tikzcd}[column sep = huge]
I\otimes A \sqcup_{\{0\}\otimes A} \{0\}\otimes X \rar{(\sigma \otimes j, id_X)}\dar & X\dar{p}\\
I\otimes X \rar{h(id_I \otimes p)}\urar[dashed]{k} & Y
\end{tikzcd}
\]
since $p$ is a left $\mathcal I$-fibration and the left vertical map is left $\mathcal I$-anodyne. We claim that $k$ exhibits $j$ as a deformation retract. We have a map
\[
X\xrightarrow{p} Y \xrightarrow{r} B \xrightarrow{i} Y
\]
and also
\[
X\xrightarrow{k_1} X \xrightarrow{p} Y.
\]
Since $pk_1=irp$ we get a unique map $s\colon X \to A$. Now we have $jsj = k_1 j = j$ and $qsj = rpj=q$ hence $sj= id_A$. Finally one checks that the homotopy $k$ satisfies the right properties.
\end{proof}

\section{Examples}\label{exa}

\begin{sub}
We will consider two examples in this section, the Covariant and Contravariant model structures for simplicial sets and the coCartesian and Cartesian model structures for marked simplicial sets. We will only consider the Contravariant and Cartesian model structures and we will show that both are examples of Contravariant model structures arising from an elementary homotopical datum as introduced in this article.

The Contravariant model structure for simplicial sets is originally due to Joyal, see for example \cite{joyalnotes}, and is obtained using purely combinatorial methods. Lurie gives an alternative construction of these model structures in \cite{lurie}, using a comparison to simplicial categories. Another approach is in Cisinski's book \cite{cisinskibook}, using his theory of \emph{anodyne extensions} which is our starting point.

The coCartesian model structure for marked simplicial sets was introduced by Lurie in \cite{lurie}. We give a new proof for its existence using our theory of Contravariant model structures. In particular, we obtain a description of fibrations between fibrant objects for the coCartesian model structure.
\end{sub}

\subsection{The Contravariant model structure for simplicial sets}

\begin{sub}
Consider the representable simplicial set $\Delta^1$. Again, the inclusion of the endpoints is disjoint, hence by Example \ref{separatingsegment} we obtain the exact cylinder
\[
\Delta^1 \times (\cdot) \colon \mathbf{sSet}\to \mathbf{sSet}
\]
with respect to (Mono,Triv). Consider the elementary homotopical datum $\mathcal I := (\Delta^1, \emptyset)$. By Example \ref{segmentgenerators} the right $\mathcal I$-anodyne extensions are precisely the saturated class generated by
\[
\Delta^1 \times \partial \Delta^n \cup \1 \times \Delta^n \to \Delta^1 \times \Delta^n
\]
for $n \geq 0$. In fact, this is a familiar class.
\end{sub}

\begin{lem}
The following sets of morphisms generate the same saturated class.
\begin{enumerate}
	\item $\Delta^1 \times \partial \Delta^n \cup \1 \times \Delta^n \to \Delta^1 \times \Delta^n$ for $n\geq 0$,
	\item $\Lambda^n_k \to \Delta^n$ for $0<k\leq n$.
\end{enumerate}
\end{lem}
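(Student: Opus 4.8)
The plan is to prove that (1) and (2) generate the same saturated class by the standard closure device: since forming the saturated class generated by a set is a closure operator, it suffices to show that each generating family is contained in the saturated class generated by the other. Write $\mathbf A$ for the saturated class generated by the maps in (1) and $\mathbf R$ for the one generated by the horn inclusions in (2). By the discussion preceding the lemma (Example \ref{segmentgenerators}) together with Notation \ref{boxproduct}, the family (1) is precisely $\{\partial_1\boxtimes(\partial\Delta^n\to\Delta^n)\}_{n\ge0}$, so $\mathbf A$ is the class of right $\mathcal I$-anodyne extensions; by Lemma \ref{smallanodyne} it is therefore genuinely a class of right $\mathcal I$-anodyne extensions. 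In particular $\mathbf A$ is closed under retracts and, by the defining axioms of such a class, contains $\partial_1\boxtimes i$ for every monomorphism $i$.

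For the inclusion $\mathbf R\subseteq\mathbf A$ I would argue by a retraction. Fix $0<k\le n$. Since $\Lambda^n_k\to\Delta^n$ is a monomorphism, the map $\partial_1\boxtimes(\Lambda^n_k\to\Delta^n)$, namely $(\1\times\Delta^n)\cup(\Delta^1\times\Lambda^n_k)\to\Delta^1\times\Delta^n$, lies in $\mathbf A$. I then exhibit $\Lambda^n_k\to\Delta^n$ as a retract of this map. Define $s\colon\Delta^n\to\Delta^1\times\Delta^n$ to be the bottom inclusion $\{0\}\times(\cdot)$ and $r\colon\Delta^1\times\Delta^n\to\Delta^n$ by the (order-preserving) vertex assignment $r(\varepsilon,i)=i$ for $\varepsilon=0$ and $r(\varepsilon,i)=\max(i,k)$ for $\varepsilon=1$. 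One checks that $rs=\mathrm{id}$, that $s$ carries $\Lambda^n_k$ into $(\1\times\Delta^n)\cup(\Delta^1\times\Lambda^n_k)$, and, using $k\ge1$, that $r$ carries this subobject into $\Lambda^n_k$. These data assemble into a retract diagram of arrows, so $\Lambda^n_k\to\Delta^n\in\mathbf A$ because $\mathbf A$ is retract-closed.

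The substantial direction is $\mathbf A\subseteq\mathbf R$, for which I must show that each generator $(\1\times\Delta^n)\cup(\Delta^1\times\partial\Delta^n)\to\Delta^1\times\Delta^n$ of (1) lies in $\mathbf R$. Here I would use the simplicial decomposition of the prism $\Delta^1\times\Delta^n$ into its $n+1$ nondegenerate $(n+1)$-simplices, the shuffles, indexed by the position of the unique jump in the $\Delta^1$-coordinate. Ordering the shuffles suitably and attaching them one at a time, I would verify that each attachment is cocartesian along a right horn inclusion $\Lambda^{n+1}_j\to\Delta^{n+1}$ with $0<j\le n+1$: at each stage every face of the newly attached top simplex except one already lies in the subobject built so far (coming from $\Delta^1\times\partial\Delta^n$, from $\1\times\Delta^n$, or from previously attached shuffles), and the missing face has the index making the horn a right horn precisely because we included the endpoint $\1$ rather than $\{0\}$. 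Composing these pushouts exhibits the generator as a finite composite of pushouts of maps in (2), hence an element of $\mathbf R$.

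The main obstacle is the bookkeeping in this last step: pinning down the correct linear order on the shuffles, checking that at each stage exactly one face of the incoming simplex is new, and confirming that the resulting horn index lands in the range $0<j\le n+1$. This is a purely combinatorial verification on the poset $[1]\times[n]$, and once it is carried out the two inclusions together yield $\mathbf A=\mathbf R$.
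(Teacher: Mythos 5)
Your argument is correct, but note that the paper does not actually prove this lemma --- it simply cites \cite[Lemma 3.1.3]{cisinskibook} and Lurie; what you have written out is precisely the standard proof found in those references. Your retract argument for $\mathbf R\subseteq\mathbf A$ is complete and correct: $r(\varepsilon,i)=i$ for $\varepsilon=0$ and $r(\varepsilon,i)=\max(i,k)$ for $\varepsilon=1$ is order-preserving, restricts $\1\times\Delta^n$ into the face spanned by $\{k,\dots,n\}$ (which lies in $\Lambda^n_k$ exactly because $k\geq 1$), and sends $\Delta^1\times\Lambda^n_k$ into $\Lambda^n_k$ since the image vertex set of any simplex is contained in its second-coordinate set union $\{k\}$. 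As for the ``main obstacle'' you flag in the direction $\mathbf A\subseteq\mathbf R$, the bookkeeping does work out exactly as you predict and is worth recording: letting $\sigma_j$ ($0\leq j\leq n$) be the shuffle with vertices $(0,0),\dots,(0,j),(1,j),\dots,(1,n)$, one attaches the $\sigma_j$ in order of \emph{increasing} $j$ to $\Delta^1\times\partial\Delta^n\cup\1\times\Delta^n$; the faces $d_i\sigma_j$ for $i<j$ or $i>j+1$ lie in $\Delta^1\times\partial\Delta^n$ (they miss a second coordinate), $d_j\sigma_j$ equals $d_j\sigma_{j-1}$ for $j\geq 1$ and $d_0\sigma_0$ lies in $\1\times\Delta^n$, so the unique missing face is $d_{j+1}\sigma_j$ and each attachment is a pushout of $\Lambda^{n+1}_{j+1}\to\Delta^{n+1}$ with $0<j+1\leq n+1$, i.e.\ a right horn --- confirming your observation that choosing the endpoint $\1$ rather than $\{0\}$ is what forces the horn index into the correct range.
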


\begin{proof}
See for example \cite[Lemma 3.1.3]{cisinskibook}, [Lurie].
\end{proof}

\begin{sub}
In other words, our right $\mathcal I$-anodyne extensions are precisely the right anodyne extensions of simplicial sets. Dually, the left $\mathcal I$-anodyne extensions are precisely the left anodyne extensions of simplicial sets. We obtain Covariant and Contravariant model structures for the elementary homotopical datum given by $\mathcal I$. In this case, the model structures in families will be important for us. Applying Theorem \ref{relativerighthm} we obtain the following.
\end{sub}

\begin{thm}\label{covariant}
Let $A$ be a simplicial set. There exists a unique model structure on $\mathbf{sSet}/A$ with cofibrations the monomorphisms and fibrant objects the right fibrations of simplicial sets with target $A$. Dually, there exists a unique model structure on $\mathbf{sSet}/A$ with cofibrations the monomorphisms and fibrant objects the left fibrations of simplicial sets with target $A$.
\end{thm}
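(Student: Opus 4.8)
The plan is to obtain Theorem \ref{covariant} as a direct specialization of the relative existence result Theorem \ref{relativerighthm}, instantiated with $C = \mathbf{sSet}$, the weak factorization system $(\mathrm{Mono}, \mathrm{Triv})$, and the elementary homotopical datum $\mathcal I = (\Delta^1, \emptyset)$ whose exact cylinder is $\Delta^1 \times (\cdot)$. That theorem already produces, for every simplicial set $A$, a unique model structure on $\mathbf{sSet}/A$ whose cofibrations form the class $\mathcal L_A$ and whose fibrant objects are the right $I$-fibrations with target $A$; uniqueness is therefore inherited, and the entire task reduces to translating these two abstract classes into the stated concrete ones.

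For the cofibrations, I would invoke the definition of $\mathcal L_A$ from Construction \ref{relhomotopicaldatum}: a morphism of $\mathbf{sSet}/A$ lies in $\mathcal L_A$ exactly when its underlying map of simplicial sets is a monomorphism. Since the forgetful functor $\mathbf{sSet}/A \to \mathbf{sSet}$ reflects and creates monomorphisms --- a map over $A$ is monic in the slice if and only if it is levelwise injective --- the class $\mathcal L_A$ is precisely the class of monomorphisms of $\mathbf{sSet}/A$.

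For the fibrant objects, an object $p \colon X \to A$ is fibrant if and only if its map to the terminal object $\mathrm{id}_A$ is a right $I_A$-fibration, i.e.\ if and only if $p$ has the right lifting property against every morphism of $\mathbf{sSet}/A$ whose underlying map is right $I$-anodyne. Transposing each such lifting square across the slice, this is equivalent to $p$ having the right lifting property in $\mathbf{sSet}$ against all right $I$-anodyne extensions. By the Lemma immediately preceding this theorem these coincide with the saturated class generated by the horn inclusions $\Lambda^n_k \to \Delta^n$ for $0 < k \leq n$, so the lifting property is exactly the defining property of a right fibration of simplicial sets. The dual assertion follows verbatim, replacing the right homotopical structure by the left one and the horns with $0 < k \leq n$ by those with $0 \leq k < n$.

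Since the proof is essentially a dictionary between the abstract and concrete vocabularies, I anticipate no genuine obstacle. The one point deserving care is the transposition identifying right $I_A$-fibrations over the terminal object with right $I$-fibrations in $\mathbf{sSet}$: it rests on the observation that a lifting problem in $\mathbf{sSet}/A$ against a right $I_A$-anodyne map carries exactly the same data as a lifting problem in $\mathbf{sSet}$ against its underlying right $I$-anodyne map, so no information is lost in passing between the slice and the base.
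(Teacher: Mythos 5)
Your proposal is correct and follows exactly the route the paper takes: Theorem \ref{covariant} is obtained by specializing Theorem \ref{relativerighthm} to the elementary homotopical datum $(\Delta^1,\emptyset)$, with the identification of the right $\mathcal I$-anodyne extensions with the right anodyne extensions (the saturated class generated by $\Lambda^n_k\to\Delta^n$, $0<k\leq n$) supplied by the lemma preceding the theorem. The translation of $\mathcal L_A$ into monomorphisms and the transposition of lifting problems between the slice and the base are exactly the dictionary the paper implicitly relies on.
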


\begin{sub}
Thus, we recover Joyal's Covariant and Contravariant model structures.
\end{sub}

\begin{defn}
We will denote the Contravariant model structure above by $\mathbf{RFib}(A)$ and the Covariant model structure by $\mathbf{LFib(A)}$.
\end{defn}

\begin{Remark}
It is possible to construct the Co- and Contravariant model structure using anodyne extensions, see Remark \ref{plainanodyne}, instead of left and right ones. We can consider the cylinder $J\times (\cdot)$ on simplicial sets, where $J$ is the nerve of the free walking isomorphism, and the elementary homotopical datum given by $J$ and the outer horn inclusions
\[
\Lambda^n_k \to \Delta^n 
\]
where $0<k\leq n$. One then has to check that the class of anodyne extensions associated to this elementary homotopical datum is indeed the class of right anodyne extensions. This is carried out in \cite[Chapter 4]{cisinskibook}.
\end{Remark}

\begin{sub}
The previous section shows that we obtain abstractly a notion of final and initial maps. We translate this to the following definition.
\end{sub}

\begin{defn}
A map of simplicial sets $f\colon X \to Y$ is \emph{final} if and only if for all simplicial sets $A$ and all maps $Y\to A$, the morphism $f$ is a Contravariant equivalence in $\mathbf{RFib}(A)$. Dually, the map $f$ is called \emph{initial} if it induces a Covariant equivalence in $\mathbf{LFib}(A)$.
\end{defn}

\begin{sub}
We may also consider smooth and proper maps in this setting. Recall from the previous section that a map $p\colon X\to Y$ is proper if and only if for any diagram
\[
\begin{tikzcd}
A' \rar{w} \dar & B'\rar\dar & X\dar{p}\\
A\rar{v} & B \rar& Y
\end{tikzcd}
\]
in which the squares are pullbacks, the map $w$ is final if $v$ is final. Dually, $p$ is called smooth if the map $w$ is initial whenever $v$ is initial.
\end{sub}

\begin{prop}\label{properleft}
Left fibrations are proper and right fibrations are smooth.
\end{prop}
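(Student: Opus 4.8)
The statement is self-dual under interchanging left/right and initial/final, so I would only treat properness of a left fibration $p\colon X\to Y$; smoothness of right fibrations then follows by the dual argument using the second case of Proposition \ref{leftright}. Absorbing the right-hand pullback square of the defining diagram, the claim reduces to showing that for a left fibration $p\colon X\to Y$ and a final map $v\colon A\to Y$, the base change $w\colon A\times_Y X\to X$ is again final. By Proposition \ref{leftcancel} I may factor $v$ as a right $\mathcal I$-anodyne extension followed by a map in $\mathcal R$. Maps in $\mathcal R$ are stable under base change and are final by Lemma \ref{finalfibrations}, and finality is closed under composition, so it suffices to treat the base change of a right anodyne extension. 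Since a base change of a monomorphism is a monomorphism, and a monomorphism is final if and only if it is right anodyne (Lemma \ref{finalfibrations}), the whole proposition comes down to the following: \emph{the base change of a right anodyne extension along a left fibration is right anodyne.}

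To prove this I would fix $p$ and consider the class $\mathcal S$ of monomorphisms $s$ over $Y$ whose base change $p^\ast s$ is right anodyne. Because $\mathbf{sSet}$ is locally cartesian closed, $p^\ast$ preserves all colimits, and the right anodyne extensions form a saturated class, so $\mathcal S$ is saturated; it therefore suffices to show $\mathcal S$ contains the generating box maps $b_n\colon \Delta^1\times\partial\Delta^n\cup\1\times\Delta^n\to\Delta^1\times\Delta^n$ (equipped with arbitrary structure maps to $Y$). Here I would \emph{not} try to exhibit $b_n$ itself as a deformation retract, because it is not one; instead I would exploit the two genuine right deformation retracts $\1\times\Delta^n\hookrightarrow\Delta^1\times\partial\Delta^n\cup\1\times\Delta^n$ and $\1\times\Delta^n\hookrightarrow\Delta^1\times\Delta^n$, both contracted by the homotopy sending $(s,(t,x))$ to $(\max(s,t),x)$, which is constant on $\1\times\Delta^n$. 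Writing $E$ for $X$ pulled back to $\Delta^1\times\Delta^n$, Proposition \ref{leftright} turns both deformation retracts, after base change along the left fibration $E\to\Delta^1\times\Delta^n$, into right deformation retracts, hence into right anodyne extensions by Lemma \ref{deformright}. Letting $E_1$ and $E_{b_n}$ denote the pullbacks of $E$ to $\1\times\Delta^n$ and to the source of $b_n$, the first gives that $E_1\to E_{b_n}$ is final and the second that $E_1\to E$ is final; by the right cancellation property of final maps, $p^\ast b_n\colon E_{b_n}\to E$ is final, and being a monomorphism it is right anodyne. Thus every generator lies in $\mathcal S$ and the key statement follows.

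The main obstacle is exactly that the box generators are not deformation retracts, so Proposition \ref{leftright} cannot be applied to them directly; the device that rescues the argument is to factor an honest deformation retract through $b_n$ and then appeal to the right cancellation property of \emph{finality} (right anodyne extensions by themselves do not satisfy right cancellation). I expect the only remaining bookkeeping to be verifying that the homotopy $(s,(t,x))\mapsto(\max(s,t),x)$ restricts correctly to the subobject $\Delta^1\times\partial\Delta^n\cup\1\times\Delta^n$ and meets the constancy condition in the definition of a right deformation retract, and confirming that $p^\ast$ of each generator is indeed the base change of $b_n$ along a left fibration over $\Delta^1\times\Delta^n$.
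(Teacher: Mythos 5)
Your argument is correct and follows essentially the same route as the paper: reduce to the box generators, observe that $\1\times\Delta^n\hookrightarrow\Delta^1\times\Delta^n$ (and, in your variant, also $\1\times\Delta^n\hookrightarrow\Delta^1\times\partial\Delta^n\cup\1\times\Delta^n$) is a right deformation retract, transport this across the left fibration via Proposition \ref{leftright} and Lemma \ref{deformright}, and conclude by right cancellation. One small correction to your closing aside: right anodyne extensions \emph{do} satisfy right cancellation among monomorphisms --- this is precisely Lemma \ref{finalfibrations} combined with the cancellation property of final maps, and is what the paper invokes for its class $\mathcal A$ --- so your detour through finality is not a rescue device but the same mechanism made explicit.
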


\begin{proof}
We only show the case of left fibrations. Since left fibrations are closed under pullback, it suffices to show that for any cartesian square
\[
\begin{tikzcd}
A'\rar{w}\dar & X\dar{p}\\
A\rar{v} & Y
\end{tikzcd}
\]
in which $v$ is final and $p$ is a left fibration, the map $w$ is final. Since any final map can be factorized as a right anodyne map followed by a trivial fibration and trivial fibrations are closed under pullback, it suffices to show that $w$ is right anodyne whenever $v$ is right anodyne. Let $\mathcal A$ be the class of morphisms whose pullbacks are right anodyne. Then this class is saturated and satisfies the right cancellation property, since this is true for right anodyne extensions. Thus it suffices to show that $\mathcal A$ contains the class of right anodyne extensions, and hence it suffices to show the assertion when $v$ is an element of the generating set of right anodyne extension. We have already seen that the right anodyne extensions are the saturated class generated by 
\[	
\Delta^1 \times \partial \Delta^n \cup \1 \times \Delta^n \to \Delta^1 \times \Delta^n
\]
for $n\geq 0$. By the right cancellation property, it suffices to show the assertion for morphisms of the form
\[
\1 \times K \to \Delta^1 \times K
\]
for any simplicial set $K$. We now observe that the above map is a right deformation retract, hence by Proposition \ref{leftright} its pullback along any left fibration is a right deformation retract, hence right anodyne.
\end{proof}

\subsection{The Cartesian model structure}

\begin{sub}
We list some basic properties of Cartesian fibrations from \cite{lurie} and recall some basic facts about the category of marked simplicial sets.
\end{sub}

\begin{prop}\label{cartedge}
Let $p\colon X \to A$ be an inner fibration of simplicial sets and let $f\colon x\to y \in X$ be an edge. Then the following are equivalent.

\begin{enumerate}
	\item The induced map
	\[
	X_{/f} \to X_{/x}\times_{A_{/p(x)}} A_{/p(f)} 
	\]
	is a trivial fibration.
	\item For all $n \geq 2$ and all lifting problems of the form
	\[
	\begin{tikzcd}
	\Delta^{\{n-1,n\}}\drar{f}\dar & \\
	\Lambda^n_n \rar \dar & X\dar{p}\\
	\Delta^n \rar \urar[dashed] & A.
	\end{tikzcd}
	\]
	there exists a lift as indicated.
	\item For all $n \geq 1$ and all lifting problems of the form
	\[
	\begin{tikzcd}
	\Delta^1\times \{1\} \dar \drar{f} & \\
	\Delta^1 \times \partial \Delta^n \cup \{1\} \times \Delta^n \rar \dar & X\dar{p}\\
	\Delta^1 \times \Delta^n \rar\urar[dashed] & A
	\end{tikzcd}
	\]
	there exists a lift as indicated.
\end{enumerate}
\end{prop}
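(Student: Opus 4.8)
The plan is to establish the chain of equivalences $(1)\Leftrightarrow(2)\Leftrightarrow(3)$, where the first equivalence is a formal consequence of the join--slice adjunction and the second is a combinatorial comparison of two saturated classes of monomorphisms, carried out while exploiting that $p$ is already assumed to be an inner fibration (so that all inner horns can be filled for free). Throughout, the distinguished edge of each prism or simplex is constrained to be $f$, so the correct statement to prove is an equivalence of \emph{relative} lifting conditions.

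For $(1)\Leftrightarrow(2)$ I would unwind the meaning of trivial fibration: the map $X_{/f}\to X_{/x}\times_{A_{/p(x)}}A_{/p(f)}$ is a trivial fibration precisely when it has the right lifting property against every boundary inclusion $\partial\Delta^m\hookrightarrow\Delta^m$. Using the adjunction between the join functor $(-)\star\Delta^1$ and the over-slice $X_{/(-)}$, a map $\Delta^m\to X_{/f}$ corresponds to a map $\Delta^m\star\Delta^1\to X$ restricting to $f$ on the $\Delta^1$-factor; since $\Delta^m\star\Delta^1\cong\Delta^{m+2}$, the inclusion $\partial\Delta^m\hookrightarrow\Delta^m$ is carried to the last-vertex horn $\Lambda^{m+2}_{m+2}\hookrightarrow\Delta^{m+2}$ with its edge $\Delta^{\{m+1,m+2\}}$ sent to $f$. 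The fibre product over $A_{/p(f)}$ on the target records exactly the compatibility with $p$ and the prescribed data on $A$. Setting $n=m+2\geq 2$ then translates the trivial-fibration condition verbatim into the lifting problems of (2), giving the equivalence.

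For $(2)\Leftrightarrow(3)$ I would compare the saturated classes, relative to the constraint that the distinguished edge be $f$, generated respectively by the last-vertex horns $\Lambda^n_n\hookrightarrow\Delta^n$ and by the inclusions $\Delta^1\times\partial\Delta^n\cup\{1\}\times\Delta^n\hookrightarrow\Delta^1\times\Delta^n$. The tool is the standard shuffle triangulation of the prism $\Delta^1\times\Delta^n$ by its nondegenerate $(n+1)$-simplices. Attaching these simplices one at a time in a suitable order realizes the inclusion of (3) as a finite composite of pushouts of horn inclusions $\Lambda^{n+1}_j\hookrightarrow\Delta^{n+1}$. Since $p$ is an inner fibration, every inner horn ($0<j<n+1$) fills automatically, so the only content is that the outer horns that occur are of last-vertex type with distinguished edge $f$; these are governed exactly by the hypotheses of (2). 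Running the comparison in the other direction, by exhibiting the last-vertex horns of (2) as retracts of the prism inclusions of (3) (again up to inner-horn fillings supplied by $p$), yields the reverse implication.

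The hard part will be the combinatorial bookkeeping in $(2)\Leftrightarrow(3)$: one must fix an explicit linear order on the shuffles of $\Delta^1\times\Delta^n$ so that each successive attachment is genuinely a pushout of a \emph{single} horn, and then identify, for each such horn, its missing face and hence whether it is inner or outer. The delicate point is checking that the outer horns are precisely the last-vertex ones and that their marked edge is the prescribed $f$; once this is verified, the inner fibration hypothesis disposes of all remaining lifting problems and the equivalence follows. By contrast, the adjunction step $(1)\Leftrightarrow(2)$ is routine once the join--slice correspondence has been set up.
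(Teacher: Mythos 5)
The paper gives no argument for this proposition --- its ``proof'' is a pointer to \cite{lurie} (Definition 2.4.1.1, Remark 2.4.1.4 and Proposition 2.4.1.8) --- so your proposal has to be measured against those cited results, and its two-step architecture matches theirs. The step $(1)\Leftrightarrow(2)$ is correct as you describe it: a lifting problem for $\partial\Delta^m\hookrightarrow\Delta^m$ against $X_{/f}\to X_{/x}\times_{A_{/p(x)}}A_{/p(f)}$ transposes under the join--slice adjunction to a lifting problem for $p$ against $\partial\Delta^m\star\Delta^1\cup\Delta^m\star\{1\}=\Lambda^{m+2}_{m+2}\hookrightarrow\Delta^m\star\Delta^1\cong\Delta^{m+2}$ with $\Delta^{\{m+1,m+2\}}$ sent to $f$. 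The direction $(2)\Rightarrow(3)$ via the shuffle filtration is also sound: writing $Y_k$ for the union of $\Delta^1\times\partial\Delta^n\cup\{1\}\times\Delta^n$ with the first $k$ shuffles, each inclusion $Y_k\subseteq Y_{k+1}$ is a pushout of $\Lambda^{n+1}_{k+1}\hookrightarrow\Delta^{n+1}$; these are inner for $k<n$, and the single outer horn $\Lambda^{n+1}_{n+1}$ occurring at the last step has critical edge $(0,n)\to(1,n)=\Delta^1\times\{n\}$, which is precisely the edge constrained to map to $f$.

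The gap is in $(3)\Rightarrow(2)$. The retract you invoke does not exist. The only nondegenerate $(n+1)$-simplex of $\Delta^1\times\Delta^n$ whose edge $\Delta^{\{n,n+1\}}$ is the last vertical edge (as the constraint on $f$ forces) is the last shuffle $\sigma_n=[(0,0),\dots,(0,n),(1,n)]$, and $\sigma_n$ carries the face $d_n\Delta^n\subseteq\Lambda^{n+1}_{n+1}$ to the simplex $[(0,0),\dots,(0,n-1),(1,n)]$, which is \emph{not} contained in $\Delta^1\times\partial\Delta^n\cup\{1\}\times\Delta^n$. Consequently a filler of the prism, restricted along $\sigma_n$, need not agree with the given horn on the face $d_n$, and ``inner-horn fillings supplied by $p$'' cannot repair this: the two candidate $n$-simplices agree on their boundary and over $A$, and identifying them requires exactly the homotopy-uniqueness of fillers that one is trying to establish. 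The usual alternative --- upgrading the generators of $(3)$ from boundary inclusions to arbitrary monomorphisms, so as to use the prism over the pair $(\Delta^{n+1},\Lambda^{n+1}_{n+1})$, of which the horn genuinely is a retract --- is blocked by the side condition: the constraint in $(3)$ concerns one specific vertical edge, and the skeletal induction produces prism problems whose constrained edge is not the one mapping to $f$. This direction therefore needs a genuinely different argument (it is where the real content of Lurie's Proposition 2.4.1.8 lies), and your sketch does not yet contain one.
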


\begin{proof}
Combine \cite[Definition 2.4.1.1]{lurie}, \cite[Remark 2.4.1.4]{lurie} and \cite[Proposition 2.4.1.8]{lurie}.
\end{proof}

\begin{defn}
Let $p\colon X\to A$ be an inner fibration. Then an edge $f\colon \Delta^1 \to X$ is called \emph{p-Cartesian} if it satisfies the equivalent conditions of the above Proposition.
\end{defn}

\begin{prop}\label{cartiso}
Let $p\colon C\to D$ be an inner fibration between $\infty$-categories and let $f\colon \Delta^1 \to C$ be an edge. Then the following are equivalent.
\begin{enumerate}
\item The edge $f$ is an equivalence in $C$.
\item The edge $f$ is $p$-Cartesian and its image $p(f)$ is an equivalence in $D$.
\end{enumerate}
\end{prop}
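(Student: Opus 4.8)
The plan is to establish the two implications separately, relying on the combinatorial characterizations of $p$-Cartesian edges in Proposition \ref{cartedge} and on the description, from the preceding subsection, of the generating right anodyne extensions as the maps $\Delta^1\times\partial\Delta^n\cup\{1\}\times\Delta^n\to\Delta^1\times\Delta^n$. Throughout I will use that an edge of $C$ is an equivalence precisely when it becomes invertible in the homotopy category $hC$, equivalently when it extends to a functor $J\to C$, where $J$ is the nerve of the free-living isomorphism.

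For the implication $(2)\Rightarrow(1)$ I would transport an inverse of $p(f)$ back along the Cartesian edge $f$. Choosing an inverse $\bar g\colon p(y)\to p(x)$ in $D$ together with a $2$-simplex witnessing $p(f)\circ\bar g\simeq\mathrm{id}_{p(y)}$, I apply condition (2) of Proposition \ref{cartedge} with $n=2$: the horn $\Lambda^2_2\to C$ with $\Delta^{\{1,2\}}\mapsto f$ and $\Delta^{\{0,2\}}\mapsto\mathrm{id}_y$, lying over the chosen $2$-simplex in $D$, admits a filler because $f$ is $p$-Cartesian. Its remaining face is an edge $g\colon y\to x$ with $p(g)=\bar g$ and $f\circ g\simeq\mathrm{id}_y$, so $g$ is a right inverse of $f$ in $hC$. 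Since $p(g)$ is a genuine inverse of $p(f)$, a further application of the Cartesian property (essential uniqueness of lifts) forces $g\circ f\simeq\mathrm{id}_x$; hence $f$ is invertible in $hC$ and so an equivalence.

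For $(1)\Rightarrow(2)$, that $p(f)$ is an equivalence is immediate, since composing an extension $J\to C$ of $f$ with $p$ extends $p(f)$ to a functor $J\to D$. The real content is that every equivalence $f$ is $p$-Cartesian, which I would phrase through condition (1) of Proposition \ref{cartedge}: one must show that the comparison map
\[
\theta_f\colon C_{/f}\longrightarrow C_{/x}\times_{D_{/p(x)}}D_{/p(f)}
\]
is a trivial fibration. As $\theta_f$ is always a right fibration, Proposition \ref{rfibtriv}, applied in the contravariant structure over the appropriate base, reduces this to showing that $\theta_f$ is a weak equivalence, i.e.\ that its fibers are contractible. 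This in turn expresses the statement that postcomposition with the equivalence $f$ induces equivalences on the relevant mapping spaces, which holds because $f$ is invertible in $hC$.

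I expect this last step of $(1)\Rightarrow(2)$ to be the main obstacle. One must resist the tempting shortcut of replacing $\Delta^1$ by $J$ in the generating inclusions: the resulting map is only a trivial cofibration in the Joyal model structure, so it lifts against isofibrations but \emph{not} against an arbitrary inner fibration. A correct argument therefore needs either the genuine relative special-outer-horn filling (condition (2) of Proposition \ref{cartedge} with the final edge an equivalence, now for the inner fibration $p$) or the mapping-space criterion indicated above via Proposition \ref{rfibtriv}. An alternative organization reduces first to the absolute case $D=\Delta^0$, where $f$ is an equivalence if and only if $C_{/f}\to C_{/x}$ is a trivial fibration, and then invokes the stability of absolutely Cartesian edges to conclude that such an $f$ is $p$-Cartesian for every inner fibration out of $C$.
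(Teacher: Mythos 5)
The paper does not actually prove this statement; it cites \cite[Proposition 2.4.1.5]{lurie}, whose proof combines two inputs: Joyal's theorem that an edge of an $\infty$-category is an equivalence if and only if it is Cartesian for the projection to the point (equivalently, $C_{/f}\to C_{/x}$ is a trivial fibration when $f$ is an equivalence), and the composability/cancellation behaviour of Cartesian edges applied to the composite $C\to D\to \ast$. Your direction $(2)\Rightarrow(1)$ is a correct standard argument: the $\Lambda^2_2$-filling produces a right inverse $g$ of $f$ in $hC$ lying over a chosen inverse of $p(f)$, and essential uniqueness of factorizations through the Cartesian edge $f$ then identifies $g\circ f$ with $\mathrm{id}_x$ in $hC$.

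The direction $(1)\Rightarrow(2)$, however, contains a genuine gap exactly at the point you yourself flag as the main obstacle. You reduce to showing that $\theta_f\colon C_{/f}\to C_{/x}\times_{D_{/p(x)}}D_{/p(f)}$ is a trivial fibration, and then assert that this \enquote{expresses the statement that postcomposition with the equivalence $f$ induces equivalences on the relevant mapping spaces, which holds because $f$ is invertible in $hC$.} That clause is not an argument: deducing contractibility of these fibers from invertibility in $hC$ is essentially the content of Joyal's special outer horn lifting theorem (\cite[Proposition 1.2.4.3]{lurie} and its relative form), which is a substantial result that nothing in the present paper supplies. Two auxiliary steps are also imported without justification: that $\theta_f$ is a right fibration for an arbitrary inner fibration $p$ (true, but a separate theorem), and that a right fibration is a contravariant equivalence over its base if and only if its fibers are contractible --- the fiberwise detection of contravariant equivalences is nowhere established in this paper, so Proposition \ref{rfibtriv} alone does not effect your reduction to fibers. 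Your closing alternative (reduce to $D=\Delta^0$, then transfer absolute Cartesianness along $C\to D\to\ast$) is in fact Lurie's own route, but it rests on the same unproven input in the absolute case. To complete the proof you must either prove Joyal's theorem or cite it explicitly as the external ingredient, as the paper implicitly does by deferring to \cite{lurie}.
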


\begin{proof}
See \cite[Proposition 2.4.1.5]{lurie}.
\end{proof}

\begin{prop}\label{cancelcomp}
Let $p\colon X \to A$ be an inner fibration between $\infty$-categories. Let $\sigma\colon \Delta^2 \to X$ be a 2-simplex depicted as
\[
\begin{tikzcd}
{} & \cdot \dar{g}\\
\cdot \urar{f}\rar[swap]{h} & \cdot 
\end{tikzcd}
\]
Suppose that the edge $g$ is $p$-Cartesian. Then $f$ is $p$-Cartesian if and only if $h$ is $p$-Cartesian.
\end{prop}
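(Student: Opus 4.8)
The plan is to reduce the statement to the pasting law for homotopy pullback squares of spaces, exploiting the hypothesis that $X$ and $A$ are $\infty$-categories. Label the vertices of $\sigma$ so that $f\colon x\to y$, $g\colon y\to z$ and $h\colon x\to z$, and let bars denote images under $p$. The key input is the mapping-space reformulation of the Cartesian condition: for an inner fibration between $\infty$-categories, an edge $e\colon a\to b$ of $X$ is $p$-Cartesian if and only if for every object $w$ the square
\[
\begin{tikzcd}
\mathrm{Map}_X(w,a)\rar{e_\ast}\dar & \mathrm{Map}_X(w,b)\dar\\
\mathrm{Map}_A(\bar w,\bar a)\rar{\bar e_\ast} & \mathrm{Map}_A(\bar w,\bar b)
\end{tikzcd}
\]
is a homotopy pullback of spaces, where $e_\ast$ denotes post-composition. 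I would first establish this from Proposition \ref{cartedge}(1): the over-category $X_{/e}$ and its comparison to $X_{/b}\times_{A_{/\bar b}}A_{/\bar e}$ compute, on fibres over a vertex $w$, exactly the displayed map of mapping spaces; this comparison is always a right fibration (\cite{lurie}), so it is a trivial Kan fibration precisely when it is a fibrewise weak equivalence, which is precisely the homotopy-pullback condition above.

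Granting the reformulation, fix a test object $w$. Since $\sigma$ exhibits $h$ as a composite $g\circ f$ in $X$ and $\bar\sigma$ exhibits $\bar h\simeq\bar g\circ\bar f$ in $A$, post-composition gives $h_\ast\simeq g_\ast\circ f_\ast$, so the square attached to $h$ is the horizontal composite of the squares attached to $f$ and $g$:
\[
\begin{tikzcd}
\mathrm{Map}_X(w,x)\rar{f_\ast}\dar & \mathrm{Map}_X(w,y)\rar{g_\ast}\dar & \mathrm{Map}_X(w,z)\dar\\
\mathrm{Map}_A(\bar w,\bar x)\rar{\bar f_\ast} & \mathrm{Map}_A(\bar w,\bar y)\rar{\bar g_\ast} & \mathrm{Map}_A(\bar w,\bar z).
\end{tikzcd}
\]
The right-hand square is a homotopy pullback for every $w$ because $g$ is assumed $p$-Cartesian. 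By the pasting law for homotopy pullbacks, the left-hand square (attached to $f$) is a homotopy pullback if and only if the outer rectangle (attached to $h$) is. As this holds for all $w$, the reformulation yields that $f$ is $p$-Cartesian if and only if $h$ is $p$-Cartesian.

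The main obstacle is the first paragraph: carefully justifying the mapping-space characterization of $p$-Cartesian edges from the slice description of Proposition \ref{cartedge}(1), and verifying that the three squares assemble coherently (functorially in $w$) so that the pasting law genuinely applies. An alternative route, avoiding mapping spaces, is to argue directly with over-categories: one introduces $X_{/\sigma}$ and compares it along the edges of $\Delta^2$, noting that $\Delta^{\{0,1\}},\Delta^{\{0,2\}}\hookrightarrow\Delta^2$ are left anodyne while $\Delta^{\{1,2\}}\hookrightarrow\Delta^2$ is right anodyne, so that the relative slice lemma makes the comparisons along $f$ and $h$ trivial fibrations for free and all the content is concentrated in threading the $p$-Cartesian hypothesis on $g$ through the remaining comparison. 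That approach is self-contained but pays for it with the join/anodyne bookkeeping, which is precisely what the homotopy-pullback argument circumvents.
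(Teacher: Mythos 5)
Your argument is correct in outline, but note that the paper does not actually prove this statement: its ``proof'' is the citation \cite[Proposition 2.4.1.7]{lurie}. Lurie's own argument there is essentially the slice-category route you describe as your \emph{alternative}: one introduces $X_{/\sigma}$ and plays the restrictions along the faces of $\Delta^2$ against each other, using that $\Delta^{\{1,2\}}\hookrightarrow\Delta^2$ is right anodyne and that the $p$-Cartesianness of $g$ makes the relevant comparison map a trivial fibration. Your main route---the mapping-space reformulation of the Cartesian condition together with the pasting law for homotopy pullbacks---is a genuinely different and more conceptual proof, and it makes the cancellation shape of the statement transparent; the pasting step itself is unimpeachable (the square for $g$ is a homotopy pullback, so the square for $f$ is one iff the outer rectangle for $h$ is). What it buys in clarity it pays for in prerequisites: the ``key input'' is precisely \cite[Proposition 2.4.4.3]{lurie}, a result of comparable depth which Lurie only establishes \emph{after} 2.4.1.7, so as a self-contained replacement your route front-loads rather than avoids the join/slice bookkeeping. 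The two points you flag are the real ones: deducing the homotopy-pullback criterion from Proposition \ref{cartedge}(1) requires knowing that the comparison map out of $X_{/e}$ is a right fibration whose fibres over vertices compute the displayed mapping spaces (and hence is a trivial fibration iff it is a fibrewise equivalence), and the horizontal pasting needs a point-set model in which the three squares literally assemble---for instance one built from $X_{/\sigma}$---since $h_\ast\simeq g_\ast f_\ast$ only up to the homotopy supplied by $\sigma$. Both are standard and fillable, so I would count this as acknowledged technical debt rather than a gap.
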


\begin{proof}
See \cite[Proposition 2.4.1.7]{lurie}.
\end{proof}

\begin{defn}
Let $p\colon X\to A$ be an inner fibration. Then $p$ is called a \emph{Cartesian fibration} if for all lifting problems of the form
\[
\begin{tikzcd}
\Delta^{\{1\}} \rar \dar & X\dar{p}\\
\Delta^1 \rar \urar[dashed] & A,
\end{tikzcd}
\]
there exists a lift as indicated, which is $p$-Cartesian.
\end{defn}

\begin{sub}
We denote by $\mathbf{sSet}^+$ the category of \emph{marked simplicial sets}. It's objects are pairs $(A,E_A)$ where $A$ is a simplicial set and $E_A\subseteq A_1$ is a collection of 1-simplices of $A$ containing all degenerate 1-simplices, which are called the \emph{marked edges}. The morphisms are given by morphisms of simplicial sets which preserve the marked edges.
\end{sub}

\begin{sub}
The category $\mathbf{sSet}^+$ is locally cartesian closed. Moreover, colimits are universal and coproducts are disjoint (see \cite{verity}). 
\end{sub}

\begin{sub}
Consider the class of morphisms of marked simplicial sets, whose underlying morphism of simplicial sets is a monomorphism. It is easy to see that this class is generated as a saturated class by the set of morphisms given by
\begin{itemize}
	\item $(\partial \Delta^n)^\flat\to (\Delta^n)^\flat$ for $n\geq 0$,
	\item $(\Delta^1)^\flat \to \mcyl$.
\end{itemize}{}
Thus, we obtain a cofibrantly generated weak factorization system with left class the maps with underlying monomorphisms and whose right class we refer to as \emph{trivial fibrations}. We denote this factorization system also as (Mono, Triv).
\end{sub}

\begin{sub}
The forgetful functor
\[
\mathbf{sSet}^+ \to \mathbf{sSet}
\]
has both a left and a right adjoint. We denote the left adjoint by 
\[
(\cdot)^\flat \colon \mathbf{sSet} \to \mathbf{sSet}^+
\]
Given a simplicial set $A$, the marked simplicial set $A^\flat$ has precisely the degenerate 1-simplices marked. The right adjoint will be denoted by
\[
(\cdot)^\sharp \colon \mathbf{sSet}\to \mathbf{sSet}^+
\]
Given a simplicial set $B$, the marked simplicial set $B^\sharp$ has all 1-simplices marked. 
\end{sub}

\begin{sub}
The functor $(\cdot)^\sharp$ has a further right adjoint, denoted by 
\[
\mu \colon \mathbf{sSet}^+\to \mathbf{sSet}
\]
Given a marked simplicial set $(A,E_A)$, the simplicial set $\mu(A,E_A)$ is the simplicial subset of $A$ spanned by the marked edges.
\end{sub}

\begin{sub}
Suppose $p\colon X\to A$ is an inner fibration of simplicial sets. We will denote by $X^\natural$ the marked simplicial set, which has precisely the $p$-Cartesian edges marked. Note that this is slightly abusive notation, since the marked simplicial set $X^\natural$ depends on the map of simplicial sets $p$.
\end{sub}

\begin{sub}
Consider the functor
\[
\mcyl \times (\cdot)\colon \mathbf{sSet}^+ \to \mathbf{sSet}^+
\]
We claim that this is an exact cylinder for marked simplicial sets.
\end{sub} 

\begin{lem}
The cylinder
\[
\mcyl \times (\cdot)\colon \mathbf{sSet}^+\to \mathbf{sSet}^+
\]
is an exact cylinder with respect to the factorization system \emph{(Mono, Triv)} on $\mathbf{sSet}^+$.
\end{lem}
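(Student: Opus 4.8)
The plan is to verify the three defining conditions of an exact cylinder for $I := \mcyl \times (\cdot)$ with respect to (Mono, Triv): that $I$ commutes with small colimits, and that $\partial I \boxtimes j$ and $\partial_i \boxtimes j$ lie in $\mathcal L$ for every $j \in \mathcal L$ and $i=0,1$. Here, by definition of the factorization system on $\mathbf{sSet}^+$, a map lies in $\mathcal L$ exactly when its underlying map of simplicial sets is a monomorphism. The observation driving the whole argument is that the forgetful functor $U\colon \mathbf{sSet}^+ \to \mathbf{sSet}$ sits in a chain of adjunctions $(\cdot)^\flat \dashv U \dashv (\cdot)^\sharp$, so that $U$ preserves both small limits and small colimits; in particular it preserves finite products, coproducts and pushouts, and $U(\mcyl) = \Delta^1$.

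For the colimit condition I would use that $\mathbf{sSet}^+$ is locally cartesian closed and has a terminal object, hence is cartesian closed. Therefore $I \times (\cdot)$ admits a right adjoint $(\cdot)^{\mcyl}$ and consequently commutes with all small colimits. Equivalently, writing $\pi\colon \mcyl \to \ast$, the functor $I \times (\cdot)$ factors as $\pi_! \circ \pi^\ast$, a composite of two left adjoints, where $\pi^\ast$ has a right adjoint by local cartesian closedness.

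For the two pushout-product conditions I would reduce to the simplicial case through $U$. Given $j\colon K \to L$ with $Uj$ a monomorphism, the identities $U(I\otimes X) = \Delta^1 \times UX$ and $U(\partial I \otimes X) = UX \sqcup UX$, together with the fact that $U$ preserves the pushouts defining $\boxtimes$, show that $U(\partial I \boxtimes j)$ is precisely the pushout-product $\partial(\Delta^1) \boxtimes (Uj)$ for the cylinder $\Delta^1 \times (\cdot)$ on $\mathbf{sSet}$, and likewise that $U(\partial_i \boxtimes j)$ is the corresponding endpoint pushout-product. Since $\Delta^1 \times (\cdot)$ is an exact cylinder on $\mathbf{sSet} = \psh(\Delta)$ by Example \ref{separatingsegment}, these underlying maps are monomorphisms, whence $\partial I \boxtimes j$ and $\partial_i \boxtimes j$ lie in $\mathcal L$. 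The same computation applied to $\emptyset \to X$ shows the endpoint inclusion $X \sqcup X \to I \otimes X$ lies in $\mathcal L$, so that $I$ genuinely is a cylinder to begin with.

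I do not expect a real obstacle here: once one notices that membership in $\mathcal L$ is tested purely on underlying simplicial sets and that $U$ preserves every construction in sight, the markings become irrelevant and everything collapses to the already-established simplicial statement. The only point demanding care is the bookkeeping — confirming that $U$ carries the marked products, coproducts and pushouts assembling $\boxtimes$ to their simplicial counterparts — but this is immediate from the two adjunctions. The disjointness of the two endpoint inclusions $\ast \to \mcyl$, which makes the relevant square over $\mcyl$ cartesian, is what ultimately forces the simplicial pushout-products to be monomorphisms, and this is inherited directly from $\Delta^1$.
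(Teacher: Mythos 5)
Your proposal is correct and follows essentially the same route as the paper: colimit-preservation via cartesian closedness of $\mathbf{sSet}^+$, and reduction of the pushout-product conditions to the already-established exactness of $\Delta^1\times(\cdot)$ on $\mathbf{sSet}$ by observing that the forgetful functor preserves the relevant (co)limits and that membership in $\mathcal L$ is detected on underlying simplicial sets. The only difference is cosmetic — you make the double adjunction $(\cdot)^\flat \dashv U \dashv (\cdot)^\sharp$ explicit where the paper only invokes $U$ being a left adjoint — which does not change the argument.
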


\begin{proof}
It is clear that $\mcyl \times (\cdot)$ preserves colimits, since $\mathbf{sSet}^+$ is cartesian closed. Let $i\colon (K,E_K)\to (L,E_L)$ be a monomorphism. Consider the commutative diagram
\[
\begin{tikzcd}
\partial \mcyl \times (K,E_K) \rar \dar & \partial \mcyl \times (L,E_L)\dar\arrow[bend left]{ddr} & \\ 
\mcyl \times (K,E_K)\rar\arrow[bend right]{drr} &\mcyl \times (K,E_K)\cup\partial \mcyl \times (L,E_L)\drar  & \\
& & \mcyl \times (L,E_L)
\end{tikzcd}
\]
in which the square is a pushout. We need to show that the map $\partial \mcyl \boxtimes i$ is in the class $\mathcal L$, i.e. its underlying map of simplicial sets is a monomorphism. But the forgetful functor to simplicial sets is a left adjoint, hence the above diagram gives a diagram in simplicial sets in which the square is a pushout. Since the cylinder $\Delta^1\times(\cdot)$ is exact, it follows that the underlying map of $\partial \mcyl \boxtimes i$, which is just $\partial \Delta^1 \boxtimes i$, is a monomorphism. The same argument holds for $\partial_j \boxtimes i$ for $j=0,1$.
\end{proof}

\begin{defn}
Let $\mathcal I^+$ be the elementary homotopical datum associated to the exact cylinder $\mcyl \times (\cdot)$ with respect to (Mono, Triv), and the set of maps defined by
\begin{itemize}
	\item $(\Lambda^n_k)^\flat \to (\Delta^n)^\flat$ for $n \geq 2$ and $0<k<n$,
	\item $J^\flat \to J^\sharp$ where $J$ is the nerve of the free walking isomorphism.
\end{itemize}
\end{defn}

\begin{sub}
Now let $(A,E_A)$ be a marked simplicial set. By Construction \ref{relhomotopicaldatum} we obtain an elementary homotopical datum $\mathcal I^+_{(A,E_A)}$. Thus applying Theorem \ref{relativerighthm} we obtain a Contravariant and Covariant model structure on $\mathbf{sSet}^+$.
\end{sub}

\begin{thm}\label{markedcov}
For any marked simplicial set $(A,E_A)$, there is a Contravariant and Covariant model structure on $\mathbf{sSet}^+/(A,E_A)$ induced by $\mathcal I^+_{(A,E_A)}$. 
\end{thm}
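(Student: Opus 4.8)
The theorem asserts the existence of a Contravariant and Covariant model structure on the slice $\mathbf{sSet}^+/(A,E_A)$ induced by the elementary homotopical datum $\mathcal{I}^+_{(A,E_A)}$. My plan is to verify that $\mathcal{I}^+_{(A,E_A)}$ genuinely assembles the hypotheses of the machinery already built in Section \ref{abstrcontr}, and then simply invoke Theorem \ref{relativerighthm}. Concretely, I must check three things: that $\mathbf{sSet}^+$ is locally presentable with the stated cofibrantly generated weak factorization system (Mono, Triv) in which $\emptyset \to X$ lies in the left class; that $\mcyl \times (\cdot)$ is an exact cylinder with respect to (Mono, Triv); and that the designated set of maps generates, via Construction \ref{anodynegenerators} and Lemma \ref{smallanodyne}, a legitimate class of right (and dually left) $\mathcal{I}^+$-anodyne extensions. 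Once the datum on $\mathbf{sSet}^+$ itself is confirmed, passage to the slice is formal.

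\medskip

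\emph{First steps.} The category $\mathbf{sSet}^+$ is a presheaf-type category and hence locally presentable; the weak factorization system (Mono, Triv) and its cofibrant generation by the two displayed generating sets were recorded in the paragraph preceding the definition of $\mathcal{I}^+$, and clearly $\emptyset \to (X,E_X)$ has underlying monomorphism, so it lies in the left class. The exactness of the cylinder $\mcyl \times (\cdot)$ is precisely the content of the Lemma proved just above this theorem. Thus I would first cite these two facts to establish that we have an exact cylinder with respect to a suitable cofibrantly generated weak factorization system, so that $\mathcal{I}^+ = (\mcyl\times(\cdot), \Lambda^+)$, with $\Lambda^+$ the displayed set of marked horn inclusions together with $J^\flat \to J^\sharp$, is a genuine elementary homotopical datum in the sense of the abstract definition.

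\medskip

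\emph{Producing the homotopical structures and applying the theorem.} With the elementary homotopical datum $\mathcal{I}^+$ in hand on $\mathbf{sSet}^+$, Lemma \ref{smallanodyne} and its dual yield a right homotopical structure $r(\mathcal{I}^+)$ and a left homotopical structure $l(\mathcal{I}^+)$. Now I invoke Construction \ref{relhomotopicaldatum}: for the object $(A,E_A)$, the cylinder $\mcyl \times (\cdot)$ induces a cylinder $\mathcal{I}^+_{(A,E_A)}$ on the slice $\mathbf{sSet}^+/(A,E_A)$, and the class of morphisms in the slice whose underlying maps in $\mathbf{sSet}^+$ are right $\mathcal{I}^+$-anodyne (resp. left $\mathcal{I}^+$-anodyne) forms a class of right (resp. left) $\mathcal{I}^+_{(A,E_A)}$-anodyne extensions, exactly as checked there. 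The weak factorization system $(\mathcal{L}_{(A,E_A)}, \mathcal{R}_{(A,E_A)})$ on the slice, defined via the forgetful functor, is again cofibrantly generated with $\emptyset \to X$ in the left class. Applying Theorem \ref{relativerighthm} (equivalently Theorems \ref{righthm} and \ref{lefthm} in the slice) then produces the desired Contravariant model structure from $r(\mathcal{I}^+_{(A,E_A)})$ and the Covariant model structure from $l(\mathcal{I}^+_{(A,E_A)})$.

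\medskip

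\emph{Anticipated obstacle.} The statement as phrased is essentially a specialization of the general existence theorem, so the genuine content has already been discharged in the preceding lemma (exactness of the marked cylinder) and in the abstract machinery. The one point I would treat with slight care is confirming that the chosen generating set $\Lambda^+$ truly lies in the left class $\mathcal{L}$ of (Mono, Triv)—i.e.\ that each $(\Lambda^n_k)^\flat \to (\Delta^n)^\flat$ and $J^\flat \to J^\sharp$ has underlying monomorphism, which is immediate on underlying simplicial sets—so that Construction \ref{anodynegenerators} applies verbatim. Beyond this bookkeeping the proof is a direct citation, and I expect no substantive difficulty; the harder comparison results (identifying these model structures with Lurie's coCartesian and Cartesian structures) are deferred to the subsequent development rather than being part of this existence statement.
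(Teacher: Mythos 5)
Your proposal is correct and follows the same route as the paper: the paper likewise treats this as a direct application of the preceding lemma (exactness of the cylinder $\mcyl\times(\cdot)$ with respect to (Mono, Triv)), Construction \ref{relhomotopicaldatum} to pass to the slice, and Theorem \ref{relativerighthm}. The extra bookkeeping you flag (that the generators of $\mathcal I^+$ are monomorphisms) is the only real check, and it is immediate, as you say.
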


\begin{defn}
We will call the Contravariant model structure on $\mathbf{sSet}^+/(A,E_A)$ the \emph{Cartesian model structure}. We denote this model category by $\mathbf{Cart}(A,E_A)$. We will refer to the right $\mathcal I^+_{(A,E_A)}$-anodyne extensions as \emph{marked right anodyne extensions} and to the right $\mathcal I^+_{(A,E_A)}$-fibrations \emph{marked right fibrations.} Furthermore, we refer to the weak equivalences as \emph{Cartesian equivalences}.

Dually, we will call the Covariant model structure on $\mathbf{sSet}^+/(A,E_A)$ the \emph{coCartesian model structure}. We denote this model category by $\mathbf{coCart}(A,E_A)$. We will refer to the left anodyne extensions and left fibrations as \emph{marked left anodyne extensions} and \emph{marked left fibrations.} Furthermore, we refer to the weak equivalences as \emph{coCartesian equivalences}.
\end{defn}

\begin{sub}
The rest of this section only considers the Cartesian model structure. The associated statements for the coCartesian model structure easily follow by duality. We first compare our model structures to Lurie's, thus would like to have a finer control on the marked right anodyne extensions and marked right fibrations. To this end we first construct more explicit generators for the marked right anodyne extensions.
\end{sub} 

\begin{defn}\label{markedgenerators}
We define $\mathcal A$ to be the smallest saturated class containing the morphisms
\begin{itemize}
	\item[(A1)] $(\Lambda^n_k)^\flat \to (\Delta^n)^\flat$ for $n \geq 2$ and $0<k<n$,
	\item[(A2)] $J^\flat \to J^\sharp$, 
	\item[(B1)] $\mcyl \times (\Delta^1)^\flat \cup \1\times \mcyl \to \mcyl \times \mcyl$,
	\item[(B2)] $\mcyl \times (\partial \Delta^n)^\flat\cup \1 \times (\Delta^n)^\flat \to \mcyl \times (\Delta^n)^\flat$.
\end{itemize}
\end{defn}

\begin{lem}\label{productclosure}
For all monomorphisms $K\to L$ and all $A\to B \in \mathcal A$ the morphism
\[
A \times L \cup B\times K \to L \times B
\]
is also in $\mathcal A$.
\end{lem}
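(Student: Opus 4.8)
This lemma is a pushout–product (Leibniz) closure statement, and the natural strategy is the standard two–fold reduction to generators followed by the analysis of finitely many base cases. For maps $u\colon C\to D$ and $v\colon E\to F$ let me write $u\odot v$ for the pushout–product $(D\times E)\cup_{C\times E}(C\times F)\to D\times F$, so that the assertion is precisely that $u\odot v\in\mathcal A$ whenever $u\in\mathcal A$ and $v$ is a monomorphism. The first reduction fixes a monomorphism $i\colon K\to L$ and considers the class $\mathcal S_i$ of maps $j$ with $j\odot i\in\mathcal A$. Since $\mathbf{sSet}^+$ is cartesian closed, the cartesian product preserves colimits in each variable, so $(\cdot)\odot i$ preserves colimits; as $\mathcal A$ is saturated this makes $\mathcal S_i$ a saturated class (the routine pushout–product calculus shows that pushouts, transfinite compositions and retracts of maps in $\mathcal S_i$ are again sent into $\mathcal A$). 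Hence $\mathcal A\subseteq\mathcal S_i$ as soon as $\mathcal S_i$ contains the four generators (A1), (A2), (B1), (B2), so it suffices to treat the case where $j$ is one of these.

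Symmetrically, the second reduction fixes a generator $g$ and considers the class $\mathcal T_g$ of monomorphisms $i$ with $g\odot i\in\mathcal A$. By the same argument $\mathcal T_g$ is saturated, so it is enough to check that it contains a generating set of the monomorphisms, namely $(\partial\Delta^n)^\flat\to(\Delta^n)^\flat$ and $(\Delta^1)^\flat\to\mcyl$. This reduces the whole lemma to the finitely many base cases in which $g\in\{(A1),(A2),(B1),(B2)\}$ and $i$ is one of these two generating monomorphisms.

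Several base cases dispatch formally. The generators (B1) and (B2) are exactly the pushout–products $\partial_1\odot m_0$ of the endpoint inclusion $\partial_1\colon\1\to\mcyl$ with the two generating monomorphisms $m_0$; consequently the saturated class of monomorphisms $m$ with $\partial_1\odot m\in\mathcal A$ contains both generators, hence \emph{all} monomorphisms. If $g$ is (B1) or (B2), write $g=\partial_1\odot m_0$; associativity and symmetry of the pushout–product give $g\odot i\cong\partial_1\odot(m_0\odot i)$, and $m_0\odot i$ is again a monomorphism (the forgetful functor to $\mathbf{sSet}$ has both adjoints, hence preserves products and colimits, so the underlying map is the pushout–product of the underlying monomorphisms), whence $g\odot i\in\mathcal A$ by the preceding observation. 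For (A1) against $(\partial\Delta^n)^\flat\to(\Delta^n)^\flat$ both factors are flat, a product of flat marked sets is flat, and the underlying simplicial map is the pushout–product of an inner horn inclusion with a boundary inclusion, which is inner anodyne; since all markings stay degenerate, this map lies in the saturated class generated by (A1).

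The remaining base cases are exactly those that alter markings: the pushout–products of (A1) and of (A2) with the edge–marking monomorphism $(\Delta^1)^\flat\to\mcyl$, and of (A2) $J^\flat\to J^\sharp$ with the boundary inclusion. These carry the genuine combinatorics of the proof, and I expect them to be the main obstacle. The plan here is to exhibit each such map as built from (A1), (A2) and the cylinder generators (B1), (B2) by an explicit filtration of the relevant product along its nondegenerate simplices, keeping track at every stage of which edges are marked (i.e.\ which play the role of Cartesian edges), in the spirit of the analysis of marked anodyne maps in \cite[Section 3.1]{lurie}. The interaction of the marking change $J^\flat\to J^\sharp$ with the edge–marking inclusion $(\Delta^1)^\flat\to\mcyl$ should be the hardest single computation, and this marked–edge bookkeeping is where I expect the real work to concentrate.
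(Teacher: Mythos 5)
Your structural reductions are exactly the paper's: reduce to the four generators of $\mathcal A$ in one variable and to the two generating monomorphisms $(\partial\Delta^n)^\flat\to(\Delta^n)^\flat$, $(\Delta^1)^\flat\to\mcyl$ in the other, dispatch (B1) and (B2) by associativity of the pushout--product (they are themselves of the form $\partial_1\boxtimes m_0$), and observe that (A1) against a boundary inclusion is a flat inner anodyne extension. All of that is correct. The problem is that you stop there: the remaining base cases are announced as a ``plan'' rather than proved, so the argument is incomplete precisely where the content of the lemma lies.

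Moreover, you misjudge where that content is. Of the three cases you defer, two are isomorphisms and require no combinatorics at all: for (A1) against $(\Delta^1)^\flat\to\mcyl$, the underlying simplicial map is the identity of $\Delta^n\times\Delta^1$ and every degenerate edge of $\Delta^n$ already lies in $\Lambda^n_k$ (all vertices do, since $n\geq 2$), so source and target carry the same markings; similarly (A2) against $(\partial\Delta^n)^\flat\to(\Delta^n)^\flat$ is an isomorphism for $n\geq 1$ (and is just (A2) itself for $n=0$). The single genuinely nontrivial case is (A2) against $(\Delta^1)^\flat\to\mcyl$, i.e.\ the map $(\Delta^1)^\flat\times J^\sharp\cup\mcyl\times J^\flat\to\mcyl\times J^\sharp$. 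The paper handles this not by a filtration along nondegenerate simplices of the product (which would be awkward here, since $J$ has nondegenerate simplices in every dimension), but by the short observation that the underlying simplicial map is an identity, so one only has to add markings, and for each nondegenerate edge $f\colon\Delta^1\to J$ the square $f\times\mathrm{id}\colon\Delta^1\times\Delta^1\to J\times\Delta^1$ carries the source of (B1) into the source of the map in question; pushing out (B1) along these squares, one for each nondegenerate edge of $J$, marks all remaining edges. Without this (or an equivalent) argument your proof does not close, so as it stands there is a genuine gap in the one case that actually matters.
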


\begin{proof}
It suffices to show this for the generators (A1) and (A2). Recall that the monomorphisms in $\mathbf{sSet}^+$ are generated by the morphisms
\begin{enumerate}
	\item $(\Delta^1)^\flat\to \mcyl$
	\item $(\partial \Delta^n)^\flat \to (\Delta^n)^\flat$.
\end{enumerate}
We observe that the pushout product of (A1) and (1) as well as the pushout product of (A2) and (2) will yield isomorphisms. The pushout product of (A1) and (2) is an inner anodyne extension of simplicial sets and hence in $\mathcal A$. It remains to show that the pushout product 
\[
(\Delta^1)^\flat \times J^\sharp \cup \mcyl \times J^\flat \to \mcyl \times J^\sharp 
\]
is in $\mathcal A$. We observe that this map is an iterated pushout of maps in the class (B1).
\end{proof}

\begin{lem}
The class $\mathcal A$ is the class of marked right anodyne extensions.
\end{lem}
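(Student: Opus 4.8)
The plan is to prove both inclusions, the key being that the generating set of $\mathcal A$ is exactly the initial stage $\Lambda_I^{0,r}(S,M)$ of the inductive construction in Construction \ref{anodynegenerators}. Recall that the monomorphisms of $\mathbf{sSet}^+$ are generated by the set $M$ consisting of $(\partial\Delta^n)^\flat\to(\Delta^n)^\flat$ and $(\Delta^1)^\flat\to\mcyl$, and that the datum $\mathcal I^+$ uses $S=\{(\Lambda^n_k)^\flat\to(\Delta^n)^\flat,\ J^\flat\to J^\sharp\}$. For $i\colon K\to L$ the map $\partial_1\boxtimes i$ is $\mcyl\times K\cup\1\times L\to\mcyl\times L$; evaluating this on the two families in $M$ gives precisely the generators (B2) and (B1). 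Hence
\[
\Lambda_I^{0,r}(S,M)=S\cup\{\partial_1\boxtimes i\mid i\in M\}
\]
is exactly the set $\{\text{(A1), (A2), (B1), (B2)}\}$, so that $\mathcal A$ is the saturated class it generates.

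For the inclusion of $\mathcal A$ into the marked right anodyne extensions, note that (A1) and (A2) lie in $S$ and so are marked right anodyne, while (B1) and (B2) have the form $\partial_1\boxtimes i$ with $i\in M\subseteq\mathcal L$ and are therefore marked right anodyne by the second axiom defining a class of right $I$-anodyne extensions. Since the marked right anodyne extensions form a saturated class and $\mathcal A$ is the smallest saturated class containing (A1)--(B2), this inclusion follows.

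For the reverse inclusion I invoke Lemma \ref{smallanodyne}, which identifies the marked right anodyne extensions with the saturated class generated by $\bigcup_n\Lambda_I^{n,r}(S,M)$. It thus suffices to show $\Lambda_I^{n,r}(S,M)\subseteq\mathcal A$ for every $n$, which I prove by induction. The base case $n=0$ is the identification above. For the inductive step, $\Lambda_I^{n+1,r}(S,M)$ consists of the maps $\partial I\boxtimes i$ with $i\in\Lambda_I^{n,r}(S,M)$, so I must show $\mathcal A$ is closed under $\partial I\boxtimes(\cdot)$. The crucial observation is that, for any $i\in\mathcal A$, the map $\partial I\boxtimes i$ is precisely the pushout-product of the monomorphism $\partial\mcyl\hookrightarrow\mcyl$ with $i$; applying Lemma \ref{productclosure} with $\partial\mcyl\to\mcyl$ in the role of the monomorphism $K\to L$ and $i$ in the role of $A\to B$ shows that this map lies in $\mathcal A$. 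This closes the induction, gives $\bigcup_n\Lambda_I^{n,r}(S,M)\subseteq\mathcal A$, and hence, since $\mathcal A$ is saturated, that every marked right anodyne extension lies in $\mathcal A$.

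Together the two inclusions give the claim. The only delicate point is the bookkeeping: matching $\partial_1\boxtimes(\cdot)$ on $M$ with (B1)--(B2), and recognizing $\partial I\boxtimes(\cdot)$ as the pushout-product handled by Lemma \ref{productclosure}. Once this is set up the argument is a formal induction, all the genuine combinatorial content having already been absorbed into Lemma \ref{productclosure}.
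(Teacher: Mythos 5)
Your proof is correct and follows essentially the same route as the paper: both identify the initial stage of Construction \ref{anodynegenerators} with the generators (A1)--(B2) and then use Lemma \ref{productclosure} to show $\mathcal A$ is closed under $\partial I\boxtimes(\cdot)$, so that the inductively defined generating set stays inside $\mathcal A$. If anything, your bookkeeping of the base case (matching $\partial_1\boxtimes i$ for $i$ in the generating monomorphisms with (B1) and (B2)) is stated more cleanly than in the paper's own proof.
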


\begin{proof}
It is clear that the class of marked right anodyne extensions contains the class $\mathcal A$. Conversely, Construction \ref{anodynegenerators} gave an explicit generating set for marked right anodyne extensions. Recall that this set of generators was constructed inductively and in our situation this takes the following form. The starting set is given by the set
\[
(A1)\cup (A2)\cup \set{\partial_1 \boxtimes i | i \in (B1)\cup (B2)}.
\]
We observe that the morphisms $\partial_1 \boxtimes i$ above are all in $\mathcal A$ by Lemma \ref{productclosure}, hence the above set is contained in $\mathcal A$. To finish the proof we observe that, in the notation of Construction \ref{anodynegenerators}, we have $\Lambda(\mathcal A)\subseteq \mathcal A$ again by Lemma \ref{productclosure}. Thus, any morphism in the generating set for marked right anodyne extensions is in $\mathcal A$ and hence $\mathcal A$ contains the marked right anodyne extensions.
\end{proof}

\begin{lem}\label{kan}
For any $\infty$-groupoid $K$, the morphism $K^\flat \to K^\sharp$ is a marked right anodyne extension.
\end{lem}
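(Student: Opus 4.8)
The plan is to realize $K^\flat \to K^\sharp$ as a single pushout of a coproduct of copies of the generator (A2), namely $J^\flat \to J^\sharp$, and then to conclude by saturation of the class of marked right anodyne extensions.

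First I would exploit that $K$ is an $\infty$-groupoid, i.e.\ a Kan complex, so that every edge $e\colon \Delta^1 \to K$ is an equivalence. As $J$ is the nerve of the free-walking isomorphism, such an equivalence extends along the inclusion $\Delta^1 \hookrightarrow J$ selecting the non-degenerate arrow of $J$; indeed $\Delta^1 \to J$ is a trivial cofibration in the Kan--Quillen model structure (both objects are contractible and the map is a monomorphism), so restriction along it is surjective on vertices whenever the target is a Kan complex. Hence for each $e \in K_1$ I may choose an extension $j_e\colon J \to K$ with $j_e|_{\Delta^1} = e$.

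Next I would form the coproduct $\coprod_{e\in K_1} J^\flat \to \coprod_{e\in K_1} J^\sharp$. Each summand is the generator (A2), and since the class of marked right anodyne extensions is saturated (hence closed under coproducts, a coproduct being a transfinite composite of pushouts of its summands), this map is marked right anodyne. The maps $j_e^\flat$ assemble into a map $\coprod_{e} J^\flat \to K^\flat$, and I take the pushout
\[
\begin{tikzcd}
\coprod_{e} J^\flat \rar \dar & K^\flat \dar \\
\coprod_{e} J^\sharp \rar & P.
\end{tikzcd}
\]
It then remains to identify the right-hand vertical map with $K^\flat \to K^\sharp$. The forgetful functor to $\mathbf{sSet}$ preserves colimits and carries the left vertical map to the identity of $\coprod_{e} J$, so the underlying simplicial set of $P$ is $K$. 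On markings, an edge of $P$ is marked exactly when it is degenerate (coming from $K^\flat$) or is the image of a marked edge of some $J^\sharp$; since every edge of $J$ is marked in $J^\sharp$ and $j_e$ sends the non-degenerate arrow of $J$ to $e$, every edge $e \in K_1$ becomes marked in $P$. Hence $P = K^\sharp$ and the right vertical map is the canonical inclusion $K^\flat \to K^\sharp$. Being a pushout of a marked right anodyne map, it is itself marked right anodyne.

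I expect the only genuine content to be the extension step — that every edge of the $\infty$-groupoid $K$ is realized as a coherent isomorphism $j_e\colon J \to K$ — together with the routine but necessary bookkeeping that colimits in $\mathbf{sSet}^+$ compute markings by images (using that in $\mathbf{sSet}^+$ colimits are universal and coproducts disjoint, as recorded earlier), so that the pushout genuinely marks every edge of $K$ and no more.
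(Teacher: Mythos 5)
Your proposal is correct and takes essentially the same route as the paper, which exhibits $K^\flat \to K^\sharp$ as a pushout of $\coprod J^\flat \to \coprod J^\sharp$ (the paper indexes the coproduct over \emph{all} maps $J\to K$ rather than one chosen extension per edge, but this is immaterial). Your version in fact spells out the two points the paper leaves implicit — that every edge of a Kan complex extends along the anodyne inclusion $\Delta^1 \to J$, and that the pushout marks exactly the edges of $K$ — so nothing is missing.
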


\begin{proof}
We have a pushout diagram
\[
\begin{tikzcd}
\sqcup \ J^\flat \rar \dar & K^\flat\dar \\
\sqcup \ J^\sharp \rar & K^\sharp 
\end{tikzcd}
\]
where the coproduct is taken over all possible maps $J \to K$.
\end{proof}

\begin{sub}
The following proposition characterizes the marked right fibrations of 	$\mathbf{sSet}^+$ in the Cartesian model structure. Its proof is adapted from \cite[Proposition 3.1.1.6]{lurie}.
\end{sub}

\begin{prop}
Let $p\colon (X,E_X)\to (A,E_A)$ be a morphism of marked simplicial sets. Then $p$ is a marked right fibration if and only if the following conditions hold.
\begin{enumerate}
	\item The underlying map of simplicial sets is an inner fibration.
	\item For any $y\in X$ and any marked $\overline f\colon x \to p(y)\in E_A$, there exists a marked edge $f\in E_X$ such that $p(f)= \overline f$.
	\item An edge $f\colon \Delta^1 \to X$ is marked if and only if $p(f)\in E_A$ and $f$ is $p$-Cartesian.
\end{enumerate}
\end{prop}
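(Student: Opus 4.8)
The plan is to unwind the definition of a marked right fibration directly in terms of the generating set of $\mathcal A$ from Definition \ref{markedgenerators}, checking that the right lifting property against each family of generators (A1), (A2), (B1), (B2) corresponds to the stated conditions. Two of these correspondences are immediate. Against (A1) the markings are carried by degenerate edges only, so the marking of any lift is automatic and the right lifting property is exactly the assertion that the underlying map $X \to A$ lifts against the inner horns $\Lambda^n_k \to \Delta^n$, i.e.\ is an inner fibration; this is condition (1). Against (B2) with $n=0$ the generator is $\1 \to \mcyl$, so a lifting problem picks a vertex $y$ of $X$ together with a marked edge $\overline f$ of $A$ ending at $p(y)$, and a lift is precisely a marked edge of $X$ ending at $y$ over $\overline f$; this is condition (2).

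The heart of the statement is condition (3), the identification of $E_X$ with the $p$-Cartesian edges lying over $E_A$. First I would record what the remaining generators compute: a marked map out of $\mcyl \times (\Delta^n)^\flat$ is a simplicial map $\Delta^1 \times \Delta^n \to X$ sending every \emph{vertical} edge $\Delta^1 \times \{v\}$ to a marked edge, and for $n \geq 1$ every vertex already lies in $\partial\Delta^n$, so the right lifting property against (B2) reduces to an \emph{underlying} simplicial lifting problem of exactly the shape in Proposition \ref{cartedge}(3), with the standing hypothesis that all vertical edges are marked. To obtain the direction ``marked $\Rightarrow$ $p$-Cartesian'' I would show that the marked outer horn inclusions $\Lambda^n_n \to \Delta^n$, with the edge $\Delta^{\{n-1,n\}}$ marked, already lie in $\mathcal A$, by a pushout-product and filtration argument in the spirit of Lemma \ref{productclosure}; lifting against these, via the criterion of Proposition \ref{cartedge}(2), says precisely that every marked edge is $p$-Cartesian (its image is automatically marked since $p$ preserves markings). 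For the converse ``$p$-Cartesian over marked $\Rightarrow$ marked'', given such an $f\colon x \to y$ I would use condition (2) to produce a genuinely marked lift $g$ of $p(f)$ ending at $y$; by the previous step $g$ is $p$-Cartesian, so $f$ and $g$ are two $p$-Cartesian lifts of the same edge with the same target and are joined by an equivalence over an identity edge of $A$. The right lifting property against (A2) forces such an equivalence to be marked, and the right lifting property against (B1) propagates markedness across the resulting square, yielding $f \in E_X$.

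For the backward implication I would assume (1)--(3) and verify lifting against each generator. Against (A1) this is condition (1). Against (B2) with $n=0$ it is condition (2), and for $n \geq 1$ the marked vertical edges in the domain are $p$-Cartesian by condition (3), so one fills $\Delta^1 \times \Delta^n$ inductively: the inner horns by the inner fibration and the remaining outer horn by the $p$-Cartesian lifting property, exactly as in the reduction above. Against (A2), an edge classified by $J \to X$ is an equivalence, hence $p$-Cartesian with equivalent image by Proposition \ref{cartiso}; as its image is marked, condition (3) makes it marked and the lift $J^\sharp \to X$ exists. Against (B1), the composite and diagonal edges of the relevant square are $p$-Cartesian over marked edges by Proposition \ref{cancelcomp}, hence marked by condition (3), so the lift exists.

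I expect the main obstacle to be the direction ``marked $\Rightarrow$ $p$-Cartesian'', which hinges on the combinatorial fact that the marked outer horns belong to $\mathcal A$: the product generators (B1), (B2) supply lifts only when \emph{all} vertical edges are marked, whereas the $p$-Cartesian criterion of Proposition \ref{cartedge}(3) must be solved with the remaining vertical edges arbitrary, so a genuine filtration argument is needed to bridge the two. A secondary technical point is that Propositions \ref{cartiso} and \ref{cancelcomp} are stated for inner fibrations between $\infty$-categories, so their use (for (A2) and (B1)) requires first restricting to the relevant $\infty$-categorical situation, and the uniqueness-of-Cartesian-lifts argument in the converse of (3) must be arranged so that the comparison equivalence lies over an identity edge and is visibly captured by (A2) and (B1).
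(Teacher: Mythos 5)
Your overall architecture coincides with the paper's: both directions are handled by matching the right lifting property against the four families of Definition \ref{markedgenerators} with the three stated conditions; the implication ``$p$-Cartesian over marked $\Rightarrow$ marked'' is obtained, exactly as you sketch, by comparing $f$ with a marked Cartesian lift $f'$ of $p(f)$ having the same target, showing the comparison edge is an equivalence in the fiber, marking it via the $(\mathrm{A2})$-type input (the paper applies Lemma \ref{kan} to the marked right fibration $X_{p(x)}\to\ast$) and concluding with $(\mathrm{B1})$; and the backward direction is verified generator by generator, with the reduction to $(A,E_A)=J^\sharp$ making Proposition \ref{cartiso} applicable for $(\mathrm{A2})$ and Proposition \ref{cancelcomp} handling $(\mathrm{B1})$ and the prism fillings for $(\mathrm{B2})$.

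The one place you genuinely diverge is the implication ``marked $\Rightarrow$ $p$-Cartesian''. The paper disposes of it in one sentence, asserting that the right lifting property against $(\mathrm{B2})$ for $n\geq 1$ yields the criterion of Proposition \ref{cartedge}(3); you correctly observe that this does not match up literally, since $(\mathrm{B2})$ only produces lifts when \emph{every} vertical edge over $\partial\Delta^n$ is marked, whereas the criterion demands lifts with the non-terminal vertical edges arbitrary. Your proposed repair --- showing that the marked outer horn inclusions $(\Lambda^n_n,\mathcal E)\to(\Delta^n,\mathcal E')$ lie in $\mathcal A$ and invoking Proposition \ref{cartedge}(2) instead --- is the right target statement, but it is exactly where the work is, and your proposal leaves it as an unproved filtration claim. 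Be warned that it does not follow from Lemma \ref{productclosure} together with the standard unmarked retract trick: although $(\1\to\mcyl)\boxtimes\bigl((\Lambda^n_n,\mathcal E)\to(\Delta^n,\mathcal E')\bigr)$ does lie in $\mathcal A$ by that lemma, the usual retraction $r\colon \Delta^1\times\Delta^n\to\Delta^n$ with $r(0,j)=j$ and $r(1,j)=n$ sends the marked edge $\Delta^1\times\{j\}$ to the unmarked edge $j\to n$ for $j<n-1$, so $r$ is not a map of marked simplicial sets and the retract presentation breaks down. A genuinely new cell-by-cell argument (and not a circular appeal to $\mathcal A=l(r(\mathcal A))$, since membership of the marked outer horns in that class is essentially equivalent to the statement being proved) is required here. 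Until that lemma is supplied, this step --- in your write-up, and arguably in the paper's as well --- remains the open gap.
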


\begin{proof}
We first show the `only if' direction. Thus suppose the map $p\colon(X,E_X)\to (A,E_A)$ is a marked right fibration. Hence it satisfies the right lifting property with respect to the generators of Definition \ref{markedgenerators}. The right lifting property with respect to (A1) implies that $p$ is an inner fibration. The right lifting property with respect to (B2) for $n=0$ implies that over each marked edge of the form $\overline f \colon x \to p(y)\in E_A$ there exists a marked edge $f\in E_X$ such that $p(f)=\overline f$. Moreover, the right lifting property with respect to (B2) for $n\geq 1$ shows that every marked edge is $p$-Cartesian by Proposition \ref{cartedge}. It remains to show that an edge is marked only if it is $p$-Cartesian and its image is marked.

Suppose we have an edge $f\colon x\to y$ such that that $p(f)$ is marked and $f$ is $p$-Cartesian. We have already seen that there exists a marked edge $f'\colon x'\to y$ such that $p(f')=p(f)$ and $f'$ is $p$-Cartesian. In particular, we find a 2-simplex in $X$ of the form
\[
\begin{tikzcd}
{} & x'\dar{f'}\\
x\rar[swap]{f}\urar{\alpha} & y.
\end{tikzcd}
\]
Since $f$ is also $p$-Cartesian, it follows by the right cancellation property of Cartesian edges, Lemma \ref{cancelcomp}, that $\alpha$ is $p$-Cartesian. In particular, the edge $\alpha$ defines an equivalence in the fiber $X_{p(x)}$, which is an $\infty$-category. Consider the maximal $\infty$-groupoid $k(X_{p(x)})$. Since the map
\[
X_{p(x)}\to \ast 
\] 
is a marked right fibration, it has the right lifting property with respect to the map
\[
k(X_{p(x)})^\flat \to k(X_{p(x)})^\sharp, 
\]
which implies that every equivalence of $k(X_{p(x)})$ and thus in particular $\alpha$ is marked. By the right lifting property with respect to (B1), it follows that $f$ is also marked.

Now assume that $p\colon (X,E_X)\to (A,E_A)$ satisfies the assumptions of the proposition. We show that $p$ is a marked right fibration. Thus we need to show the right lifting property against the generators of Definition \ref{markedgenerators}. The right lifting property against (A1) follows since $p$ is an inner fibration. 

To show the right lifting property against (A2) it suffices to consider the case where $(A,E_A)= J^\sharp$. In this case $p$ is an inner fibration over a Kan complex, hence the $p$-Cartesian edges are precisely the equivalences by Lemma \ref{cartiso}, thus $p$ has the right lifting property against (A2).

The right lifting property property against (B1) follows immediately from assumptions (2) and (3) and the right lifting property against (B2) follows since $p$-Cartesian edges satisfy the right cancellation property, by Lemma \ref{cancelcomp}.
\end{proof}

\begin{coro}\label{cocartesianfibrant}
Let $X\to A$ be an inner fibration of simplicial sets. Then it is a Cartesian fibration if and only if the map $X^\natural \to A^\sharp$ is a marked right fibration.
\end{coro}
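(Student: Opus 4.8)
The plan is to deduce this directly from the characterization of marked right fibrations established in the Proposition immediately above, applied to the morphism $p\colon X^\natural \to A^\sharp$. Here the marking on the source consists precisely of the $p$-Cartesian edges, by definition of $X^\natural$, while the marking on the target consists of all edges, since $A^\sharp$ carries the maximal marking (and note that $p$ is automatically a map of marked simplicial sets, as every edge of $A^\sharp$ is marked). Thus I would simply match up the three conditions of that Proposition against the definition of a Cartesian fibration.

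First I would observe that condition (1), that the underlying map of simplicial sets be an inner fibration, is exactly our standing hypothesis on $p$. Next, since every edge of $A^\sharp$ is marked, the requirement $p(f)\in E_A$ in condition (3) is automatic, and as the marked edges of $X^\natural$ are by definition exactly the $p$-Cartesian ones, condition (3) degenerates to the tautology that an edge is $p$-Cartesian if and only if it is $p$-Cartesian; hence it holds for free. The only condition carrying genuine content is therefore (2).

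The heart of the argument is then to recognize that condition (2), specialized to the maximal marking on $A$, reads precisely as: for every vertex $y\in X$ and every edge $\overline f\colon x\to p(y)$ of $A$, there is a $p$-Cartesian edge $f$ of $X$ with target $y$ lifting $\overline f$. This is word for word the lifting problem in the definition of a Cartesian fibration, in which the inclusion $\Delta^{\{1\}}\hookrightarrow \Delta^1$ prescribes the target $y$ and the solution is required to be $p$-Cartesian. Consequently $X^\natural \to A^\sharp$ satisfies condition (2) if and only if $p$ is a Cartesian fibration, and combined with the automatic validity of (1) and (3) this yields the claimed equivalence in both directions.

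I do not anticipate a real obstacle, as the corollary is essentially a dictionary translation between the two sets of axioms; the only point demanding care is verifying that condition (2) genuinely fixes the target of the lifted edge to be $y$, rather than merely producing some $p$-Cartesian lift of $\overline f$ somewhere in $X$. This is guaranteed by the generator $\1\times(\Delta^0)^\flat \to \mcyl \times (\Delta^0)^\flat$ underlying condition (2) in the proof of the Proposition, whose attaching vertex is the endpoint $1$, so that the lifting diagram coincides exactly with the Cartesian one.
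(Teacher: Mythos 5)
Your proof is correct and is exactly the argument the paper intends (the corollary is stated without proof as an immediate consequence of the preceding Proposition): conditions (1) and (3) are automatic for $X^\natural\to A^\sharp$, and condition (2) — including the point that the lift's target is pinned to $y$ by the generator $\1\times(\Delta^0)^\flat\to\mcyl\times(\Delta^0)^\flat$ — is precisely the definition of a Cartesian fibration. Nothing further is needed.
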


\begin{coro}\label{absolutefibrant}
A marked simplicial set $(X,E_X)$ is marked right fibrant if and only if $X$ is an $\infty$-category and $E_X$ is precisely the set of equivalences in $X$.
\end{coro}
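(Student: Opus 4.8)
The plan is to obtain this as an immediate specialization of the preceding Proposition characterizing marked right fibrations, applied to the structure map to the terminal object. Recall that the terminal object of $\mathbf{sSet}^+$ is $(\Delta^0)^\sharp = (\Delta^0)^\flat$, whose unique (degenerate) edge is marked. By definition $(X,E_X)$ is marked right fibrant exactly when the canonical map $p\colon (X,E_X)\to (\Delta^0)^\sharp$ is a marked right fibration, so I would simply unwind conditions (1)--(3) of that Proposition in this case and read off the two claimed statements.

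First I would dispatch condition (1): the underlying map $X\to \Delta^0$ is an inner fibration if and only if $X$ has the extension property against all inner horns, which is precisely the condition that $X$ be an $\infty$-category. This gives the first half of the assertion. Next, condition (2) is vacuous here. Since the only edge of $\Delta^0$ is degenerate, the only marked $\overline f$ in the base is $s_0(\ast)$, and for every vertex $y\in X$ the degenerate edge $s_0(y)$ — marked by the very definition of a marked simplicial set — projects onto it; so the required lift always exists and (2) imposes no constraint.

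The substance lies in condition (3), which I expect to be the main point. Over the terminal object every edge $p(f)$ is degenerate, hence automatically lies in $E_A$, so (3) collapses to the statement that an edge $f$ of $X$ is marked if and only if it is $p$-Cartesian. To identify these edges I would invoke Proposition \ref{cartiso}: condition (1) already supplies that $X$ is an $\infty$-category, $\Delta^0$ is trivially one, and $p(f)$ is always an equivalence in $\Delta^0$; therefore $f$ is $p$-Cartesian precisely when $f$ is an equivalence in $X$. Consequently (3) says exactly that $E_X$ is the set of equivalences of $X$, which is the second half of the assertion.

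Combining the three conditions then yields the corollary. The only mild subtlety worth flagging is that the use of Proposition \ref{cartiso} presupposes $X$ to be an $\infty$-category, and this is exactly what condition (1) provides; thus the two clauses of the statement are correctly entangled, the $\infty$-category hypothesis being what makes the identification of $p$-Cartesian edges with equivalences legitimate. No genuine obstacle arises beyond this careful bookkeeping over the point.
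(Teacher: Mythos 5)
Your proposal is correct and is exactly the intended argument: the paper states this as an immediate consequence of the preceding characterization of marked right fibrations, specialized to the structure map to the terminal object, with Proposition \ref{cartiso} identifying the $p$-Cartesian edges over a point with the equivalences. Your careful note that condition (1) must be established first so that Proposition \ref{cartiso} applies is the right piece of bookkeeping, and nothing further is needed.
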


\begin{sub}
Having established the (co)Cartesian model structure for marked simplicial sets, we give a new proof that coCartesian fibrations are proper (see \cite[Proposition 4.1.2.5]{lurie} for an alternative proof). As preparation, we relate marked right anodyne extensions and right anodyne extensions.
\end{sub}

\begin{lem}\label{underlyingmarked}
Let $(K,E_K)\to (L,E_L)$ be a marked right anodyne extension. Then the underlying map of simplicial sets is a right anodyne extension.
\end{lem}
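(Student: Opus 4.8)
The plan is to exploit that the forgetful functor $U\colon \mathbf{sSet}^+\to \mathbf{sSet}$ is a left adjoint: it has the right adjoint $(\cdot)^\sharp$, hence preserves all small colimits. Consequently the class
\[
\mathcal B := \set{ f \in \mathbf{sSet}^+ \mid U(f) \text{ is a right anodyne extension} }
\]
is saturated. Indeed, since $U$ commutes with pushouts and transfinite compositions, trivially preserves retracts, and since the right anodyne extensions of simplicial sets form a saturated class, $\mathcal B$ is closed under all three operations. By the characterization of the marked right anodyne extensions as the smallest saturated class $\mathcal A$ generated by the morphisms (A1)--(B2) of Definition \ref{markedgenerators}, it therefore suffices to verify that $U$ carries each of these four generators to a right anodyne extension; the whole statement then follows because $\mathcal B$ is saturated and contains the generators.

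For (A1), the underlying map of $(\Lambda^n_k)^\flat \to (\Delta^n)^\flat$ is the inner horn inclusion $\Lambda^n_k \to \Delta^n$ with $0<k<n$, which is in particular right anodyne. For (A2), the map $J^\flat \to J^\sharp$ only alters the marking, so $U$ sends it to the identity of $J$, which is vacuously right anodyne.

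The two remaining generators are the pushout--products $\partial_1\boxtimes i$ of Notation \ref{boxproduct} for the cylinder $\mcyl \times (\cdot)$, and here I would simply read off the underlying simplicial sets, using that $\mcyl$, $(\Delta^1)^\flat$, $(\partial\Delta^n)^\flat$ and $(\Delta^n)^\flat$ have underlying objects $\Delta^1$, $\Delta^1$, $\partial\Delta^n$ and $\Delta^n$. For (B1) the underlying map becomes $\Delta^1\times\Delta^1 \cup \1\times\Delta^1 \to \Delta^1\times\Delta^1$, which is an isomorphism. For (B2) the underlying map is exactly
\[
\Delta^1 \times \partial\Delta^n \cup \1 \times \Delta^n \to \Delta^1 \times \Delta^n,
\]
which is precisely a generator of the right anodyne extensions, by the Lemma comparing the two generating sets for right anodyne extensions in the previous subsection.

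There is no genuine obstacle: each generator either collapses to an isomorphism of underlying simplicial sets, to an inner horn inclusion, or to the standard right anodyne generator. The only point requiring a little care is the first paragraph, namely that although $\mathcal B$ is defined via the forgetful functor, it inherits saturation directly from the colimit--preservation of $U$, so that checking the finitely many generators genuinely suffices.
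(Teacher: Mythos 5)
Your proof is correct, but it takes a genuinely different route from the paper's. The paper works on the fibration side: given a right fibration $X\to A$ of simplicial sets, the map $X^\sharp\to A^\sharp$ is a marked right fibration (Corollary \ref{cocartesianfibrant}, as every edge of $X$ is Cartesian over $A$), so any lifting problem of $K\to L$ against $X\to A$ transposes to a lifting problem of $(K,E_K)\to(L,E_L)$ against $X^\sharp\to A^\sharp$, which is solvable by hypothesis; hence $K\to L$ has the left lifting property against all right fibrations and is therefore right anodyne. You instead work on the anodyne side: since the forgetful functor is a left adjoint it preserves pushouts, transfinite compositions and retracts, so the preimage of the right anodyne extensions is saturated, and it suffices to check the four generators of Definition \ref{markedgenerators}. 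Your checks are all accurate: (A1) has underlying map an inner horn inclusion $\Lambda^n_k\to\Delta^n$ with $0<k<n$, hence right anodyne; (A2) and (B1) have underlying maps which are isomorphisms; and (B2) has underlying map exactly the generator $\Delta^1\times\partial\Delta^n\cup\1\times\Delta^n\to\Delta^1\times\Delta^n$. The trade-off is which auxiliary result each argument invokes: yours needs the identification of the saturated class $\mathcal A$ with the marked right anodyne extensions (so that checking generators suffices), whereas the paper's needs the characterization of marked right fibrations leading to Corollary \ref{cocartesianfibrant}; both are available at this point in the text, so either proof is complete.
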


\begin{proof}
Suppose $X\to A$ is a right fibration of simplicial sets and consider a lifting problem
\[
\begin{tikzcd}
K\rar \dar & X\dar \\
L \rar\urar[dashed] & A
\end{tikzcd}
\]
Since $X\to A$ is a right fibration, the map $X^\sharp \to A^\sharp$ is a marked right fibration by Corollary \ref{cocartesianfibrant} and we obtain a lifting problem
\[
\begin{tikzcd}
(K,E_K)\rar \dar & X^\sharp\dar \\
(L,E_L) \rar\urar[dashed] & A^\sharp
\end{tikzcd}
\]
which admits a solution since $(K,E_K)\to (L,E_L)$ was assumed to be marked right anodyne, thus providing a solution to the original lifting problem.
\end{proof}

\begin{lem}\label{markedright}
Let $K\to L$ be a right anodyne extension of simplicial sets. Then $K^\sharp \to L^\sharp$ is a marked right anodyne extension.
\end{lem}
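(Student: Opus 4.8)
The plan is to exploit that the functor $(\cdot)^\sharp$ is a left adjoint, namely the left adjoint of $\mu$, and hence preserves all small colimits as well as retracts. Since the right anodyne extensions of simplicial sets form the saturated class generated by the horn inclusions $\Lambda^n_k \to \Delta^n$ with $0<k\leq n$, applying $(\cdot)^\sharp$ shows that $K^\sharp \to L^\sharp$ lies in the saturated class generated by the maps $(\Lambda^n_k)^\sharp \to (\Delta^n)^\sharp$. As the marked right anodyne extensions themselves form a saturated class, it therefore suffices to prove that each generator $(\Lambda^n_k)^\sharp \to (\Delta^n)^\sharp$ with $0<k\leq n$ is marked right anodyne.

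I would verify this by testing the left lifting property against an arbitrary marked right fibration $p\colon (X,E_X)\to (A,E_A)$, using the characterization of such maps obtained above: $p$ has underlying inner fibration, and an edge of $X$ is marked precisely when it is $p$-Cartesian and lies over a marked edge of $A$. The crucial observation is that, since every $1$-simplex of $(\Lambda^n_k)^\sharp$ is marked, the upper map $g\colon (\Lambda^n_k)^\sharp \to (X,E_X)$ of any lifting problem carries every edge of $\Lambda^n_k$ to a marked, hence $p$-Cartesian, edge of $X$. The case $n=1$ is immediate, since $\{1\}^\sharp \to \mcyl$ is literally the generator (B2) with $n=0$ and so is marked right anodyne by definition.

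For $n\geq 2$ I would produce the underlying simplicial filler $\Delta^n \to X$ first. For an inner horn $0<k<n$ this exists because $p$ is an inner fibration; for the last horn $k=n$ the edge $\Delta^{\{n-1,n\}}$ maps under $g$ to a $p$-Cartesian edge, so Proposition \ref{cartedge}(2) supplies the filler. It then remains to check that this filler is a map of marked simplicial sets. A direct combinatorial check shows that for $n\geq 3$ every $1$-simplex of $\Delta^n$ already lies in $\Lambda^n_k$, so the filler introduces no new edges and automatically respects the marking. The only case producing a new $1$-simplex is $n=2$ (the composite edge of the $2$-simplex); here its image under $p$ is marked because the bottom map lands in $(\Delta^2)^\sharp$, and it is $p$-Cartesian by the right-cancellation property of Cartesian edges, Proposition \ref{cancelcomp}, applied to the filling $2$-simplex whose two remaining edges are $p$-Cartesian. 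Hence the new edge is marked and the filler solves the marked lifting problem.

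The main obstacle is the last horn $k=n$ together with the marking of the $n=2$ composite: the underlying inner fibration structure alone does not yield a filler for the outer horn, so one genuinely needs the $p$-Cartesian hypothesis to invoke Proposition \ref{cartedge}, and one needs the composition/cancellation behaviour of Cartesian edges (Proposition \ref{cancelcomp}) to control the markings. This is exactly where the marking is essential and is what forces the argument to pass through the characterization of marked right fibrations rather than a purely combinatorial manipulation of the generators.
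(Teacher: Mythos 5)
Your proof is correct, but it takes a genuinely different route from the paper's. The paper does not reduce to the horn generators at all: it uses the other generating set for right anodyne extensions, namely the pushout\-/products $\Delta^1\times\partial\Delta^n\cup\{1\}\times\Delta^n\to\Delta^1\times\Delta^n$, notes that $(\cdot)^\sharp$ preserves colimits and products, and observes that the image $\mcyl\times(\partial\Delta^n)^\sharp\cup\1\times(\Delta^n)^\sharp\to\mcyl\times(\Delta^n)^\sharp$ is of the form $\partial_1\boxtimes i$ for a monomorphism $i$ of marked simplicial sets, hence is marked right anodyne by Lemma \ref{productclosure} --- essentially by the defining closure axiom for a class of right $I$-anodyne extensions. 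That is a two-line structural argument needing no input about Cartesian edges. Your argument instead reduces to the generators $(\Lambda^n_k)^\sharp\to(\Delta^n)^\sharp$ and verifies the left lifting property against marked right fibrations by hand, which forces you to invoke the characterization of marked right fibrations together with Propositions \ref{cartedge} and \ref{cancelcomp}. This is legitimate (a map is marked right anodyne if and only if it lifts against all marked right fibrations, since the class has the form $l(r(\Lambda))$), and it has the side benefit of making explicit why sharpened right horns lift against Cartesian fibrations, but it is considerably heavier and imports Lurie's combinatorics where the paper needs none. One small slip: for $\Lambda^2_2$ the edge not contained in the horn is $\Delta^{\{0,1\}}$, not the composite $\Delta^{\{0,2\}}$; your appeal to the cancellation property of Proposition \ref{cancelcomp} covers this case all the same, since the other two edges of the filling $2$-simplex are $p$-Cartesian.
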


\begin{proof}
Since $(\cdot)^\sharp$ commutes with colimits, it suffices to show that the image of the generators of right anodyne extensions are marked right anodyne. Thus we need to show that
\[
\mcyl \times (\partial \Delta^n)^\sharp \cup \1 \times (\Delta^n)^\sharp \to \mcyl \times (\Delta^n)^\sharp 
\]
is marked right anodyne, which follows from Lemma \ref{productclosure}, since $(\partial \Delta^n)^\sharp \to (\Delta^n)^\sharp$ is a monomorphism.
\end{proof}

\begin{defn}
A marked right anodyne extension is called \emph{cellular} if it is in the smallest saturated class generated by
\[
\mcyl \times (K,E_K)\cup \1 \times (L,E_L)\to \mcyl \times (L,E_L)
\]
where $(K,E_K)\to (L,E_L)$ is a monomorphism.
\end{defn}

\begin{Remark}
In other words, a marked right anodyne extension is cellular if and only if it is in the smallest class of right anodyne extensions which contains the classes (B1) and (B2) of Remark \ref{markedgenerators}.
\end{Remark}

\begin{thm}\label{properness}
Consider a pullback square of marked simplicial sets
\[
\begin{tikzcd}
(Y,E_Y) \rar{j} \dar & (X,E_X) \dar{p}\\
(K,E_K) \rar[swap]{i} & (L,E_L)
\end{tikzcd}
\]
where $p$ is a marked left fibration and $i\colon (K,E_K)\to (L,E_L)$ is a cellular right anodyne extension. Then $j\colon (Y,E_Y) \to (X,E_X)$ is marked right anodyne.
\end{thm}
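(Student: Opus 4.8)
The plan is to follow the strategy of Proposition \ref{properleft}, replacing its simplicial input by the marked analogue. Let $\mathcal P$ denote the class of morphisms $i$ of $\mathbf{sSet}^+$ with the property that in any pullback square of the shape displayed in the theorem, with $p$ a marked left fibration, the induced map $j$ is marked right anodyne. The assertion of the theorem is precisely that every cellular right anodyne extension lies in $\mathcal P$, so by the very definition of \emph{cellular} it suffices to prove that $\mathcal P$ is saturated and contains the generating maps
\[
g_m \colon \mcyl \times (K,E_K) \cup \1 \times (L,E_L) \to \mcyl \times (L,E_L)
\]
attached to an arbitrary monomorphism $m \colon (K,E_K) \to (L,E_L)$.

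First I would record the closure properties of $\mathcal P$. Since colimits in $\mathbf{sSet}^+$ are universal, pulling a pushout (or a transfinite composite, or a retract diagram) of maps in $\mathcal P$ back along a marked left fibration again yields a pushout (transfinite composite, retract) whose relevant leg is the corresponding pullback; as the marked right anodyne extensions form a saturated class, $\mathcal P$ is saturated. I also claim that $\mathcal P$ satisfies right cancellation among monomorphisms. Given $f$ and $gf$ in $\mathcal P$ with $g$ a monomorphism, pulling a marked left fibration back over the common target and using that left fibrations are stable under base change exhibits the pullback of $g$ as the right factor in a composite $\bullet \to \bullet \to \bullet$ whose first map (the pullback of $f$) and whose composite (the pullback of $gf$) are marked right anodyne; this right factor is itself a monomorphism, so by Lemma \ref{finalfibrations} it is $\mathcal I^+$-final precisely when it is marked right anodyne, and right cancellation for $\mathcal I^+$-final maps then forces it into the class. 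This cancellation step, and in particular the passage through $\mathcal I^+$-finality that legitimizes cancelling marked right anodyne extensions, is the part requiring the most care.

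Finally I would verify the generators. For any marked simplicial set $(M,E_M)$ the endpoint inclusion $\1 \times (M,E_M) \to \mcyl \times (M,E_M)$ is a right deformation retract, the contraction being induced by $\max \colon \mcyl \times \mcyl \to \mcyl$ (which preserves markings, as every edge of $\mcyl$ is marked); hence by Proposition \ref{leftright} its pullback along any marked left fibration is again a right deformation retract, and so marked right anodyne by Lemma \ref{deformright}. Thus every such endpoint inclusion lies in $\mathcal P$. Writing $P := \mcyl \times (K,E_K) \cup \1 \times (L,E_L)$, the map $b \colon \1 \times (L,E_L) \to P$ is the pushout of the endpoint inclusion $\1 \times (K,E_K) \to \mcyl \times (K,E_K)$ along $\1 \times m$, so $b \in \mathcal P$ by saturation, while the composite $g_m \circ b$ is the endpoint inclusion $\1 \times (L,E_L) \to \mcyl \times (L,E_L)$, which also lies in $\mathcal P$. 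Since $g_m$ is a monomorphism, right cancellation gives $g_m \in \mathcal P$, completing the proof.
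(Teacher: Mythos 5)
Your proposal is correct and takes essentially the same route as the paper's proof: reduce, via saturation and the right cancellation property of the class in question, to the endpoint inclusions $\1\times(L,E_L)\to\mcyl\times(L,E_L)$, which are right deformation retracts and therefore pull back along marked left fibrations to right deformation retracts by Proposition \ref{leftright}, hence to marked right anodyne extensions by Lemma \ref{deformright}. The only difference is that you spell out the saturation argument (universality of colimits), the cancellation argument (via $\mathcal I^+$-finality and Lemma \ref{finalfibrations}), and the pushout decomposition justifying the reduction to endpoint inclusions, all of which the paper leaves implicit.
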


\begin{proof}
The proof is analogous to the proof for left fibrations of simplicial sets, see Proposition \ref{properleft}. The class of morphisms $(K,E_K)\to (L,E_L)$, for which the conclusion holds is saturated and satisfies the right cancellation property. Thus it suffices to show the assertion for pullback squares of the form
\[
\begin{tikzcd}
(Y,E_Y) \rar{j} \dar & (X,E_X) \dar{p}\\
\1\times (L,E_L) \rar[swap]{i} &\mcyl \times (L,E_L).
\end{tikzcd}
\]
We observe that in this case $i$ is a right deformation retract, hence by Proposition \ref{leftright} the map $j$ is a right deformation retract and thus by Lemma \ref{deformright} is a marked right anodyne extension.
\end{proof}

\begin{coro}\label{markedproperleft}
Any coCartesian fibration of simplicial sets is proper and any Cartesian fibration is smooth with respect to the elementary homotopical datum $(\Delta^1,\emptyset)$ on simplicial sets. 
\end{coro}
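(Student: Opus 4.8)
The plan is to deduce both assertions from the marked properness result Theorem \ref{properness}, reducing the unmarked simplicial setting to the marked one in the same spirit that Proposition \ref{properleft} reduced left fibrations to right deformation retracts. I will treat only the claim that a coCartesian fibration $p\colon X\to Y$ is proper with respect to $(\Delta^1,\emptyset)$; the statement that Cartesian fibrations are smooth is obtained by interchanging the roles of left and right throughout.

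First I would carry out the formal reduction. Since coCartesian fibrations are stable under pullback, it suffices by the definition of properness to treat a single cartesian square with a coCartesian fibration $p\colon X\to Y$ and a final map $v\colon A\to Y$ along the bottom, and to show that the pullback $w\colon A'\to X$ is final. By Proposition \ref{leftcancel} every final map factors as a right anodyne extension followed by a trivial fibration; as trivial fibrations are stable under pullback and are themselves final, it is enough to prove that the pullback $w$ of a right anodyne map $v$ along the coCartesian fibration $p$ is again right anodyne.

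The essential step is to transport this last statement into marked simplicial sets so that Theorem \ref{properness} applies. By (the dual of) Corollary \ref{cocartesianfibrant} the map $p^\natural\colon X^\natural\to Y^\sharp$, with the $p$-coCartesian edges marked, is a marked left fibration. Inspecting the proof of Lemma \ref{markedright}, the image $v^\sharp\colon A^\sharp\to Y^\sharp$ of a right anodyne map is built out of the generators $\mcyl\times(\partial\Delta^n)^\sharp\cup\1\times(\Delta^n)^\sharp\to\mcyl\times(\Delta^n)^\sharp$, which are exactly of cellular shape, so that $v^\sharp$ is a cellular right anodyne extension. Moreover, because coCartesian edges are stable under pullback, forming the pullback of $p^\natural$ along $v^\sharp$ in marked simplicial sets yields the marked simplicial set with underlying object $A'=A\times_Y X$ and the $p'$-coCartesian edges marked, that is, $(A')^\natural\to X^\natural$ for the pulled-back coCartesian fibration $p'\colon A'\to A$. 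Theorem \ref{properness} then shows that $(A')^\natural\to X^\natural$ is marked right anodyne, and Lemma \ref{underlyingmarked} shows that its underlying map $w\colon A'\to X$ is right anodyne, completing the reduction.

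The main obstacle I anticipate is the correct identification of the marked pullback: one must verify that pulling $p^\natural\colon X^\natural\to Y^\sharp$ back along $v^\sharp$ recovers precisely the canonical marking $(A')^\natural$ of the pulled-back coCartesian fibration, which relies on the stability of coCartesian edges under pullback, together with the bookkeeping that $v^\sharp$ indeed lands in the cellular class demanded by Theorem \ref{properness}. Once these two points are secured, the remainder is the purely formal reduction inherited from Proposition \ref{properleft}, and the Cartesian case follows by the evident dualization.
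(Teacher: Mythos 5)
Your proof is correct and takes essentially the same route as the paper's: the paper's own (much terser) argument likewise reduces to pulling back a cellular marked right anodyne map $K^\sharp\to L^\sharp$ along the marked left fibration $X^\natural\to L^\sharp$, applies Theorem \ref{properness}, and concludes with Lemma \ref{underlyingmarked}. The details you supply — the reduction of finality to right anodyne maps via Proposition \ref{leftcancel}, the cellularity of $v^\sharp$, and the identification of the marked pullback with $(A')^\natural$ — are exactly the steps the paper leaves implicit.
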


\begin{proof}
Translating into marked simplicial sets, we consider a pullback diagram
\[
\begin{tikzcd}
Y^\natural \rar{j}\dar[swap]{q} & X^\natural \dar{p}\\
K^\sharp \rar{i} & L^\sharp
\end{tikzcd}
\]
where $p$ and $q$ are marked left fibrations and $K^\sharp \to L^\sharp$ is cellular marked right anodyne. It follows from the previous theorem that $j$ is marked right anodyne, hence by Lemma \ref{underlyingmarked} the map of simplicial sets $Y\to X$ is right anodyne.
\end{proof}

\begin{Remark}
One can in fact show that the coCartesian fibrations are proper with respect to the elementary homotopical datum $\mathcal I^+$ on marked simplicial sets, however the proof is much more involved. By inspection of the generators for marked right anodyne extensions, the problem is to show that pulling back the maps
\[
(\Lambda^n_k)^\flat \to (\Delta^n)^\flat
\]
along a coCartesian fibration will result in a marked right anodyne extension. A proof of properness is shown in \cite[Appendix B.3]{lurieha}. 
\end{Remark}

\bibliographystyle{alpha}
\bibliography{thesis}

\end{document}